\documentclass[11pt, twoside]{article}
\usepackage{amsfonts,amssymb,amsmath,amsthm}
\usepackage{graphicx}
\usepackage[all]{xy}
\usepackage{multirow} % Required for multirows
\usepackage{psfrag,xmpmulti,amscd,color,pstricks, import}
\usepackage{caption}

\setlength{\paperwidth}{210mm} 
\setlength{\paperheight}{297mm}
\setlength{\textheight}{215mm} 
\setlength{\textwidth}{155mm}
\setlength\oddsidemargin {\paperwidth}
\addtolength\oddsidemargin{-\textwidth} \divide\oddsidemargin by 2
\addtolength\oddsidemargin{-1in}
\addtolength\oddsidemargin{-\hoffset} \setlength\evensidemargin
{\oddsidemargin}
\setlength{\topmargin}{-0.5cm}
\setlength{\parskip}{0.1cm}
\baselineskip=24pt 

%%Teoremi macros

\newtheorem{thm}{Theorem}[section]
\newtheorem{cor}[thm]{Corollary}
\newtheorem{lem}[thm]{Lemma}
\newtheorem{prop}[thm]{Proposition}

\newtheorem{lemma}[thm]{Lemma}

\theoremstyle{definition}
\newtheorem{defn}[thm]{Definition}
\newtheorem{rem}[thm]{Remark}
\newtheorem*{rem*}{Remark}

\numberwithin{equation}{section}

%SUBPROOFS

%\theoremstyle{claimstyle}
%\newtheorem*{Claim}{Claim}

%

%\newcommand{\blue}{\color{blue}}
%\newcommand{\red}{\color{red}}
\definecolor{OrangeRed}{cmyk}{0,0.6,1,0}            % half magenta only, full yellow
\definecolor{DarkBlue}{cmyk}{1,1,0,0.20}
\definecolor{DarkGreen}{cmyk}{1,0,0.6,0.2}
\definecolor{myblue}{rgb}{0.66,0.78,1.00}
\definecolor{Violet}{cmyk}{0.79,0.88,0,0}
\definecolor{Lavender}{cmyk}{0,0.48,0,0}

\renewcommand{\Re}{\operatorname{Re}}
\newcommand{\im}{\operatorname{Im}}
\newcommand{\re}{\operatorname{Re}}

\newcommand{\dist}{\operatorname{dist}}

\renewcommand{\AA}{{\cal A}}

\newcommand{\MM}{{\cal M}}

\newcommand{\C}{{\mathbb C}}

\newcommand{\D}{{\mathbb D}}
\renewcommand{\H}{\mathbb{H}}

\newcommand{\N}{{\mathbb N}}

\renewcommand{\P}{{\mathbb P}}
\newcommand{\R}{{\mathbb R}}

\newcommand{\Z}{{\mathbb Z}}

\newcommand{\ra}{\rightarrow}
\newcommand{\ov}{\overline}

\renewcommand{\epsilon}{\varepsilon}
\renewcommand{\phi}{\varphi}

\title{Automorphisms of $\C^2$ with cycles of escaping Fatou components with hyperbolic limit sets }
 
\vspace{5cm}
\author{Veronica Beltrami\footnote{This work was partially supported by the Indam groups GNAMPA and GNSAGA and by PRIN 2022 Real and Complex Manifolds: Geometry and Holomorphic Dynamics}}

\begin{document}

\maketitle  
\begin{abstract}
We study the stable dynamics of non-polynomial automorphisms of $\C^2$ of the form $F(z,w)=(e^{-z^m}+ \delta e^{\frac{2 \pi}{m}i}\, w\,,\,z)$, with $m\ge 2$ a natural number and $\R\ni\delta>2$.

If $m$ is even, there are $\frac{m}{2}$ cycles of escaping Fatou components, all of period $2m$. If $m$ is odd there are $\frac{m-1}{2}$ cycles of escaping Fatou components of period $2m$ and just one cycle of escaping Fatou components of period $m$.

These maps have two distinct limit functions on each cycle, both of which have generic rank 1. Each Fatou component in each cycle has two disjoint and hyperbolic limit sets on the line at infinity, except for the Fatou components that belong to the unique cycle of period $m$: the latter in fact have the same hyperbolic limit set on the line at infinity.

\end{abstract}

\section{Introduction}
%The knowledge of one-dimensional complex dynamics is much more advanced than what is known about dynamics in higher complex dimensions, where the construction of examples is an active area of the current research. 
We consider the evolution of $\C^2$ under the iteration of non-polynomial automorphisms of $\C^2$, which are non-polynomial holomorphic maps $F:\C^2 \longrightarrow \C^2$ injective and surjective. We denote the $n$-th iteration of $F$ by $F^n$, i.e. $F$ composed with itself $n$ times.

Following \cite{henon1}, we call a family of holomorphic functions on a domain $\Omega\subset \C^2$ to $\C^2$  \textit{normal} if every sequence has a subsequence which converges uniformly on compact subsets to a holomorphic function from $\Omega$ to $\P^2$, where $\P^2$ is the compactification of $\C^2$ with the line at infinity $\ell_\infty$. The \textit{Fatou set} of $F$ is the set of points of $\C^2$ that have a neighborhood $U$ such that $\{F^n_{|_U} \}_{n\in \N}$ forms a normal family and a \textit{Fatou component} is a connected component of the Fatou set. %Moreover a set $\Omega$ is said to be \textit{invariant} under $F$ if $F(\Omega)$ is contained in $\Omega$.

Given a Fatou component $\Omega$ for $F$, we define a \textit{limit function} for $\Omega$ as a holomorphic function $h:\Omega \longrightarrow \P^2$ such that there exists a subsequence $n_j$ such that $F^{n_j}\rightarrow h$ uniformly on compact subsets of $\Omega$. The image of $\Omega$ under $h$ is called \textit{limit set} of $\Omega$ and we denote it by $h(\Omega)$. %In fact, for Hénon maps, it turns out that this happens for every orbit of $\Omega$.
Furthermore we define the \textit{rank} of a limit function $h$ as the maximal rank of its differential.

A known result (see Lemma 2.4 of \cite{henon1}) concerning limit sets asserts that if $h:\Omega\longrightarrow \P^2$ is a limit function for $\Omega$ and $h(\Omega)\cap \ell_\infty \ne \emptyset$ than $h(\Omega) \subset \ell_\infty$.  %If $h(\Omega) \subset \C^2$ we say that it is a $\C^2$-limit set, If $h(\Omega) \subset \ell_\infty$ we say that it is a $\ell_\infty$-limit set, similarly for $h$.

In this paper we consider \textit{escaping} Fatou components: a Fatou component $\Omega$ is called escaping if $h(\Omega)\subset \ell_\infty$, that is $\Omega$ has points whose orbits converge to $\ell_\infty$. In a certain sense they can be seen as the analogos of Baker domains in one-dimensional transcendental dynamics.
 
 %If we consider trascendental functions from $\C$ to $\C$ we obtain what in \cite{EL92} are first called \textit{Baker domain}. A Baker domain is a periodic Fatou component whose points have orbits that converge locally uniformly to the point $\infty$, that is an essential singularity for $f$ and such Fatou components can not occur if $f$ is a polynomial. A two-dimensional analogue of the Baker domains are the escaping Fatou components: an \textit{escaping Fatou component} $\Omega$ for $F$ is a connected component of the Fatou set whose points have orbits that converge to $\ell_\infty$.

We are interested in a special subclass of non-polynomial automorphisms of $\C^2$, the subclass of transcendental Hénon maps, introduced for the first time in \cite{Dujardin04}. General proprerties of transcendental Hénon maps were studied in \cite{henon1}, \cite{henon2} and \cite{henon3}.
A Transcendental Hénon map has the form
\begin{equation}\label{eq:trans hen}
    F(z,w) = (f(z)+aw, z) 
\end{equation}
where $f: \C \longrightarrow \C$ is an entire transcendental function and $a\ne 0$ is a complex constant (note that $a$ is the modulus of the determinant of the Jacobian of $F$). 

%This particular form allows us to find analogies with the results from transcendental dynamics in one complex variable since $f$ is a transcendental map from $\C$ to $\C$; moreover 
Transcendental Hénon maps provide a natural extension of the well-studied class of polynomial Hénon maps, where $f$ in (\ref{eq:trans hen}) is a polynomial from $\C$ to $\C$ of degree $d\ge 2$.
%There is a theorem by Friedland and Milnor \cite{FM89} which asserts that any polynomial automorphism of $\C^2$ with non trivial dynamical behaviour is conjugate to a finite composition of polynomial Hénon maps, so essentially the class of polynomial Hénon maps exhaust all interesting dynamical behaviour of polynomial automorphisms and that is why it is a very studied class. %An analogous theorem for transcendental Hènon maps does not seem to be possible, since the class of non-polynomial automorphisms is too large. 
%However
%The class of transcendental Hénon maps is particularly interesting because it is small enough to have general classification results and large enough to obtain significant new behaviour with respect to polynomial Hénon maps.
\\

In particular in this paper we are interested in analyzing escaping Fatou components for transcendental Hénon maps with rank 1 limit functions, and the reasons for this are explained below.

For polynomial Hénon maps unbounded forward orbits are in the Fatou set and converge to the point $[1:0:0]\in \ell_\infty$ \cite{BS91}, hence there is always only one escaping Fatou component, which is an attracting basin of $[1:0:0]$, so the matter of existence and properties of escaping Fatou components is essentially
settled. But to date, there is no classification for escaping Fatou components for
non-polynomial automorphisms of $\C^2$.

On the other hand, regarding rank 1 limit functions, for polynomial Hénon maps it is not even known whether rank 1 limit functions can exist, more precisely in \cite{LP14} the existence of rank 1 limit functions is excluded if the Jacobian is small enough.  There are very few examples of non-polynomial automorphisms of $\C^2$ with limit functions of rank 1 (\cite{JL04}, and \cite{BTBP}), all of which have non-constant Jacobian and non-escaping Fatou components. Moreover with regard to transcendental Henon maps, there are only two examples of rank 1 limit function (with escaping Fatou components), in \cite{BSZ} the map $F(z,w)=(e^{-z}+2w,z)$ with one invariant escaping Fatou component on which there are two limit functions both of rank 1; and in \cite{BBS} the map $F(z,w)=(e^{-z^2}-\delta w,z)$, with $\delta \in \R$, $\delta>2$, has one cycle of escaping Fatou components on which there are two limit functions both of rank 1.

In this paper we consider transcendental Hénon maps of the form:

\begin{equation}\label{F}
F(z,w):=\left(e^{-z^m} +\delta e^{\frac{2 \pi}{m}i} w,z\right)
\end{equation}

that is setting $f(z)=e^{-z^m}$ and $a= \delta e^{\frac{2 \pi}{m}i} $, with $m\in \N$, $m\geq 2$ and $\delta \in \R$, $\delta>2$. Notice that the example in \cite{BBS} belongs to this class if we set $m=2$.

The main theorem is the following.
\begin{thm}[Main Theorem]\label{Main Theorem}
Let $F$ be defined as in (\ref{F}), than
  \begin{itemize}
 \item There are $m^2$ distinct Fatou components that exhibit cyclic behavior, more precisely
      \begin{itemize}
          \item If $m$ is even there are $\frac{m}{2}$ cycles of escaping Fatou components of period $2m$. \\
          \item If $m$ is odd there are $\frac{m-1}{2}$ cycles of escaping Fatou components of period $2m$ and only one cycle of escaping Fatou components of period $m$.
      \end{itemize}
 
  \item Each cycle has exactly two distinct limit function $h_1$, $h_2$, both of which have generic rank 1.\\
  \item Each Fatou component in each cycle has two disjoint and hyperbolic limit sets, with the exception of the Fatou components belonging to the only cycle of period $m$ (the one occurring when $m$ is odd), which have the same hyperbolic limit set.\\
  
  \item Denote the union of the $m^2$ components with $\Omega$, than $F$ is conjugate to the linear map $L(z,w)=(\delta e^{\frac{2\pi}{m}i} w, z)$ on $\Omega$.\\
  
  \item Each Fatou component in each cycle is biholomorphic to $\H \times \H$.
\end{itemize}
\end{thm}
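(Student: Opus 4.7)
The strategy is to exploit the super-exponential decay of $e^{-z^m}$ on the $m$ sectors $S_k\subset\C$ where $\Re(z^m)>0$. In these sectors $F$ is a super-exponentially small perturbation of the linear model $L(z,w)=(\delta e^{2\pi i/m}w,z)$, and the whole theorem should follow by constructing a conjugation of $F$ to $L$ on suitable escaping initial data. Since $L^2=\delta e^{2\pi i/m}\cdot I$, every $L$-orbit escapes radially to $\ell_\infty$. Labeling each $(z,w)$ by the sector pair $(j,k)\in(\Z/m\Z)^2$ of $z$ and $w$, the linear model $L$ acts as the permutation $\sigma(j,k)=(k+1,j)$. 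A short computation gives $\sigma^{2\ell}(j,k)=(j+\ell,k+\ell)$ and $\sigma^{2\ell+1}(j,k)=(k+\ell+1,j+\ell)$, so a $\sigma$-orbit of length less than $2m$ occurs only when $m$ is odd and $k\equiv j+(m-1)/2\pmod m$, and its length is then exactly $m$. This is the combinatorial origin of the cycle count claimed in the theorem.

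Next I would construct the Fatou components. For each $\sigma$-orbit $\{(j_i,k_i)\}$ I would define a candidate region $\Omega_{j_0,k_0}$ of points with $z\in S_{j_0}$, $w\in S_{k_0}$, and $|z|,|w|$ above a sufficiently large threshold. Using the bound $|e^{-z^m}|\le e^{-c|z|^m}$ valid on $S_{j_0}$, one shows $F(\Omega_{j_0,k_0})\subset\Omega_{j_1,k_1}$, that orbits escape to $\ell_\infty$ along nearly linear trajectories, and that $\{F^n\}$ is normal on each region, giving the $m^2$ escaping Fatou components with the claimed periods.

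The main technical step is to define $\Phi_n:=L^{-n}\circ F^n$ on the union $\Omega$ of these components and show $\Phi_n\to\Phi$ uniformly on compact subsets. The increment $\Phi_{n+1}-\Phi_n$ is controlled by $\|L^{-(n+1)}\|$ times the nonlinear part of $F$ evaluated at $F^n(\cdot)$. Since $|F^n|$ grows geometrically like $\delta^{n/2}$, $\|L^{-n}\|=\delta^{-n/2}$, and the nonlinear error decays like $\exp(-c\delta^{nm/2})$, the telescoping series converges absolutely and produces a biholomorphism $\Phi\colon\Omega\to\Phi(\Omega)$ with $\Phi\circ F=L\circ\Phi$. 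I expect this to be the main obstacle: the uniform convergence requires a careful linkage between the geometric expansion of $L$-orbits and the super-exponential decay of the perturbation, and identifying $\Phi(\Omega)$ precisely is an additional delicate geometric task.

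The remaining assertions follow from $L$ via this conjugation. The two rank-one limit functions $h_1,h_2$ arise from the two cosets of $\Z/m\Z$ inside $\Z/2m\Z$; their rank-one image reflects the fact that $L^{2n}=(\delta e^{2\pi i/m})^n I$ is a scalar, so the limit in $\ell_\infty$ depends only on one projective parameter. Hyperbolicity of the limit sets follows from the expansion factor $\delta^m>1$, and for the odd-$m$ cycle of period $m$ the two cosets collapse onto the same limit set, which explains the exceptional coincidence there. Finally, once one passes to coordinates in which $L^2$ is diagonal and the sectors $S_k$ are straightened, the two conditions $\Re(z^m)>0$ and $\Re(w^m)>0$ defining $\Omega_{j_0,k_0}$ become two independent half-plane conditions, so $\Phi(\Omega_{j_0,k_0})$ splits as a product of half-planes and each Fatou component is biholomorphic to $\H\times\H$.
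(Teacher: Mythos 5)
Your overall strategy --- sector decomposition of $\C^2$ via $\Re(z^m)>0$, combinatorics of the permutation $\sigma(j,k)=(k+1,j)$, and conjugacy to the linear model $L$ via $\Phi_n=L^{-n}\circ F^n$ --- is essentially the paper's, and the cycle count and the source of the two limit functions are correctly identified. However, there is a genuine gap at the step you underplay. Your construction yields an invariant truncated sector on which $\Phi_n$ converges and which sits inside some Fatou component, but it does not address why these Fatou components are \emph{distinct}, nor why the conjugacy $\Phi$ extends from the invariant region to the entire Fatou component, nor why $\Phi$ maps each component exactly onto a product of sectors (as opposed to a larger domain containing it). The paper handles precisely this via the plurisubharmonic method: it builds an absorbing set $W_I$ for $\Omega$ (Proposition~\ref{prop:Absorbing WI}), proving via a family of harmonic functions $u_n(z_0,w_0)=-\Re(z_n^m)/n$ and the mean-value property on holomorphic discs that every compact subset of $\Omega$ is eventually mapped into $W_I$. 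Only with that in hand does one get (i) $\Phi$ defined on all of $\Omega$ by $\Phi = L^{-k}\circ\Phi\circ F^k$, (ii) $\Phi(\Omega)\subset S$ (Lemma~\ref{lem: phi(W_I) in S}), hence the $m^2$ components are distinct (Proposition~\ref{prop: four different FC}), (iii) $\Phi(\Omega)=S$ (Proposition~\ref{prop:Geometric structure}), hence each component is $\H\times\H$, and (iv) the exact identification of the limit sets as the sectors $U_j$ (which also uses a Rouch\'e argument, Lemma~\ref{lem:limit set for W}). Without an absorbing-set argument, one only obtains containments $h_i(W_{ab})\subseteq U_j$ on the invariant region, not the equalities $h_i(\Omega_{ab})=U_j$ stated in the theorem, and a priori two of the sets $W_{ab}$ could belong to the same Fatou component.

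Two smaller points. First, the super-exponential decay $\exp(-c\delta^{nm/2})$ that you invoke for convergence of $\Phi_n$ is true but unnecessary: the paper only uses $|e^{-z^m}|<1$ on the sectors, since the factor $|a^{-j}|=\delta^{-j}$ already gives geometric summability of the telescope. Second, your justification for hyperbolicity (``the expansion factor $\delta^m>1$'') misattributes the source: hyperbolicity of $h_i(\Omega_{ab})$ has nothing to do with expansion; it is simply that the limit set is contained in a proper open sector $U_j\subsetneq\C$ of opening $2\pi/m$, which is a hyperbolic Riemann surface.
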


The points of greatest interest are that the limit functions have rank 1, that the limit sets are hyperbolic, that we get cycles of escaping Fatou components where the dynamics vary depending on whether $m$ is even or odd.

\section{Cyclic behaviour}
As anticipated in the introduction, we consider 
$$
F(z,w):=(f(z)+ a w,z) \,\,\,\,\,\,\,\,\,\,\,\text{ with $f(z)=e^{-z^m}$ and $a= \delta e^{\frac{2 \pi}{m}i} $ },
$$ 
with $m\in \N$, $m\geq 2$ and $\delta\in \R$, $\delta >2$.

Define the following $m$ open subsets of $\C$:
$$
\mathcal{S}_k:=\left\{ z\in \C \,:\,\left | \im \Big(z \,e^{\frac{2(m-k)}{m}\pi i}\Big)\right|  < \tan\Big(\frac{\pi}{2m}\Big) \re \Big(z\, e^{\frac{2(m-k)}{m}\pi i}\Big)  \right\},
$$
with $k\in \Z_m$. And let 
$$
\mathcal{S}=\bigcup_k \mathcal{S}_k,\,\text{ with }k\in\Z_m .
$$
Observe the following simple lemma asserting that $f$ is bounded on $\mathcal{S}$.
\begin{lem}\label{lem:control of f}
Let $z\in \mathcal{S}$, then
\begin{equation}
|f(z)|=|e^{-z^m}|< 1.
\end{equation} 
\end{lem}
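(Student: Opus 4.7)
The plan is to recognize each $\mathcal{S}_k$ as an open sector of half-angle $\pi/(2m)$ around a specific ray, and then use the fact that raising to the $m$-th power multiplies arguments by $m$ and kills the rotation factor $e^{\frac{2(m-k)}{m}\pi i}$ (because its $m$-th power is $1$).

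More concretely, I would set $w = z\, e^{\frac{2(m-k)}{m}\pi i}$ and rewrite the defining inequality of $\mathcal{S}_k$ as $|\im w| < \tan(\pi/(2m))\, \re w$. Since $\tan(\pi/(2m))>0$ for $m\ge 2$, this forces $\re w > 0$, and combined with $|\im w|/\re w < \tan(\pi/(2m))$ it is equivalent to $|\arg w| < \pi/(2m)$ (taking the principal branch). Writing $w = r e^{i\theta}$ with $r>0$ and $|\theta|<\pi/(2m)$, I then compute
$$
z^m = w^m \, e^{-2(m-k)\pi i} = w^m = r^m e^{im\theta},
$$
so $\re(z^m) = r^m \cos(m\theta)$. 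Because $|m\theta| < \pi/2$, the cosine is strictly positive, hence $\re(z^m) > 0$.

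Finally, $|f(z)| = |e^{-z^m}| = e^{-\re(z^m)} < e^0 = 1$, which is the claim. There is no real obstacle here; the only thing to be careful about is that the inequality defining $\mathcal{S}_k$ is strict and that $\re w \le 0$ is automatically excluded, so the strictness propagates all the way to $|f(z)|<1$.
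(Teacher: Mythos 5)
Your proof is correct and follows essentially the same approach as the paper's, which observes that $z\in\mathcal{S}_k$ forces $\re(z^m)>0$ and then concludes $|e^{-z^m}|=e^{-\re(z^m)}<1$. You are in fact a bit more careful than the paper's one-line argument: you explicitly note that the rotation $e^{\frac{2(m-k)}{m}\pi i}$ vanishes after raising to the $m$-th power, so $z^m=w^m$ with $|\arg w|<\pi/(2m)$, whereas the paper loosely writes $|\arg(z)|<\pi/(2m)$ (which literally holds only for $k=0$).
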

\begin{proof}
If $z\in \mathcal{S}_k$, then $|\arg(z)|< \frac{\pi}{2m}$ and hence $\re z^m >0$, from which we have $|e^{-z^m}|=e^{-\re z^m} < 1$.
\end{proof}

Consider also the following $m^2$ open subsets of $\C^2$
$$
S_{k_1k_2}:=\mathcal{S}_{k_1}\times \mathcal{S}_{k_2}
$$
with $k_1,k_2\in\Z_m$, and let
$$
S:=\bigcup_{k_1,k_2} S_{k_1k_2},\,\text{ with }k_1,k_2\in\Z_m.
$$
Sometimes we will use $S_{ab}$ instead of $S_{k_1 k_2}$ to simplify notation, with $a,b \in \Z_m$.

For $P=(z_0,w_0)\in \C^2$ and $n\in\N$ we define $(z_n,w_n):=F^n(P)$ the $n$-th iterate of the point $P$ under the action of $F$.
Using the expression of $F$, we can compute explicitly the iterates $F^{2n}$ and $F^{2n+1}$:
\begin{equation}\label{eq:iteratesp}
F^{2n}(z_0,w_0)= \left(a^nz_0+a^n\sum_{j=1}^na^{-j}f(z_{2j-1})\,,\,a^nw_0+a^n\sum_{j=1}^na^{-j}f(z_{2j-2})\right)    
\end{equation}
\begin{equation}\label{eq:iteratesd}
F^{2n+1}(z_0,w_0)= \left(a^{n+1}w_0+a^{n+1}\sum_{j=1}^{n+1}a^{-j}f(z_{2j-2})\,,\,a^n z_0+a^n\sum_{j=1}^n a^{-j}f(z_{2j-1})\right) .
\end{equation}

Define 
\begin{align}\label{eq:iterates1}
\Delta_1(z_0,w_0)&:=\sum_{j=1}^\infty a^{-j}f(z_{2j-1})\\
\label{eq:iterates2}\Delta_2(z_0,w_0)&:=\sum_{j=1}^\infty a^{-j}f(z_{2j-2})\\
\label{eq:Delta} \Delta &:= \sum_{j=1}^\infty |a|^{-j}= \sum_{j=1}^\infty \delta^{-j}. 
\end{align}
Notice that $\Delta=\frac{\delta}{\delta-1}-1$, and since $\delta>2$, $\Delta<1$. Moreover, using Lemma~\ref{lem:control of f}, we have the following.
\begin{rem}\label{rem: bounds on Delta in S}
Let $P=(z_0,w_0) \in S$ such that $F^n(z_0,w_0)\in S$ for all $n\in \N$, then
$$|\Delta_1(z_0,w_0)|, \,|\Delta_2(z_0,w_0)|< \Delta < 1 .$$
\end{rem} 

For the rest of this section, we shall consider $P$ to be a point in $S$ such that $F^n(P)\in S$ for all $n\in \N$.

For such $P=(z_0,w_0)$ we can also deduce the following formal limits:
\begin{equation}\label{eq:limit h1}
h_1(z_0,w_0):=\lim_{n\rightarrow \infty}\frac{z_{2n}}{w_{2n}}=\frac{z_0+\Delta_1(z_0,w_0)}{w_0+\Delta_2(z_0,w_0)}
\end{equation}
\begin{equation}\label{eq:limit h2}
h_2(z_0,w_0):= \lim_{n\rightarrow \infty}\frac{z_{2n+1}}{w_{2n+1}}=\frac{a w_0+a \Delta_2(z_0,w_0)}{z_0+\Delta_1(z_0,w_0)}=\frac{a}{h_1}
\end{equation}

and we will later show in Proposition~\ref{prop:h1 h2} that $h_1 \ne h_2$.

Notice that such a $P$ exists, indeed each $S_{ab}$ contains the set 
\begin{equation}\label{eq:Aab}A_{ab}:=\left\{ \re \left( ze^{\frac{2(m-a)}{m}\pi i} \right),\re\left( w e^{\frac{2(m-b)}{m}\pi i}\right)> M\,,\,\im \left( ze^{\frac{2(m-a)}{m}\pi i} \right),\im \left( we^{\frac{2(m-a)}{m}\pi i} \right)=0 \right\}
\end{equation}
for $M$ sufficiently large, and
$$A:=\bigcup_{a,b\in\Z_m} A_{ab} \subset S$$
is forward invariant under $F$, so for example each $P\in A$ satisfies the requirement.

\begin{lem}[Forever in $S$ implies convergence]\label{lem:forever in S1}
Let $P=(z_0,w_0)\in S$ such that $F^n(P)\in S$ for all $n \in \N$, then 
\begin{align*}
F^{2n}(z_0,w_0)&\ra h_1(z_0,w_0)\\
F^{2n+1}(z_0,w_0)&\ra h_2(z_0,w_0)\\
\end{align*}
\end{lem}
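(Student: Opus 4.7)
The plan is to exploit the closed-form expressions \eqref{eq:iteratesp}--\eqref{eq:iteratesd} for $F^{2n}$ and $F^{2n+1}$, combined with the uniform bound $|f(z_k)|<1$ along the orbit provided by Lemma~\ref{lem:control of f}, and then to translate the resulting estimates into convergence in $\P^2$ rather than $\C^2$. Since the orbit of $P$ is assumed to remain in $S$, Remark~\ref{rem: bounds on Delta in S} ensures that the partial sums
$$T_1^{(n)} := \sum_{j=1}^{n} a^{-j} f(z_{2j-1}), \qquad T_2^{(n)} := \sum_{j=1}^{n} a^{-j} f(z_{2j-2})$$
converge absolutely to $\Delta_1(z_0,w_0)$ and $\Delta_2(z_0,w_0)$, with geometric tails bounded by $\delta^{-n}/(\delta-1)$.

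The key structural observation is that in \eqref{eq:iteratesp} both affine coordinates carry the common factor $a^n$, and in \eqref{eq:iteratesd} they carry $a^{n+1}$ and $a^n$ respectively. Passing to homogeneous coordinates on $\P^2$ and dividing by the largest power in each case yields
$$F^{2n}(z_0,w_0) = \bigl[\,z_0 + T_1^{(n)} : w_0 + T_2^{(n)} : a^{-n}\,\bigr], \qquad F^{2n+1}(z_0,w_0) = \bigl[\,a(w_0 + T_2^{(n+1)}) : z_0 + T_1^{(n)} : a^{-(n+1)}\,\bigr].$$
Since $|a|=\delta>1$, the last entry tends to $0$ in both cases; combined with the tail estimates above, one obtains
$$F^{2n}(z_0,w_0) \longrightarrow [\,z_0 + \Delta_1 : w_0 + \Delta_2 : 0\,], \qquad F^{2n+1}(z_0,w_0) \longrightarrow [\,a(w_0 + \Delta_2) : z_0 + \Delta_1 : 0\,],$$
which are precisely the points $h_1(z_0,w_0)$ and $h_2(z_0,w_0)$ on $\ell_\infty$ according to \eqref{eq:limit h1}--\eqref{eq:limit h2}, consistently with $h_2 = a/h_1$.

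There is no serious obstacle here; the argument is essentially a geometric-series tail bound combined with one structural remark, namely that the growth factor $a^n$ appears in a uniform way across the two affine coordinates and therefore disappears upon normalising to projective coordinates. The only subtlety worth flagging is the projective interpretation of the conclusion: $h_1$ and $h_2$ are defined as scalar ratios in \eqref{eq:limit h1}--\eqref{eq:limit h2}, but the limit of $F^n(P)$ naturally lives on the line at infinity, and this identification is built into the convention of treating limit functions as holomorphic maps $\Omega \to \P^2$ used throughout the paper.
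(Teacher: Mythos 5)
Your proposal is correct and follows the same approach as the paper's proof — you use the closed-form expressions for the iterates, the orbit-long bound $|f(z_n)|<1$, and the resulting convergence of $\Delta_1,\Delta_2$ — while making explicit the projective normalization that the paper leaves implicit. The only slip is a harmless typo in the normalization of $F^{2n+1}$: dividing by $a^{n}$ gives third homogeneous coordinate $a^{-n}$ rather than $a^{-(n+1)}$, but either way it tends to $0$ and the limit is unaffected.
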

\begin{proof}
Since $F^n(P)\in S$ for all $n\in\N$ we have that  $z_n\in \mathcal{S}$ for all $n\in\N$ and hence by Lemma~\ref{lem:control of f} $|f(z_n)|< 1$ for all $n\in\N$.  This implies that $|\Delta_1(z_0,w_0)|$, $ |\Delta_2(z_0,w_0)|<\Delta<1$, which implies convergence of the even and odd iterates of $F$.
\end{proof}

With the following proposition we show that there is a cyclic behaviour.
\begin{prop}\label{prop:cicli}
Let $P=(z_0,w_0)\in S_{a b}$, such that $F^n(P)\in S$ for all $n\in \N$. If $\re z_0, \re w_0$ are sufficiently large, then $F(P)\in S_{(b+1)a}$.
\end{prop}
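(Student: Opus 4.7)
The plan is to analyze $F(z_0,w_0)=(z_1,w_1)$ directly, where $z_1=e^{-z_0^m}+\delta e^{2\pi i/m}w_0$ and $w_1=z_0$. The second subscript in $S_{(b+1)a}$ is free: since $P\in S_{ab}$ we have $z_0\in\mathcal{S}_a$, hence $w_1=z_0\in\mathcal{S}_a$. The real task is therefore to show $z_1\in\mathcal{S}_{b+1}$, with the index read modulo $m$.

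First I would decompose $z_1=\epsilon+\delta e^{2\pi i/m}w_0$ with $\epsilon:=e^{-z_0^m}$. Since $z_0\in\mathcal{S}$, Lemma~\ref{lem:control of f} yields $|\epsilon|<1$, so $\epsilon$ is a bounded perturbation. The rotation $z\mapsto e^{2\pi i/m}z$ cyclically permutes the $\mathcal{S}_k$'s (they are evenly spaced around the origin with the same opening $\pi/m$), so the main term $\delta e^{2\pi i/m}w_0$ lies in $\mathcal{S}_{b+1}$ with modulus $\delta|w_0|$, which is large by hypothesis.

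The key geometric observation is that distinct cones are separated by an angular gap of $\pi/m$: for any point $p\in\mathcal{S}_{b+1}$ of modulus $r$, its Euclidean distance to every other $\mathcal{S}_k$ is bounded below by a positive linear function of $r$ (essentially of order $r\sin(\pi/m)$). Consequently, once $|w_0|$ is large enough, the closed disk of radius $1$ around $\delta e^{2\pi i/m}w_0$ meets no cone except $\mathcal{S}_{b+1}$. Since $|z_1-\delta e^{2\pi i/m}w_0|=|\epsilon|<1$, the point $z_1$ must lie either in $\mathcal{S}_{b+1}$ or in the complement $\C\setminus\mathcal{S}$.

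The final step invokes the standing hypothesis: the case $n=1$ of $F^n(P)\in S$ for all $n\in\N$ says $F(P)\in S$, hence $z_1\in\bigcup_k\mathcal{S}_k$, which excludes the complement. Combined with the separation argument it forces $z_1\in\mathcal{S}_{b+1}$, so $F(P)\in S_{(b+1)a}$. I expect the main technical point to be making the inter-cone separation estimate quantitative in a way that is uniform as $w_0$ ranges over $\mathcal{S}_b$, so that a single threshold on $\re z_0,\re w_0$ suffices; once that is in place, the perturbative argument built on Lemma~\ref{lem:control of f} is essentially immediate.
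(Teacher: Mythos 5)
Your proof is correct and follows essentially the same route as the paper's: handle $w_1=z_0$ trivially, write $z_1$ as a perturbation of size $<1$ (via Lemma~\ref{lem:control of f}) of $\delta e^{2\pi i/m}w_0\in\mathcal{S}_{b+1}$, observe that for $|w_0|$ large the unit disk around that point clears the other cones thanks to the $\pi/m$ angular gap, and invoke the standing hypothesis $F(P)\in S$ to rule out the complement. The only cosmetic difference is that you argue in $S_{ab}$ directly and spell out the separation estimate $\sim r\sin(\pi/m)$, while the paper reduces to $S_{00}$ and leaves that estimate implicit.
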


\begin{proof}
We prove the claim for $P=(z_0,w_0)\in S_{00}$, the other cases are analogous.

By hypothesis we know that $F(P)=(z_1,w_1)\in S_{ab}$ for some $a,b \in \Z_m$.
Since $P\in S_{00}$, we have 
$$|\im z_0|< \tan\left(\frac{\pi}{2m}\right) \re z_0$$ and $$|\im w_0|<\tan\left(\frac{\pi}{2m}\right) \re w_0,$$
moreover $w_1=z_0$, so $|\im w_1|<\tan\left(\frac{\pi}{2m}\right) \re w_1$, and so $b=0$.

Recall that $z_1=e^{-z_0^m}+\delta e^{\frac{2 \pi}{m}i}w_0$, and notice that $\delta e^{\frac{2 \pi}{m}i}w_0 \in \mathcal{S}_1$. Furthermore $|e^{-z_0^m}|<1$, then $z_1\,e^{\frac{2(m-1)}{m}\pi i}$ belongs to the $1$-neighborhood of $\mathcal{S}_0$.
Choosing $\re w_0$ sufficiently large, this $1$-neighborhood of $\mathcal{S}_0$ intersects $\mathcal{S}$ only in $\mathcal{S}_0$ and so $a=1$.
\end{proof}

To better understand the cycling behavior of the sectors $S_{ab}$, with $a,b \in \Z_m$, % under the linear map $L=(\delta e^{\frac{2 \pi}{m}i}, z)$, 
 let us consider the following application: 

$$\gamma:\Z_m\times \Z_m \longrightarrow \Z_m \times \Z_m \,\,\,\,\,\text{ defined as }\,\,\,\,\,
 \gamma(a,b):=(b+1,a)\,.$$
 
It is easy to check that
$$
\gamma^{2n}(a,b)=(a+n,b+n)
$$ and
$$\gamma^{2n+1}(a,b)=(b+n+1,a+n)\,.$$
In order to have cycles we need to set the following equations:
$$\gamma^{2n}(a,b)=(a,b) $$
$$\gamma^{2n+1}(a,b)=(a,b) $$
and we obtain respectively $2n=2m$ and $2n+1=m$.
So we can have cycles of period $2m$ or of period $m$. Since we have $m^2$ sectors, to understand how many and which cycles have a period $2m$ or $m$, we first need to solve the equation
$$m^2=A2m+Bm$$
that is $m=2A+B$, with $A,B \in \N$.

Additionally, notice that for each cycle, we can take $(0,b)$ as a representative. Therefore, let us see after how many iterations, in each cycle, we obtain $(0, \Tilde{b})$. We again consider  
$$\gamma^{2n}(0,b)=(0, \Tilde{b})$$
from which we obtain, after $2m$ iterations, $\Tilde{b}=b$, and
$$\gamma^{2n+1}(0,b)=(0,\Tilde{b})$$
from which we have, after $2m-(2b+1)$ iterations, $\Tilde{b}=m-(b+1)$  or, after $m$ iterations, $\Tilde{b}=b=\frac{m-1}{2}$. This means that we can have at most one cycle of period $m$, the one represented by $(0,\frac{m-1}{2})$.

If $m$ is even we can not have the cycle of period $m$, since $\frac{m-1}{2}\notin \N$, so in this case we get $A=\frac{m}{2}$ and $B=0$, that is we have $\frac{m}{2}$ cycles of period $2m$ and zero of period $m$. Moreover in each cycle there is $(0,b)$ and after $2m-(2b+1)$ iterations also $(0,m-(b+1))$.

If $m$ is odd, we have $B=1$ and $A=\frac{m-1}{2}$, that is there are $\frac{m-1}{2}$ cycles of period $2m$
and one cycle of period $m$, which we refer to as the \textit{short cycle}, and it is the one represented by $(0, \frac{m-1}{2})$. In each cycle of period $2m$, as in the even case, there is $(0,b)$ and after $2m-(2b+1))$ iterations also $(0,m-(b+1))$.

To better understand these cycles, let us consider two examples: $m = 5 $ and $m = 6$.

$$\begin{displaystyle}
m=5\,\,\,\,\,\,\,\,\,\,\,\,\,\,\,\,\,\,
  \begin{cases}
\textbf{00} &\textbf{ iteration 0}\\
10 &\text{ iteration 1}\\
11 &\text{ iteration 2}\\
21 &\text{ iteration 3}\\
22 &\text{ iteration 4}\\
32 &\text{ iteration 5}\\
33 &\text{ iteration 6}\\
43 &\text{ iteration 7}\\
44 &\text{ iteration 8}\\
\textbf{04} &\textbf{ iteration 9}\\
\textbf{00} &\textbf{ iteration 10}
\end{cases}
\,\,\,\,\,\,\,\,\,\,\,\,\,\,\,\,\,\,
\begin{cases}
\textbf{01} &\textbf{ iteration 0}\\
20 &\text{ iteration 1}\\
12 &\text{ iteration 2}\\
31 &\text{ iteration 3}\\
23 &\text{ iteration 4}\\
42 &\text{ iteration 5}\\
34 &\text{ iteration 6}\\
\textbf{03} &\textbf{ iteration 7}\\
40 &\text{ iteration 8}\\
14 &\text{ iteration 9}\\
\textbf{01} &\textbf{ iteration 10}
  \end{cases}
  \,\,\,\,\,\,\,\,\,\,\,\,\,\,\,\,\,\,
\begin{cases}
\textbf{02} &\textbf{ iteration 0}\\
30 &\text{ iteration 1}\\
13 &\text{ iteration 2}\\
41 &\text{ iteration 3}\\
24 &\text{ iteration 4}\\
\textbf{02} &\textbf{ iteration 5}\\    
  \end{cases}
\end{displaystyle}$$

$$\begin{displaystyle}
m=6\,\,\,\,\,\,\,\,\,\,\,\,\,\,\,\,\,\,
  \begin{cases}
\textbf{00} &\textbf{ iteration 0}\\
10 &\text{ iteration 1}\\
11 &\text{ iteration 2}\\
21 &\text{ iteration 3}\\
22 &\text{ iteration 4}\\
32 &\text{ iteration 5}\\
33 &\text{ iteration 6}\\
43 &\text{ iteration 7}\\
44 &\text{ iteration 8}\\
54 &\text{ iteration 9}\\
55 &\text{ iteration 10}\\
\textbf{05} &\textbf{ iteration 11}\\
\textbf{00} &\textbf{ iteration 12}
\end{cases}
\,\,\,\,\,\,\,\,\,\,\,\,\,\,\,\,\,\,
\begin{cases}
\textbf{01} &\textbf{ iteration 0}\\
20 &\text{ iteration 1}\\
12 &\text{ iteration 2}\\
31 &\text{ iteration 3}\\
23 &\text{ iteration 4}\\
42 &\text{ iteration 5}\\
34 &\text{ iteration 6}\\
53 &\text{ iteration 7}\\
45 &\text{ iteration 8}\\
\textbf{04} &\textbf{ iteration 9}\\
50 &\text{ iteration 10}\\
15 &\text{ iteration 11}\\
\textbf{01} &\textbf{ iteration 12}
  \end{cases}
  \,\,\,\,\,\,\,\,\,\,\,\,\,\,\,\,\,\,
\begin{cases}
\textbf{02} &\textbf{ iteration 0}\\
30 &\text{ iteration 1}\\
13 &\text{ iteration 2}\\
41 &\text{ iteration 3}\\
24 &\text{ iteration 4}\\
52 &\text{ iteration 5}\\
35 &\text{ iteration 6}\\
\textbf{03} &\textbf{ iteration 7}\\
40 &\text{ iteration 8}\\
14 &\text{ iteration 9}\\
51 &\text{ iteration 10}\\
25 &\text{ iteration 11}\\
\textbf{02} &\textbf{ iteration 12}
  \end{cases}
\end{displaystyle}$$

In the following proposition we analyze how the real part of $P$ increases.
\begin{prop}[Growth of the real part]\label{prop:come cresce re}
Let $P=(z_0,w_0)\in S_{00}$, such that $F^n(P)\in S$ for all $n\in \N$. Then for all $\lambda>0$, if $\re w_0,\re z_0>\frac{1+\lambda}
{\delta-1}$
$$
\re \left(z_{2n-1}\,e^{\frac{2(m-(2n-1))}{m}\pi i}\right)=\re \left(w_{2n}\,e^{\frac{2(m-(2n-1))}{m}\pi i}\right)> \re w_0+n\lambda \,,
$$
$$
\re \left(z_{2n}\,e^{\frac{2(m-(2n-1))}{m}\pi i}\right)=\re \left(w_{2n+1}\,e^{\frac{2(m-(2n-1))}{m}\pi i}\right)> \re z_0+n\lambda\, .
$$
\end{prop}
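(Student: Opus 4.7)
The plan is to prove both estimates simultaneously by induction on $n$, exploiting that the H\'enon shape $w_n=z_{n-1}$ reduces the orbit to the scalar two-step recursion
\[
z_{n+1}=e^{-z_n^m}+\delta e^{\frac{2\pi}{m}i}\,z_{n-1}.
\]
From the cycle analysis following Proposition~\ref{prop:cicli}, the $S_{00}$-orbit satisfies $z_{2n-1},z_{2n}\in\mathcal{S}_n$ for all $n\ge 1$, so the natural straightening rotation is multiplication by $e^{\frac{2(m-n)}{m}\pi i}=e^{-\frac{2n}{m}\pi i}$, which brings $z_{2n-1}$ and $z_{2n}$ into the cone of half-aperture $\frac{\pi}{2m}$ around the positive real axis; this is precisely the combination whose real part the proposition measures.

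For the base case $n=1$ I would expand $z_1=e^{-z_0^m}+\delta e^{\frac{2\pi}{m}i}w_0$ and multiply by $e^{-\frac{2\pi}{m}i}$ to get
\[
\re(z_1\,e^{-\frac{2\pi}{m}i})\ge -|e^{-z_0^m}|+\delta\,\re w_0>-1+\delta\,\re w_0,
\]
where the strict inequality is Lemma~\ref{lem:control of f} applied to $z_0\in\mathcal{S}_0$. The hypothesis $\re w_0>\frac{1+\lambda}{\delta-1}$ then gives exactly $\re(z_1\,e^{-\frac{2\pi}{m}i})>\re w_0+\lambda$, and the identical calculation starting from $z_2=e^{-z_1^m}+\delta e^{\frac{2\pi}{m}i}z_0$ (using $w_1=z_0$) produces $\re(z_2\,e^{-\frac{2\pi}{m}i})>\re z_0+\lambda$.

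For the inductive step I would repeat this one level higher: writing
\[
z_{2n-1}\,e^{-\frac{2n}{m}\pi i}=e^{-z_{2n-2}^m}\,e^{-\frac{2n}{m}\pi i}+\delta\,z_{2n-3}\,e^{-\frac{2(n-1)}{m}\pi i},
\]
Lemma~\ref{lem:control of f} bounds the first summand below by $-1$ and the inductive hypothesis on $z_{2n-3}$ feeds the second, giving $\re(z_{2n-1}\,e^{-\frac{2n}{m}\pi i})>-1+\delta(\re w_0+(n-1)\lambda)$. This exceeds $\re w_0+n\lambda$ precisely when $(\delta-1)\re w_0+((\delta-1)(n-1)-1)\lambda>1$, which follows from $(\delta-1)\re w_0>1+\lambda$ since $\delta>2$; the even-index bound is the parallel computation with $\re z_0$ in place of $\re w_0$. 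The only real obstacle is careful index bookkeeping --- once one recognizes that $e^{-\frac{2n}{m}\pi i}$ is exactly the rotation that turns $\delta e^{\frac{2\pi}{m}i}z_{n-1}$ into a positive multiple of the previously straightened iterate, the scalar inequality $(\delta-1)\re w_0>1+\lambda$ does the remaining work on its own.
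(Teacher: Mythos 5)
Your proof is correct, and it is precisely the inductive expansion the paper compresses into ``the more general formula follows by induction'': your base case is the paper's displayed estimate, and your inductive step turns on the same cancellation $a\,e^{-\frac{2n}{m}\pi i}=\delta\,e^{-\frac{2(n-1)}{m}\pi i}$ that makes the base case work. One point you should flag explicitly rather than assert: the rotation $e^{\frac{2(m-n)}{m}\pi i}$ you actually use is \emph{not} the one printed in the proposition, which reads $e^{\frac{2(m-(2n-1))}{m}\pi i}$; these coincide only when $n=1$. Since you correctly derive from Proposition~\ref{prop:cicli} that the $S_{00}$-orbit has $z_{2n-1},z_{2n}\in\mathcal{S}_n$, the aligning rotation must be $e^{\frac{2(m-n)}{m}\pi i}$, and it is this choice that turns $\delta e^{\frac{2\pi}{m}i}z_{2n-3}$ into $\delta$ times the previously straightened iterate. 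For $n\geq 2$ the printed exponent $m-(2n-1)$ does not bring $z_{2n-1}\in\mathcal{S}_n$ near the positive real axis at all, so the statement as written appears to carry a typo (the iterate index $2n-1$ has leaked into the sector index, which should be $n$); the same slip propagates to the generic-$S_{ab}$ display after the proof, where $b+n$ and $a+n$ should appear in place of $b+(2n-1)$ and $a+(2n-1)$. Writing ``this is precisely the combination the proposition measures'' therefore overstates the match; say instead that this is the combination the proposition \emph{should} measure, and that your proof establishes the corrected form.
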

\begin{proof}
Let $P=(z_0,w_0)$ as in the hypothesis. Since $P\in S$, then by Lemma~\ref{lem:control of f}, we have
\begin{align*}
\re \left( z_1\,e^{\frac{2(m-1))}{m}\pi i}\right)&=\re \left(e^{-z_0^m}\,e^{\frac{2(m-1))}{m}\pi i} \right) + \delta \re w_0>\\
&>\delta \re w_0 -\left|\re\left( e^{-z_0^m}\,e^{\frac{2(m-1))}{m}\pi i}\right) \right|> \delta \re w_0 - 1
\end{align*}
so
\begin{equation}\label{P in S}
    \re \left( z_1\,e^{\frac{2(m-1))}{m}\pi i}\right)>\delta \re w_0 - 1
\end{equation}
 which is larger than $\re w_0+\lambda$ if $\re w_0>\frac{1+\lambda}{\delta -1}$ as required. The claim for $z_{2}$ follows because $w_1=z_0$ and the more general formula follows by induction.
\end{proof}

If we substitute $S_{00}$ with a generic $S_{ab}$, with $a,b \in \Z_m$, in the Proposition~\ref{prop:come cresce re}, we obtain the following.

$$
\re \left(z_{2n-1}\,e^{\frac{2(m-b-(2n-1))}{m}\pi i}\right)=\re \left(w_{2n}\,e^{\frac{2(m-b-(2n-1))}{m}\pi i}\right)> \re w_0+n\lambda \,,
$$
$$
\re \left(z_{2n}\,e^{\frac{2(m-a-(2n-1))}{m}\pi i}\right)=\re \left(w_{2n+1}\,e^{\frac{2(m-a-(2n-1))}{m}\pi i}\right)> \re z_0+n\lambda\, .
$$

\section{Construction of a forward invariant open set $W$}

The purpose of this section is to construct a forward invariant open set $W\subset S$, that is $F(W)\subset W$, and so Lemma \ref{lem:control of f}, Lemma \ref{lem:forever in S1}, Proposition \ref{prop:cicli} and Proposition \ref{prop:come cresce re} hold on $W$.
With this in mind let us  introduce the following $m$ subsets of $\C$:
$$
\left(\mathcal{W}_{\sigma,R}\right)_k:=\left\{ z \in \C\,:\,\left | \im \Big(z \,e^{\frac{2(m-k)}{m}\pi i}\Big)\right|  < \sigma \re \Big(z\, e^{\frac{2(m-k)}{m}\pi i}\Big)  \right\} \subset \mathcal{S}_k ,
$$
with $0<\sigma< \tan(\pi/2m)$, $\re\Big(z\, e^{\frac{2(m-k)}{m}\pi i}\Big)>R$ and $k\in \Z_m$.

Define also the following $m^2$ subsets of $\C^2$:
$$
\left( W_{\sigma,R_1,R_2}\right)_{k_1 k_2}:=\left(\mathcal{W}_{\sigma,R}\right)_{k_1} \times \left(\mathcal{W}_{\sigma,R}\right)_{k_2}\subset S_{k_1k_2}
$$
with $k_1, k_2\in Z_m$ and let
$$ 
W_{\sigma,R_1,R_2}:=\bigcup_{k_1,k_2}\left( W_{\sigma,R_1,R_2}\right)_{k_1k_2} .
$$

\begin{prop}[Amplitude]\label{prop:im/re}
    Let $(z_0,w_0)\in \left( W_{\sigma,R_1,R_2}\right)_{00} $ and let $0<\sigma<\Tilde{\sigma}<\tan(\pi/2m)<1$. if $R_2>\frac{2}{\delta (\Tilde{\sigma}-\sigma)}$, then
    $$
     \frac{\left|\im \left(z_1 e^{\frac{2(m-1)}{m}\pi i}\right)\right|}{\re\left(z_1 e^{\frac{2(m-1)}{m}\pi i}\right)} < \Tilde{\sigma}\,.
    $$
\end{prop}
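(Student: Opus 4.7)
The plan is to reduce the claim to a direct numerator/denominator estimate after rotating by $e^{-2\pi i/m}$. Since we are in the sector $(W_{\sigma,R_1,R_2})_{00}$, the defining rotations are trivial, so the hypotheses read $|\im z_0| < \sigma \re z_0$ and $|\im w_0| < \sigma \re w_0$ with $\re w_0 > R_2$. Multiplying the formula $z_1 = e^{-z_0^m} + \delta e^{2\pi i/m} w_0$ by $e^{-2\pi i/m}$ gives
\[
z_1\, e^{\frac{2(m-1)}{m}\pi i} \;=\; e^{-z_0^m}\, e^{-\frac{2\pi i}{m}} \;+\; \delta\, w_0,
\]
so the only obstruction to the ratio being small is the bounded error term $e^{-z_0^m} e^{-2\pi i/m}$.

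Next I would invoke Lemma~\ref{lem:control of f}: since $z_0 \in \mathcal{S}_0$, we have $|e^{-z_0^m}| < 1$, hence both the real and imaginary parts of the error term are bounded by $1$ in absolute value. Taking real and imaginary parts of the displayed identity yields
\[
\re\!\left(z_1 e^{\frac{2(m-1)}{m}\pi i}\right) > \delta \re w_0 - 1, \qquad \left|\im\!\left(z_1 e^{\frac{2(m-1)}{m}\pi i}\right)\right| < \delta\,|\im w_0| + 1 < \delta\sigma \re w_0 + 1.
\]
Both estimates are elementary; the role of the hypothesis on $R_2$ is exactly to beat the additive $1$'s.

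Finally I would show the ratio bound
\[
\frac{\delta\sigma \re w_0 + 1}{\delta \re w_0 - 1} < \tilde{\sigma}
\]
is equivalent (for $\re w_0$ large) to $\re w_0 > \dfrac{1+\tilde{\sigma}}{\delta(\tilde{\sigma}-\sigma)}$. Because $\tilde{\sigma} < 1$, the right-hand side is strictly smaller than $\dfrac{2}{\delta(\tilde{\sigma}-\sigma)} < R_2 < \re w_0$, so the inequality holds and the claim follows.

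No step is really a serious obstacle: the argument is a one-step propagation using Lemma~\ref{lem:control of f}. The only bit of care is the algebraic manipulation to extract the threshold $\frac{2}{\delta(\tilde{\sigma}-\sigma)}$ from the sharper threshold $\frac{1+\tilde{\sigma}}{\delta(\tilde{\sigma}-\sigma)}$, which is where the constant $2$ (rather than, say, $1+\tilde{\sigma}$) in the hypothesis on $R_2$ comes from.
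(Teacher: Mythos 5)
Your proof is correct and follows essentially the same route as the paper: rotate by $e^{-2\pi i/m}$, use Lemma~\ref{lem:control of f} to bound the error term $e^{-z_0^m}$ by $1$, obtain the numerator/denominator estimates $\delta\sigma\re w_0 + 1$ and $\delta\re w_0 - 1$, and then check that the resulting threshold $\frac{1+\tilde\sigma}{\delta(\tilde\sigma-\sigma)}$ is dominated by $\frac{2}{\delta(\tilde\sigma-\sigma)}$ because $\tilde\sigma<1$. The only difference is that you spell out the rotation identity explicitly, which the paper leaves implicit.
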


\begin{proof}
Let $(z_0,w_0)\in \left( W_{\sigma,R_1,R_2}\right)_{00} $, using Lemma~\ref{lem:control of f} and the fact that $|\im  w_0|<\sigma \re w_0$ we have
  $$
     \frac{\left|\im \left(z_1 e^{\frac{2(m-1)}{m}\pi i}\right)\right|}{\re\left(z_1 e^{\frac{2(m-1)}{m}\pi i}\right)} < \frac{\delta |\im w_0|+1}{\delta \re w_0-1}<\frac{\delta \sigma \re w_0+1}{\delta \re w_0-1}
$$  
which is less than $\Tilde{\sigma}$ if $\re w_0>\frac{1+\Tilde{\sigma}}{\delta (\Tilde{\sigma}-\sigma)}$. Since $\Tilde{\sigma}<1$, it is enough to take $\re w_0>\frac{2}{\delta (\Tilde{\sigma}-\sigma)}$ as required.

\end{proof}

If we substitute $S_{00}$ with a generic $S_{ab}$ with $a,b \in \Z_m$, in the Proposition~\ref{prop:im/re}, we obtain the following:
 $$
     \frac{\left|\im \left(z_1 e^{\frac{2(m-1-b)}{m}\pi i}\right)\right|}{\re\left(z_1 e^{\frac{2(m-1-b)}{m}\pi i}\right)} < \Tilde{\sigma}\,.
    $$

To simplify notation, we introduce 
$\mu:\N\ra\Z_m\times\Z_m$, which denotes the cyclic behaviour, defined as 
$$
\mu(n):=
\begin{cases}
0\,b &\text{ if $n=0\mod 2m$}\\
(b+1)\,0 &\text{ if $n=1\mod 2m$}\\
1\,(b+1) &\text{ if $n=2\mod 2m$}\\
(b+2)\,1 &\text{ if $n=3\mod 2m$}\\
2\,(b+2) &\text{ if $n=4\mod 2m$}\\
.\\
.\\
.\\
0\,(m-1) &\text{ if $n=2m-1\mod 2m$}\\
0\,b &\text{ if $n=2m\mod 2m$}
\end{cases}
$$
Notice that for the short cycle, we can consider the following
$$
\mu(n)=
\begin{cases}
0\,b &\text{ if $n=0\mod m$}\\
(b+1)\,0 &\text{ if $n=1\mod m$}\\
1\,(b+1) &\text{ if $n=2\mod m$}\\
(b+2)\,1 &\text{ if $n=3\mod m$}\\
2\,(b+2) &\text{ if $n=4\mod m$}\\
.\\
.\\
.\\
0\,(m-1) &\text{ if $n=m-1\mod m$}\\
0\,b &\text{ if $n=m\mod m$}
\end{cases}
$$

Let $\sigma_n:=\left(\frac{n+1}{n+2}\right) \tan(\frac{\pi}{2m})$ and $R_n:=(\frac{\delta}{2})^{\frac{n}{2}} R_0$ for $R_0$ sufficiently large depending only on $\delta$. Notice that $\sigma_n \in \left(0, \tan(\frac{\pi}{2m})\right)$ far all $n\in \N$.
Set 
\begin{equation}\label{eq:W_n}
W_n:=\left(W_{\sigma_n,R_n,R_{n-1}}\right)_{\mu(n)}\,,
\end{equation}
and define
\begin{equation}\label{eq:W}
W:=\bigcup_{n\in\N} W_n \,.
\end{equation}
Observe that $W\subset S$, it is open and consists of $m^2$ connected components. 
We define $W_{ab} $ with $a,b \in \Z_m$, the component of $W$ contained in $S_{ab}$. 

\begin{prop}[Invariance of $W$]\label{prop: invariance W}
We have that $F(W_n)\subset W_{n+1}$. In particular, $W$ is forward invariant.
\end{prop}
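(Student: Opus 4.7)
\emph{Plan.} The strategy is a direct coordinate-by-coordinate verification. Writing $\mu(n) = (k_1,k_2)$, a short inspection of the case-split definition of $\mu$ — in both its generic $2m$-cycle form and the short $m$-cycle form — shows that $\mu(n+1) = (k_2+1,\,k_1)$. Since $F(z_0,w_0) = (z_1,w_1) = (f(z_0)+a\,w_0,\ z_0)$, what I must show for $(z_0,w_0) \in W_n$ is that $w_1 = z_0 \in (\mathcal{W}_{\sigma_{n+1},R_n})_{k_1}$ and $z_1 \in (\mathcal{W}_{\sigma_{n+1},R_{n+1}})_{k_2+1}$.

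The first of these is free. By hypothesis $z_0 \in (\mathcal{W}_{\sigma_n,R_n})_{k_1}$, and because $\sigma_n < \sigma_{n+1}$, passing to the new amplitude $\sigma_{n+1}$ only relaxes the bound $|\im(\cdot)|<\sigma\,\re(\cdot)$, while the real-part threshold $R_n$ is unchanged. So $z_0$ lies in the enlarged sector automatically.

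The claim for $z_1$ splits into an amplitude bound and a real-part bound. The amplitude bound is the content of the generic $S_{ab}$ version of Proposition~\ref{prop:im/re}, applied with $\sigma := \sigma_n$, $\tilde\sigma := \sigma_{n+1}$, and $b := k_2$: it yields $|\im(z_1 e^{2(m-k_2-1)\pi i/m})| < \sigma_{n+1}\,\re(z_1 e^{2(m-k_2-1)\pi i/m})$, provided $R_{n-1} > 2/(\delta(\sigma_{n+1}-\sigma_n))$. A short computation gives $\sigma_{n+1}-\sigma_n = \tan(\pi/2m)/((n+2)(n+3))$, so the required lower bound on $R_{n-1}$ is only polynomial in $n$, while $R_{n-1} = (\delta/2)^{(n-1)/2}R_0$ grows exponentially (using $\delta > 2$); hence the hypothesis holds uniformly in $n$ as soon as $R_0$ is chosen large enough depending only on $\delta$ and $m$ — this is precisely the role of the ``$R_0$ sufficiently large'' clause in the definition of $R_n$. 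For the real-part bound, rotating $z_1 = f(z_0) + \delta e^{2\pi i/m}w_0$ by $e^{2(m-k_2-1)\pi i/m}$ carries $a\,w_0$ into $\delta\,w_0\,e^{2(m-k_2)\pi i/m}$, whose real part exceeds $\delta R_{n-1}$ by the sector condition on $w_0$; combined with $|f(z_0)|<1$ from Lemma~\ref{lem:control of f}, this gives $\re(z_1\,e^{2(m-k_2-1)\pi i/m}) > \delta R_{n-1} - 1$. Since $R_{n+1} = (\delta/2)R_{n-1}$, it suffices to have $(\delta/2)R_{n-1} > 1$, which again holds for $R_0$ large.

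The main obstacle is purely bookkeeping: the index shuffling through $\mu$, and in particular checking uniformly across all case branches — including the short $m$-cycle when $m$ is odd — that $\mu(n+1) = (k_2+1,k_1)$. Once this is in place, the verification reduces to Lemma~\ref{lem:control of f}, the generic version of Proposition~\ref{prop:im/re}, and the exponential-versus-polynomial comparison between $R_{n-1}$ and $1/(\sigma_{n+1}-\sigma_n)$. Forward invariance of $W$ then follows immediately by taking the union over $n$.
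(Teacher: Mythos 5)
Your proof is correct and follows essentially the same route as the paper's: you check that $w_1 = z_0$ fits the relaxed amplitude for free, invoke the generic version of Proposition~\ref{prop:im/re} for the amplitude of $z_1$ with the exponential-versus-polynomial comparison $R_{n-1}\gg 1/(\sigma_{n+1}-\sigma_n)$, and reduce the real-part bound to $\delta R_{n-1}-1 > R_{n+1}$. The only cosmetic difference is that you derive the estimate $\re\!\left(z_1 e^{2(m-k_2-1)\pi i/m}\right) > \delta R_{n-1}-1$ directly, whereas the paper passes through inequality~(\ref{P in S}) and an auxiliary quantity $\lambda_n := R_{n+1}-R_{n-1}$; both reduce to the same requirement $(\delta/2)^{(n+1)/2}R_0>1$, i.e.\ $R_0\ge 1$.
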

This also implies that if $W_{ab}$ does not belong to the short cycle it is forward invariant under $F^{2m}$, otherwise it is forward invariant under $F^{m}$.

\begin{proof}
Let $(z_0, w_0)\in W_n$, with $n=0 \mod 2m$, the other cases are analogous, and let $(z_1,w_1)$ be its image. Notice that $w_1=z_0$, so $\re w_1=\re z_0>R_n$, and 
$$\frac{|\im w_1|}{\re w_1}=\frac{|\im z_0|}{\re z_0}<\sigma_{n}<\sigma_{n+1}\,\,,$$
hence to show that $F(W_n)\subset W_{n+1}$ it is enough to prove that
\begin{enumerate}
\item $\re \left(z_1 e^{\frac{2(m-1)}{m}\pi i}\right)>R_{n+1}$
\item $\frac{\left|\im \left(z_1 e^{\frac{2(m-1)}{m}\pi i}\right)\right|}{\re \left(z_1 e^{\frac{2(m-1)}{m}\pi i}\right)}<\sigma_{n+1}$.
\end{enumerate}
Let $\lambda_n:=R_{n+1}-R_{n-1}$. Since $(z_0,w_0)\in S$, from (\ref{P in S}) we have $\re \left(z_1 e^{\frac{2(m-1)}{m}\pi i}\right)>\re w_0+\lambda_n > R_{n-1}+\lambda_n=R_{n+1}$ provided $R_{n-1}>\frac{1+\lambda_n}{\delta-1}$. Substituting the expression for $\lambda_n$ we get $R_{n+1}<\delta R_{n-1}-1$. Substituting the expression for $R_{n+1}$ and $R_{n-1}$ we get 
$$
\delta^{\frac{n+1}{2}}R_0>2^{\frac{n+1}{2}}
$$
which is satisfied because $\delta>2$, provided $R_0\ge1$. This gives $\re \left(z_1 e^{\frac{2(m-1)}{m}\pi i}\right)>R_{n+1}$.\\

We now show $\frac{\left|\im \left(z_1 e^{\frac{2(m-1)}{m}\pi i}\right)\right|}{\re \left(z_1 e^{\frac{2(m-1)}{m}\pi i}\right)}<\sigma_{n+1}$.
In view of Proposition~\ref{prop:im/re}, it is enough to check that  $R_{n-1}>\frac{2}{\delta(\sigma_{n+1}-\sigma_n)}=\frac{2(n+2)(n+3)}{\delta} \tan(\frac{\pi}{2m})$, that is 
$$R_0>\frac{2^{\frac{n+1}{2}}}{\delta^{\frac{n+1}{2}}}(n+2)(n+3) \tan\left(\frac{\pi}{2m}\right)\,\,.$$ 
Since the function on the right hand side is bounded in $n$ for any $\delta>2$ (moreover, it tends to $0$ as $n\ra\infty$), such $R_0$ exists and depends only on $\delta$. 
\end{proof}

\subsection{Fatou components and rank 1 limit functions}

In this section, we establish that $W$ is contained in the Fatou set. This is achieved by leveraging Lemma~\ref{lem:forever in S1} in conjunction with the observation that $W$ possesses the characteristics of being non-empty, open, forward invariant, and contained in $S$. Additionally, we demonstrate that the functions $h_1$ and $h_2$, as defined in (\ref{eq:limit h1}) and (\ref{eq:limit h2}) respectively, have generic rank 1, and further, that $h_1 \ne h_2$.

\begin{prop} [Existence of Fatou components]\label{prop:existence of FC}
On each $ W_{ab}$ we have that 
$$
F^{2n}\ra h_1, F^{2n+1}\ra h_2 \text{ uniformly on compact subsets of  $ W_{ab}$}.
$$
It follows that each $W_{ab}$  is contained in a  Fatou component $\Omega_{ab}$. 
\end{prop}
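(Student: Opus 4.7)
The plan is to upgrade the pointwise statement of Lemma~\ref{lem:forever in S1} to uniform convergence on compact subsets of $W_{ab}$, using the fact that the bound in Lemma~\ref{lem:control of f} is \emph{independent of the starting point}. Proposition~\ref{prop: invariance W} guarantees that for every $P\in W$ every iterate $F^n(P)$ remains in $W\subset S$, so $|f(z_n)|<1$ uniformly in $P\in W$ and $n\in\N$. In particular, for every $N\in\N$,
$$
\Bigl|\sum_{j=N+1}^\infty a^{-j}f(z_{2j-1})\Bigr|\le\sum_{j=N+1}^\infty \delta^{-j},
$$
which tends to $0$ at a rate independent of $P$. By the Weierstrass $M$-test, the partial sums
$$
\Delta_1^{(N)}(z_0,w_0):=\sum_{j=1}^N a^{-j}f(z_{2j-1}),\qquad \Delta_2^{(N)}(z_0,w_0):=\sum_{j=1}^N a^{-j}f(z_{2j-2}),
$$
converge uniformly on $W$ to the functions $\Delta_1,\Delta_2$ of (\ref{eq:iterates1})--(\ref{eq:iterates2}). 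Each partial sum is a finite composition of holomorphic maps, hence holomorphic on $W_{ab}$, so uniform convergence gives holomorphicity of $\Delta_1$ and $\Delta_2$ on each $W_{ab}$.

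Next, I would read the iterates in homogeneous coordinates on $\P^2$. Factoring $a^n$ out of (\ref{eq:iteratesp}),
$$
F^{2n}(z_0,w_0)=\bigl[\,z_0+\Delta_1^{(n)}(z_0,w_0)\,:\,w_0+\Delta_2^{(n)}(z_0,w_0)\,:\,a^{-n}\,\bigr].
$$
Since $|a^{-n}|=\delta^{-n}\to 0$ and the first two coordinates converge uniformly on compact subsets of $W_{ab}$ by the previous paragraph, the sequence $F^{2n}$ tends, uniformly on compacts, to the holomorphic map
$$
W_{ab}\ni(z_0,w_0)\longmapsto \bigl[\,z_0+\Delta_1\,:\,w_0+\Delta_2\,:\,0\,\bigr]\in\ell_\infty,
$$
which in the affine chart where the second coordinate is normalised to $1$ is exactly the ratio $h_1$ of (\ref{eq:limit h1}). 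The analogous factorisation in (\ref{eq:iteratesd}) gives uniform convergence $F^{2n+1}\to h_2$ on compacts.

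To finish, any subsequence of $\{F^n\}$ contains either infinitely many even or infinitely many odd iterates, hence admits a further subsequence converging uniformly on compacts to $h_1$ or $h_2$. Thus $\{F^n\}$ is a normal family on $W_{ab}$, so $W_{ab}$ is contained in the Fatou set, and by connectedness it lies in a single Fatou component $\Omega_{ab}$.

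The main conceptual point — and there is no real obstacle here — is recognising that the uniformity we need is already packaged in the construction of $W$: forward invariance (Proposition~\ref{prop: invariance W}) gives $|f(z_n)|<1$ uniformly on $W$, and the absolute convergence of $\sum\delta^{-j}$ then provides Weierstrass-$M$ control with a point-independent bound. Once this is observed, convergence in $\P^2$ and normality are standard.
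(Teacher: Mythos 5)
Your argument is correct and follows essentially the same route as the paper: forward invariance of $W$ (Proposition~\ref{prop: invariance W}) confines all orbits to $S$, and then the series estimate of Lemma~\ref{lem:forever in S1} gives convergence of even and odd iterates. The paper's own proof is just two sentences and silently asserts the convergence is uniform on compacts; you fill that gap carefully via the point-independent tail bound $\sum_{j>N}\delta^{-j}$ and a pass to homogeneous coordinates on $\P^2$, which is exactly the right way to make the argument precise (one should also note, though it is implicit in your set-up, that $z_0+\Delta_1$ and $w_0+\Delta_2$ are bounded away from $0$ on compacts in $W_{ab}$ since $|z_0|,|w_0|>R_0$ and $|\Delta_i|<1$, so the limit is a well-defined point of $\ell_\infty$).
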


\begin{proof}
Points in each $W_{ab}$ never leave $S$ by Proposition~\ref{prop: invariance W}. Hence the even and odd iterates of $F$ converge according to Lemma~\ref{lem:forever in S1} on compact subsets of each  $W_{ab}$. Since each $W_{ab} $ is open and connected it is contained in a Fatou component $\Omega_{ab}$. 
\end{proof}

Notice that in Proposition~\ref{prop:existence of FC} we define $\Omega_{ab}$ to be the Fatou component containing $W_{ab}$ with $a,b \in \Z_m$. Let  $$\Omega:=\bigcup_{ab}\Omega_{ab},.$$
The following corollary is a direct consequence of Proposition~\ref{prop:existence of FC}.

\begin{cor}\label{cor:at most}
    The set $\Omega$ consists of at most $m^2$ connected components.
\end{cor}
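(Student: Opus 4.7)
The plan is essentially to observe that this corollary is a purely set-theoretic consequence of Proposition~\ref{prop:existence of FC}. There are exactly $m^2$ index pairs $(a,b)\in\Z_m\times\Z_m$, and for each such pair the set $W_{ab}$ lies in a Fatou component $\Omega_{ab}$, which by definition of Fatou component is connected. Hence $\Omega=\bigcup_{a,b\in\Z_m}\Omega_{ab}$ is expressed as a union of $m^2$ connected subsets of $\C^2$.

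From here I would invoke the standard topological fact that a union of $N$ connected sets has at most $N$ connected components: any connected component of the union must contain at least one of the $\Omega_{ab}$ entirely (since each $\Omega_{ab}$ is connected and therefore lies in a single component of the union), and the map sending a component $C$ of $\Omega$ to some fixed choice of $(a,b)$ with $\Omega_{ab}\subset C$ is well-defined and surjective onto the set of components of $\Omega$. Therefore the number of connected components of $\Omega$ is bounded above by $m^2$.

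There is no real obstacle here: the statement follows immediately by counting, and the corollary is only claimed as an upper bound. The actual substance lies in the possibility that several $\Omega_{ab}$ could coincide as Fatou components (in which case the number of components of $\Omega$ would be strictly less than $m^2$), or that all of them are distinct. Distinguishing these cases is precisely what the main theorem addresses when it asserts exactly $m^2$ distinct Fatou components arranged into the claimed cycles, but that statement is not needed for the corollary as stated. Thus the proof will be a one- or two-line argument: cite Proposition~\ref{prop:existence of FC} to get the $m^2$ connected sets $\Omega_{ab}$, and conclude that their union has at most $m^2$ connected components.
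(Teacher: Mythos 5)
Your argument is exactly the paper's intended reasoning: the paper states the corollary as a direct consequence of Proposition~\ref{prop:existence of FC} without further elaboration, and your counting argument is precisely the omitted justification.
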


Moreover we will see in Proposition~\ref{prop: four different FC} that the components $\Omega_{ab}$ are in fact all distinct, so $\Omega$ consists of exactly $m^2$ connected components.

We now show that $h_1,h_2$ are distinct and have generic rank 1. 
\begin{prop}\label{prop:h1 h2}
Both  $h_1$ and $h_2$ have (generic) rank $1$, and $h_1 \ne h_2$.
\end{prop}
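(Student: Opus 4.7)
The plan is to bound the rank of both limit functions from above using the fact that their images lie in $\ell_\infty$, and then to establish non-constancy of $h_1$ by a direct computation at two real test points inside $W_0$. The identity $h_1 h_2 = a$ coming from \eqref{eq:limit h2} will then immediately yield the generic rank of $h_2$ and the distinctness $h_1\neq h_2$.

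The upper bound is immediate: since $h_i(\Omega_{ab})\subset \ell_\infty$ and $\ell_\infty$ is a one-complex-dimensional submanifold of $\P^2$, the image of $dh_i$ at every point is contained in a $1$-dimensional tangent space, so the rank of each $h_i$ is at most $1$ everywhere. By analyticity, each $h_i$ is either constant on $\Omega_{ab}$ or has rank exactly $1$ on a dense open subset; thus it suffices to prove non-constancy.

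For non-constancy of $h_1$, focus first on $\Omega_{00}$. Pick $R$ much larger than $R_0$; then both test points $P_1=(R,R)$ and $P_2=(2R,R)$ lie in $W_0$, which is a product of convex sectors (hence connected), so both sit in a single connected piece of $W_{00}\subset\Omega_{00}$. Substituting into \eqref{eq:limit h1} and using $|\Delta_1|,|\Delta_2|<\Delta<1$ from Remark~\ref{rem: bounds on Delta in S}, one obtains
\begin{align*}
|h_1(P_1)-1| &\leq \frac{2\Delta}{R-\Delta},\\
|h_1(P_2)-2| &\leq \frac{3\Delta}{R-\Delta},
\end{align*}
so for $R$ large the two values differ (one near $1$, the other near $2$); hence $h_1$ is non-constant on $\Omega_{00}$ and therefore of generic rank $1$. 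The same argument applies verbatim on every other $\Omega_{ab}$ by replacing the test points with appropriately rotated ones inside the forward-invariant ray $A_{ab}$ of \eqref{eq:Aab}.

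By \eqref{eq:limit h2} we have $h_2=a/h_1$, so $h_2$ inherits non-constancy and hence generic rank $1$ from $h_1$. Finally, if $h_1\equiv h_2$ held on some $\Omega_{ab}$, then $h_1^2=h_1h_2=a$ would force $h_1\equiv\pm\sqrt{a}$ to be constant, contradicting the previous paragraph. The only real bookkeeping obstacle is checking that the specific test points respect the angular and real-part thresholds defining $W_0$; this is immediate because both points are positive real (so $\im=0<\sigma_0\re$) and $R$ can be taken much larger than both $R_0$ and $R_{-1}$.
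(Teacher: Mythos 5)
Your proposal is correct, and the overall skeleton (rank $\le 1$ because the image lies in $\ell_\infty$, then non-constancy, then $h_1 h_2 = a$ to get $h_1 \ne h_2$) is the same as the paper's. Where you genuinely diverge is in the non-constancy step. The paper argues by contradiction: assuming $|h_1| \equiv c$ it derives the estimate $|z_0| \le c|w_0| + (c+1)\Delta$ from the formulas \eqref{eq:limit h1}--\eqref{eq:Delta}, then invokes unboundedness of $W$ in the $z$-direction (with the endpoint cases $c = 0, \infty$ handled separately). You instead exhibit two explicit real test points $P_1 = (R,R)$ and $P_2 = (2R,R)$ in $W_0$ and show, via the bounds
\[
|h_1(P_1) - 1| < \tfrac{2\Delta}{R - \Delta}, \qquad |h_1(P_2) - 2| < \tfrac{3\Delta}{R - \Delta},
\]
that $h_1(P_1) \neq h_1(P_2)$ once $R$ is large. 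Both arguments are sound; yours is more concrete and in particular avoids having to treat $c = 0, \infty$ as special cases. The two minor things you take for granted that the paper spells out are: (i) the inclusion $h_i(\Omega_{ab}) \subset \ell_\infty$, which the paper deduces from Proposition~\ref{prop:come cresce re} together with the cited Lemma 2.4 of \cite{henon1} (your rank estimate needs this as an input, so it should be stated); and (ii) that the paper actually bounds the rank via Sard's theorem, whereas you use the more direct observation that $dh_i$ has image in the one-dimensional tangent space $T\ell_\infty$ --- your version is cleaner. The final deduction $h_1 \ne h_2$ from $h_1 h_2 = a$ is identical to the paper's.
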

 
\begin{proof}
 Notice that Proposition \ref{prop:come cresce re} implies that $h_i(W)$ is contained in the line at infinity and so, by Sard's Theorem, %by Proposition \ref{prop:come cresce re},  
$h_1$ and $h_2$ have generic rank at most 1. We now show that $h_1$ and $h_2$ are non-constant, so we can conclude that they have rank 1. Suppose by contradiction that 
 $|h_1|=c$ is constant.
 
If $c\ne 0, \infty$, then one has:
$$
|z_0|-\Delta\leq \left|z_0+\sum_{j=1}^\infty a^{-j}f(z_{2j-1})\right|=c\left|w_0+\sum_{j=1}^\infty a^{-j}f(z_{2j-2})\right|\leq c|w_0|+c\Delta,
$$
hence 
$$
|z_0|\leq c|w_0|+(c+1)\Delta,
$$

contradicting the fact that $(z_0,w_0)$ could be any point in   $W$, which is unbounded in the $z$ direction for any choice of $w$.\\
If $c=0$, we have $|z_0| \le \Delta$, while if  $c=\infty$,  we have $|w_0| \le \Delta$;  in either case we have a contradiction. 

This also implies that $h_1 \ne h_2$. Indeed, $h_1 \cdot h_2$ is constant, so if we had $h_1=h_2$ we would have that $h_1^2$ is constant and hence so is $h_1$. 
\end{proof}

\subsection{Construction of an absorbing set $W_I$ for $\Omega$}

This section is dedicated to the construction of an absorbing set $W_I$ for $\Omega$ under $F$ (Proposition \ref{prop:Absorbing WI}) and to do this we use the plurisubharmonic method (for references see \cite{FornaessShortCk}, \cite{henon1}, \cite{BSZ} and \cite{BBS}). 
This fact will be used in Section~\ref{sect: limt sets} to show that the Fatou components $\Omega_{ab}$ are all distinct and to describe both their limit sets and their geometric structure.

\begin{defn}
    Let $\Omega$ be an open set, a set $A\subset \Omega$ is absorbing for $\Omega$ under a map $F$ if for any compact set $K\subset \Omega$ there exists $N>0$ such that 
    $$F^n(K)\subset A \text{ for all } n\geq N.$$
\end{defn}
 Remember that in Proposition~\ref{prop:existence of FC} we define $\Omega_{ab}$ to be the Fatou component containing $W_{ab}$ with $a,b \in \Z_m$. And $$\Omega:=\bigcup_{ab}\Omega_{ab}.$$
 
 Fix $C\ge 1$ and let 
 $$
 I=I(C):=\{ z\in \C\,:\,\re (z^m) > C^m \} \subset \mathcal{S}.
 $$
 Notice that $I$ consists of $m$ connected component, each of which is contained in one of the $\mathcal{S}_k$, so we define $I_k$ the component of $I$ contained in $\mathcal{S}_k$, with $k\in \Z_m$.
 
 Notice that if $z\in I_k$, then $\re \left (z e^{\frac{2(m-k)}{m}\pi i} \right)> C$. Define the following subset of $S$
 \begin{equation}\label{eq:W_I}
W_I=W_{I}(C):=\{(z,w) \in \C^2: F^n(z,w)\in I\times I \text{ for all $n\in\N$}\}\cap\Omega\,\,
  \end{equation}
and let
$$
  \AA_{I}=\AA_{I}(C):=\bigcup_n F^{-n} (W_{I})\,\,.$$

%To simplify notation, we omit the dependence on $C$ and refer to the previously defined sets as $I$, $W_I$ and $\AA_I$.
Our next goal is to show that $W_I$ is an absorbing set for $\Omega$ under $F$, that is $\AA_I=\Omega$.

Since $W_I\subset S$, we define $(W_I)_{ab}$ the subset of $W_I$ contained in $S_{ab}$.  
Notice that $W_I$  is forward invariant by construction and that each $(W_I)_{ab}$ contains the set $A_{ab}$ already defined in~(\ref{eq:Aab}), for $M$ sufficiently large; hence that they are all not empty. It will turn out in Corollary~\ref{cor: w_I open} that $W_I$ is also an open set.

Since  $ W_I \subset S$ and forward invariant,    Proposition~\ref{prop:cicli} holds and hence the sets $(W_I)_{ab}$ are mapping to each other: $F\left( (W_I)_{ab} \right)\subset(W_I)_{(b+1)a}$.

Moreover $(W_I)_{ab}$ is forward invariant under $F^ {2m}$, in particular if $(W_I)_{ab}$ belongs to the short cycle, it is forward invariant under $F^ {m}$. By Lemma~\ref{lem:forever in S1} we have convergence of even and odd iterates of $F$ on $W_I$.

From now on the entire section is devoted to prove the following proposition:

\begin{prop}[$W_I$ is absorbing for $\Omega$]\label{prop:Absorbing WI}
The set $W_I$  is absorbing for $\Omega$ under $F$, that is, $\AA_I=\Omega$.
\end{prop}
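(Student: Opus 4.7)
The plan is to prove $\AA_I = \Omega$ by a connectedness argument component by component. The inclusion $\AA_I \subset \Omega$ is built into the definition of $W_I$ in (\ref{eq:W_I}). For the reverse, on each Fatou component $\Omega_{ab}$ I show that $\AA_I \cap \Omega_{ab}$ is non-empty, relatively open and relatively closed; connectedness of $\Omega_{ab}$ then forces $\AA_I \cap \Omega_{ab} = \Omega_{ab}$, and taking unions over $a,b \in \Z_m$ yields $\Omega \subset \AA_I$. Non-emptiness is already recorded in the paragraph after (\ref{eq:W_I}): the set $A_{ab}$ defined in (\ref{eq:Aab}) lies in $(W_I)_{ab}$ for $M$ sufficiently large, and also in $W_{ab} \subset \Omega_{ab}$. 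Openness of $\AA_I = \bigcup_n F^{-n}(W_I)$ reduces, via continuity of each $F^n$, to openness of $W_I$ itself, which is the content of the forthcoming Corollary~\ref{cor: w_I open}.

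The heart of the proof is relative closedness in $\Omega_{ab}$, for which I would follow the plurisubharmonic method of \cite{FornaessShortCk}, \cite{henon1}, \cite{BSZ}, \cite{BBS}. Fix $P_0 \in \Omega_{ab} \cap \overline{\AA_I}$ and choose a small polydisc neighborhood $\Delta \Subset \Omega_{ab}$ of $P_0$ meeting $\AA_I$, so that $F^N(\Delta)$ meets $W_I$ for some $N$. By Proposition~\ref{prop:existence of FC}, $F^{2mn} \to h_1$ uniformly on $\overline \Delta$ with $h_1(\Delta) \subset \ell_\infty$; in the affine chart $\{w \ne 0\}$ of $\P^2$ this translates into $z_{2mn}/w_{2mn} \to h_1$ and $1/w_{2mn} \to 0$ uniformly on $\Delta$, so both $|z_{2mn}|$ and $|w_{2mn}|$ tend to infinity uniformly. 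To upgrade this convergence towards $\ell_\infty$ into the sectorial inclusion $F^{2mn}(\Delta) \subset I \times I$ required by (\ref{eq:W_I}), the standard device is to introduce PSH functions $\psi_n$ on $\Omega_{ab}$ measuring the angular distance of $F^n(P)$ from the complement of the sector $S_{\mu(n)}$ (cycled by the map $\gamma$ of Section~2); by the forward invariance of $W_I$, the family $\psi_n$ is uniformly bounded below on the non-empty open set $\Delta \cap F^{-N}(W_I)$, and the sub-mean inequality for PSH functions together with a Hartogs-type argument propagates the bound to a fixed neighborhood $\Delta' \ni P_0$, forcing $F^{2mn}(\Delta') \subset W_I$ for all $n$ large, whence $P_0 \in \AA_I$.

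The main obstacle is controlling the \emph{arguments} (not merely the moduli) of the iterates $z_n, w_n$, so that they land in the correct sector $\mathcal{S}_{k_1} \times \mathcal{S}_{k_2}$ as permuted by $\gamma$. Once this sectorial control is in place, Proposition~\ref{prop: invariance W} takes over and keeps subsequent iterates inside $W$, hence eventually in $I \times I$. The plurisubharmonic machinery in the cited references is designed precisely to leverage the non-emptiness of $W_I$ inside any polydisc meeting $\AA_I$ into uniform sectorial control across the whole polydisc, closing the argument.
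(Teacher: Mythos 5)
Your proposal lands in the right family of ideas but diverges from the paper's proof in two ways that amount to genuine gaps rather than mere stylistic differences.

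First, there is a circularity problem. You invoke Corollary~\ref{cor: w_I open} (openness of $W_I$) to establish openness of $\AA_I$, which you need for the connectedness argument. But the paper proves Corollary~\ref{cor: w_I open} \emph{from} Proposition~\ref{prop:Absorbing WI}: its proof begins with ``Recall that $W_I$ is absorbing for $\Omega$.'' The set $W_I$ in (\ref{eq:W_I}) is a countable intersection of open sets intersected with $\Omega$, and is not open a priori; the absorbing property is precisely what rescues this. The paper sidesteps the issue entirely by not running an open/closed/non-empty decomposition at all: it fixes $P\in\Omega\setminus\AA_I$ and derives a contradiction outright via the mean-value property, so openness of $\AA_I$ is never needed.

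Second, and more importantly, the proposal leaves out the actual construction that makes the plurisubharmonic method close. The paper does not build separate angular-distance functions $\psi_n$ plus Hartogs propagation; it uses a single sequence of \emph{harmonic} (not merely PSH) functions $u_n(z_0,w_0):=-\re(z_n^m)/n$ (Lemma~\ref{lem:harmonic functions}). The point is that $-\re(z_n^m)=-|z_n|^m\cos(m\arg z_n)$ encodes modulus \emph{and} sectorial position at once: on compact subsets of $W$ the modulus grows like $\delta^{mn/2}$ and $\cos(m\tilde\theta_n)$ is bounded below by $T_m(U(n))\sim 1/n$ (Remark~\ref{rem:PolyC}), so $u_n\to-\infty$; if instead $P\notin\AA_I$ then $\re(z_n^m)$ fails to beat $\epsilon n$ along a subsequence, which means $u_{n_k}(P)\geq-\epsilon$; and the Corollary~2.3 estimate $|z_n|\le c(M+\epsilon)^n$ from \cite{BSZ} caps $u_n\le\log M$ on compacts of $\Omega$. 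The contradiction is then extracted by the \emph{equality} in the mean-value property on the boundary circle of the good disk of Lemma~\ref{lem:Good holo disks}, splitting $\partial D$ into a positive-measure part inside $W$ (where $u_N\le-\MM$) and the rest (where $u_N\le\log M$); letting $\MM\to\infty$ contradicts $u_N(P)\ge-\epsilon$. Your sketch identifies ``controlling the arguments'' as the main obstacle but offers only an appeal to ``the plurisubharmonic machinery'' to resolve it, which is precisely where the proof needs to produce a concrete function. Without a definite choice of $u_n$ and verification of its three properties, the sub-mean-value propagation you envision has nothing to propagate.

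A smaller point: you assert that $F^{2mn}\to h_1$ uniformly on $\overline\Delta\Subset\Omega_{ab}$ by Proposition~\ref{prop:existence of FC}. That proposition only proves convergence on $W_{ab}$; extending it to all of $\Omega_{ab}$ requires normality plus the identity principle (all subsequential limits agree on the open set $W_{ab}$, hence everywhere). This is fixable, but should be said explicitly, since the purpose of the proposition you are trying to prove is precisely to upgrade what is known on $W$ to statements on $\Omega$.
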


Define
$$\mathcal{X}:=\{ (z,w)\in \Omega \,:\,h_1(z,w)=0,\infty \}\,\,,$$
and observe that since $\mathcal X$ is an analytic set, being the union of the $0$-set and the $\infty$-set of a meromorphic function, it is locally a finite union of $1$-complex-dimensional varieties.

Let $K$ be a compact subset of $\Omega \setminus\mathcal{X}$, that is $h_1(P)\ne 0,\infty$ for all $P\in K$. We can define the quantities 
\begin{equation}
\begin{split}
M&:=\max_{K}(\max(|h_1|,|h_2|))<\infty\\
m&:=\min_{K}(\min(|h_1|,|h_2|))>0.\\
\end{split}\nonumber
\end{equation}
Note that  $M>1$ because $|h_2|=\frac{\delta}{|h_1|}$ and $\delta>2$. 
By Corollary 2.3 in \cite{BSZ}, if $0<\epsilon<m$ there exists a constant $c$ such that for every $(z_0,w_0)\in K$, 
 \begin{equation}\label{eqtn:sandwich growth}
 |z_n|\leq c(M+\epsilon)^n.
 \end{equation}
 Recall that $w_n=z_{n-1}$,hence 
 \begin{equation}\label{eqtn:sandwich growth2}
  |w_n|\leq c(M+\epsilon)^{n-1}.
 \end{equation}

Now consider the following two remarks, the first one is certainly well-known: for a proof, please refer to Appendix of \cite{BBS}. Given a set $L$, we denote its interior with $\mathring{L}$.
\begin{rem}\label{rem:estensione}
    Let $L$ be a compact set and $H$ be an analytic subset of dimension one of $\C^2$. For any compact $K$ such that $K\subset \mathring{L}$ there exists $\eta=\eta(K,L,H)$ such that for any $u$ harmonic defined in a neighborhood of $L$ and such that 
$$
u\leq\alpha<\infty \text{ on $L\setminus( \eta-$neighborhood of $H$) }
$$
we have 
$$
u\leq\alpha \text{ on $K$ .}
$$
\end{rem}

\begin{rem}\label{rem:PolyC}
Recall that $\cos(m \alpha)=T_m\left(\cos(\alpha)\right)$, with $m\in \N$, where $T_m$ are the Chebyshev polynomials of the first kind defined as
$$T_m(x)=\sum_{h=0}^{\left[\frac{m}{2}\right]} (-1)^h \binom{m}{2h} x^{m-2h} (1-x^2)^h ,$$
where $\left[\frac{m}{2}\right]$ is the integer part of $\frac{m}{2}$.
\end{rem}

The proof of Proposition~\ref{prop:Absorbing WI} relies on the following technical lemma. Recall that for a point $P=(z_0,w_0)$, we define $(z_n,w_n):=F^n(P)$.

\begin{lem}\label{lem:harmonic functions} 
Define the sequence of harmonic functions $u_n$ from  $\Omega$ to $\R$ as $u_n(z_0,w_0):=\frac{-\Re (z_n^m)}{n}$. Then 
\begin{enumerate}
\item Let $K\subset\Omega$ be a compact set, then there exists $M=M(K)$ and $N \in \mathbb N$ such that $u_n\leq \log M$ on $K$ for $n>N$.;
\item $u_n\ra -\infty$  uniformly on compact subsets of  $W$; 
\item If $P\in \Omega\setminus \AA_I$, then for all $\epsilon>0$ there exists a subsequence $n_k\longrightarrow \infty$ such that $u_{n_k}(P)\ge -\epsilon$.
 \end{enumerate}
\end{lem}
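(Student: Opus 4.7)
Since $f(z)=e^{-z^m}$, the identity $\log|f(z_n)|=-\Re(z_n^m)=n\,u_n$ says that (1) amounts to a uniform upper bound on $|f(z_n)|^{1/n}$, (2) to the statement $|f(z_n)|^{1/n}\to 0$ on $W$, and (3) to producing a subsequence along which $|f(z_{n_k})|^{1/n_k}$ is close to $1$ from below. The Hénon recurrence $z_{k+1}=f(z_k)+a\,z_{k-1}$ gives
$$f(z_n)=z_{n+1}-a\,z_{n-1},\qquad |f(z_n)|\le |z_{n+1}|+\delta\,|z_{n-1}|\qquad(n\ge 1),$$
so I can reduce each item to information on the coordinate sequence $z_n$ itself.

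For (1), let $K\subset\Omega$ be compact. I fix a compact $L\subset\Omega$ with $K\subset\mathring{L}$ and let $\eta=\eta(K,L,\mathcal X)$ be as in Remark~\ref{rem:estensione}. On the compact set $L':=L\setminus(\eta\text{-nbhd of }\mathcal X)\subset\Omega\setminus\mathcal X$ both $h_1$ and $h_2$ are bounded, so I set $M_K:=\max_{L'}\max(|h_1|,|h_2|)$. For any $\epsilon>0$, (\ref{eqtn:sandwich growth}) yields $|z_n|\le c(M_K+\epsilon)^n$ on $L'$, and combined with the recurrence this gives
$$|f(z_n)|\le c(M_K+\epsilon)^{n+1}+\delta c(M_K+\epsilon)^{n-1}\le C(M_K+\epsilon)^{n},$$
so $u_n\le \log(M_K+2\epsilon)$ on $L'$ for $n$ large. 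Since $u_n$ is harmonic on $\Omega$, Remark~\ref{rem:estensione} transfers this bound across $\mathcal X$ to all of $K$.

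For (2), by Proposition~\ref{prop: invariance W} every $P\in W$ satisfies $F^n(P)\in W_n$. Writing $\mu(n)=(k_n,l_n)$ and $\zeta_n:=z_n e^{\frac{2(m-k_n)}{m}\pi i}$, the sector conditions give $\Re\zeta_n>R_n$ and $|\arg\zeta_n|<\arctan\sigma_n<\pi/(2m)$, while $\zeta_n^m=z_n^m$ since the twist is an $m$-th root of unity. Writing $\zeta_n=r_n e^{i\phi_n}$,
$$\Re(z_n^m)=r_n^m\cos(m\phi_n)\;\ge\;R_n^m\cos(m\arctan\sigma_n).$$
Now $R_n^m=R_0^m(\delta/2)^{nm/2}$ grows exponentially (as $\delta>2$) and, because $\sigma_n=\tfrac{n+1}{n+2}\tan(\pi/(2m))$, a short expansion gives $\cos(m\arctan\sigma_n)\asymp 1/n$. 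Hence $u_n(P)\le -R_0^m(\delta/2)^{nm/2}\cdot c/n^2\to-\infty$, and the bound is uniform in $P\in W$.

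For (3), $P\notin\mathcal A_I$ is equivalent to saying the forward orbit of $P$ is not eventually contained in $I\times I$; thus there is $n_k\to\infty$ with $F^{n_k}(P)\notin I\times I$, and after passing to a further subsequence I may assume $z_{n_k}\notin I$, i.e.\ $\Re(z_{n_k}^m)\le C^m$. Then $u_{n_k}(P)\ge -C^m/n_k$, which is $\ge -\epsilon$ as soon as $n_k\ge C^m/\epsilon$. The main technical step is the extension in (1): the exponential bound (\ref{eqtn:sandwich growth}) only applies on $\Omega\setminus\mathcal X$ (where $h_1,h_2$ are finite), so it is essential that Remark~\ref{rem:estensione} lets the bound on the harmonic function $u_n$ be pushed across the $1$-dimensional analytic set $\mathcal X$ with no loss in the constant.
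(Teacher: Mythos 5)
Your proof is correct and follows essentially the same line of reasoning as the paper for parts (1) and (3): in (1) you use the growth estimate (\ref{eqtn:sandwich growth}) together with the recurrence $f(z_n)=z_{n+1}-az_{n-1}$ and then push the bound across the analytic set $\mathcal{X}$ via Remark~\ref{rem:estensione} (you just phrase it directly via $\log|f(z_n)|=nu_n$ where the paper argues by contradiction), and in (3) both arguments amount to the observation that $P\notin\AA_I$ forces a subsequence of $\Re(z_{n_k}^m)$ to stay below $C^m$.

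In part (2) you take a mildly different and somewhat cleaner route. The paper extracts the radial growth $|z_n|^m\ge\delta^{mn/2}$ from the explicit iterate formulas (\ref{eq:iteratesp})--(\ref{eq:iteratesd}) and controls the angular factor via the Chebyshev polynomials $T_m$ introduced in Remark~\ref{rem:PolyC}. You instead read the radial growth directly off the defining inequality $\Re\zeta_n>R_{n+j}$ of $W_{n+j}$ (where $j$ is the fixed index with $K\subset W_j$), and you estimate the angular factor by the elementary expansion $\cos(m\arctan\sigma_{n+j})\asymp 1/n$, which makes explicit what the paper leaves implicit when it says ``the dominating term is $\delta^{mn/2}$.'' The only imprecision is your claim that ``every $P\in W$ satisfies $F^n(P)\in W_n$'' and the ensuing ``uniform in $P\in W$'': one has $F^n(P)\in W_{n+j}$ with $j=j(P)$, and uniformity is only on compact subsets (which is all that is claimed), so you should fix the indexing as the paper does by first choosing $j$ with $K\subset W_j$.
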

We will later show that such a $P\in \Omega\setminus \AA_I$ leads to a contradiction.

\begin{proof}
\begin{enumerate}
\item 

Let $K\subset \Omega$ compact. Let $\eta$ as in Remark~\ref{rem:estensione} applied to a slightly larger compact set $L\subset\Omega$ and to the analytic set $\mathcal{X}$. Let $U_\eta(\mathcal{X})$ be an $\eta$-neighborhood of $\mathcal{X}$.
Because of Remark~\ref{rem:estensione} it is enough to show that there exists $M$ and $N\in \N$ such that $u_n\le \log M$  for $n>N$ on the set
$$
K\setminus U_\eta(\mathcal{X})
$$
which is  a compact subset of $\Omega\setminus \mathcal{X}$. Hence it is enough to prove the claim for any compact subset $K$ of $\Omega\setminus \mathcal{X}$. \\

Fix $\epsilon \in (0,m)$  and let $c$ as in (\ref{eqtn:sandwich growth}) and (\ref{eqtn:sandwich growth2}). Suppose that there exists a subsequence $(n_j)$ and points $(z,w)=(z(j), w(j))\in K$  such that
$$-\frac{\re({z}_{n_j}^m)}{n_j}>\beta$$
for some $\beta$. We will show that $\beta\leq M$.\\ 

We have that
\begin{align*}
    |z_{n_{j+1}}|&=|e^{-z_{n_j}^m}+\delta e^{\frac{2 \pi}{m}i} w_{n_j}| \ge |e^{-z_{n_j}^m}|-\delta|w_{n_j}|\ge \\
    &\ge e^{-\re(z_{n_j}^m)}-\delta c(M+\epsilon)^{n_j-1} \ge \\
    &\ge e^{\beta n_j}-\delta c (M+\epsilon)^{n_{j}-1}\,\,\,.
    \end{align*}
Furthermore 
$$|z_{n_{j+1}}|\le c(M+\epsilon)^{n_j+1}\,\,\,.$$
Then we have 
$$e^{\beta n_j}-\delta c (M+\epsilon)^{n_{j}-1}\le c(M+\epsilon)^{n_j+1} \,\,\,,$$
that is
\begin{align*}
   e^{\beta n_j}\le \delta c (M+\epsilon)^{n_{j}-1}+ c(M+\epsilon)^{n_j+1}\,\,\,.
\end{align*}
Since $M>1$ and $\epsilon >0$, we have that $(M+\epsilon)>1$ and hence
\begin{align*}
    e^{\beta n_j}< \delta c (M+\epsilon)^{n_{j}+1}+ c(M+\epsilon)^{n_j+1}=c(\delta +1)(M+\epsilon)^{n_j+1}\,\,\,.
\end{align*}
Then
$$\beta < \frac{\log\big(c(\delta +1)\big)}{n_j}+\frac{n_j+1}{n_j}\,\, \log(M+\epsilon)$$
which implies, using $n_j\longrightarrow \infty$ and $\epsilon \longrightarrow 0$, that $\beta \le \log M$.

\item 
Let $K$ be a compact subset of $W$, since $W$ is forward invariant, $F^n(K)\subset W$ for all $n\in \N$. Moreover there exist $j \in \N$ such that $K \subset W_{j}$ defined in (\ref{eq:W_n}) and, by Proposition~\ref{prop: invariance W}, we have that $F^n(K)\subset W_{n+j}$.
Let $P=(z_0,w_0)\in K \subset W_j$ and observe that $z_n^m \in \H^+$, so $\re (z_n^m)>0$, hence our goal is to prove that 
$$
\frac{\re (z_n^m)}{n}= \frac{\left|\re (z_n^m)\right|}{n} \longrightarrow \infty \,\,\,\,\,\,\,\,\,\text{for } n\rightarrow \infty  .
$$
Since we are interested in $|\re (z_n^m)|$, we can consider $\tilde{z}_n=z_n\,e^{\frac{2(m-j-n)}{m}\pi i}=|z_n| e^{i \tilde{\theta}_n}$  (with $-\frac{\pi}{2m}<\tilde{\theta}_n<\frac{\pi}{2m}$) instead of $z_n$, as 
$$
\left|\re (z_n^m)\right|=\left|\re(\tilde{z}_n^m)\right|=\left|z_n\right|^m \, \left|\cos (m \tilde{\theta}_n)\right| .
$$

Denote by $\alpha_n$ the angle such that $\left|\tan (\alpha_n) \right|= C \,\,\frac{j+n+1}{j+n+2}$ with $C=\tan(\frac{\pi}{2m})$,
and using (\ref{eq:W_n}), we have that $\cos(\tilde{\theta}_n)> \cos(\alpha_n) $ and  

\begin{equation}\label{eq:mtheta}
    \cos(m\tilde{\theta}_n)> \cos(m\alpha_n).
\end{equation}
It is easy to check that
$$  \left| \cos (\alpha_n) \right|=\frac{j+n+1}{\sqrt{C^2 (n+j+1)^2+(n+j+2)^2}} =:U(n)
$$
and by Remark \ref{rem:PolyC}, 
\begin{equation}\label{eq:PC}
 \left| \cos (m \alpha_n) \right|=T_m(U(n)) .
\end{equation}

Using the explicit expressions of the iterates of $F$, that is equations (\ref{eq:iteratesp}) and (\ref{eq:iteratesd}), and the fact that $\delta>2$,
we have that 
$$
\left| z_n\right|^m \ge
\begin{cases}
\delta^\frac{mn}{2} \left| z_0-1 \right|^m &\text{ if $n$ is even}\\
\delta^\frac{mn}{2} \left| w_0-1 \right|^m &\text{ if $n$ is odd} .
\end{cases} 
$$
Since $|z_0|>R_0>2$, we have that $|z_0-1|\ge |z_0|-1\ge 1$ and the same holds true for $w_0$, hence 
\begin{equation}\label{eq:z^m}
    |z_n|^m\ge \delta^\frac{mn}{2}.
\end{equation}
By (\ref{eq:mtheta}), (\ref{eq:PC}) and (\ref{eq:z^m}), we conclude
$$
\frac{\left|\re (z_n)^m\right|}{n}=\frac{|z_n|^m |\cos(m \tilde{\theta_n})|}{n}\ge \delta^{\frac{mn}{2}}\frac{T_m(U(n))}{n}\longrightarrow\infty
$$
since the dominating term is $\delta^{\frac{mn}{2}}$ and $\delta>2$.

\item 
 Let $P=(z_0,w_0)\in \Omega\setminus \AA_I$, and suppose by contradiction that there is $\epsilon>0$ and $N\in \mathbb{N}$ s.t.
$$u_n(P) < - \epsilon\,\,\,\,\,\,\,\,\,\,\,\,\forall\,\,n\ge N\,\,.$$ 
So we have that
$$-\frac{\re(z_n^m)}{n} < - \epsilon\,\,\,\,\,\,\,\,\,\,\,\,\forall\,\,n\ge N\,\,,$$
that is
$$\re(z_n^m)> \epsilon \,n\,\,\,\,\,\,\,\,\,\,\,\,\forall\,\,n\ge N\,\,.$$
Since $\epsilon>0$ we have that there exists $N'>N$ such that 
$$\re(z_n^m)> C^m \,\,\,\,\,\,\,\,\,\,\,\,\forall\,\,n\ge N'\,\,,$$
where $C$ is the constant fixed in (\ref{eq:W_I}).

Since $w_n=z_{n-1}$ and since $P\in \Omega$ for hypothesis, we have that $F^n(P)=(z_n,w_n)\in W_I$ $\forall\,\,n\ge N'$, so $P=(z_0,w_0)\in F^{-n}(W_I)\subset \AA_I$, hence the contradiction.
\end{enumerate}
\end{proof}
%Assume by contradiction that there is $P\in \Omega\setminus\AA_I$, and consider the following lemma.
Now consider the following lemma, here $\D\subset \C$ is an open unit disk.
\begin{lem}[Good holomorphic disks]\label{lem:Good holo disks}
Let $P\in \Omega$, than there exists $\phi:\overline{\D}\ra \Omega$ holomorphic such that
\begin{itemize}
\item $\phi(0)=P$.
\item $D:=\phi(\D)\Subset\Omega$ and $\partial D$ is analytic.
\item The one-dimensional Lebesgue measure of $\partial \phi(\D)$ intersected with $W$ is bigger than $0$. 
%, for some $r<1$.\footnote{\color{orange}{Ho cambiato leggermente il testo, perché è questo che ci serve.}\violet{Non ho capito ma ci credo, me lo spiegherai!}}
\end{itemize}
\end{lem}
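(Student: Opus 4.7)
The idea is to connect $P$ to a point $Q$ lying in one of the $W_{ab}$ in the same connected component of $\Omega$ as $P$ by a holomorphic disk through $P$ that stays inside $\Omega$. Once the image disk contains a point of $W$ on its boundary, openness of $W$ will automatically promote that point to an open boundary arc, hence to a set of positive one-dimensional Lebesgue measure.

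Let $C$ be the connected component of $\Omega$ containing $P$; by Proposition~\ref{prop:existence of FC} it contains some $W_{ab}$. Pick $Q\in W_{ab}$ and a smooth path $\gamma:[0,1]\to C$ with $\gamma(0)=P$, $\gamma(1)=Q$, and set $\rho:=\dist(\gamma([0,1]),\C^2\setminus C)>0$ by compactness. By Weierstrass approximation (applied coordinatewise), I approximate $\gamma$ uniformly on $[0,1]$ by a polynomial map and then subtract the constant $\tilde p(0)-P$ to obtain a polynomial $p:\C\to\C^2$ with $p(0)=P$, $\|p-\gamma\|_{L^\infty([0,1])}<\rho/4$, and (refining the approximation and using openness of $W_{ab}$) with $p(1)\in W_{ab}$.

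Next I thicken $[0,1]$ to the open ellipse $\Lambda_\epsilon:=\{z\in\C:|z|+|z-1|<1+\epsilon\}$, which is simply connected, has real-analytic boundary, and contains both $0$ and $1$ in its interior. For $\epsilon>0$ small, uniform continuity of $p$ forces $\dist(p(z),\gamma([0,1]))<\rho$ for every $z\in\overline{\Lambda_\epsilon}$, so $p(\overline{\Lambda_\epsilon})\subset C\subset\Omega$. Let $\psi:\D\to\Lambda_\epsilon$ be a Riemann map normalized by $\psi(0)=0$; since $\partial\Lambda_\epsilon$ is real-analytic, the Schwarz reflection principle extends $\psi$ biholomorphically to a neighborhood of $\overline{\D}$. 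Set $\phi:=p\circ\psi$. After a small perturbation of $\epsilon$ if necessary (so that none of the finitely many critical points of the polynomial $p$ lies on $\partial\Lambda_\epsilon$, ensuring $\phi'$ does not vanish on $\partial\D$), $\phi$ is a holomorphic embedding of a neighborhood of $\overline{\D}$ into $\Omega$ with $\phi(0)=P$, $D:=\phi(\D)$ satisfying $\overline{D}\Subset\Omega$, and $\partial D=\phi(\partial\D)$ a real-analytic curve.

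For the boundary condition, let $z_*\in\partial\D$ be the $\psi$-preimage of the right vertex $1+\epsilon/2\in\partial\Lambda_\epsilon$. Then $\phi(z_*)=p(1+\epsilon/2)$ is within $O(\epsilon)$ of $p(1)\in W_{ab}$, hence lies in $W_{ab}$ for $\epsilon$ small enough by openness. Continuity of $\phi$ then gives an open arc of $\partial\D$ around $z_*$ whose image sits in $W_{ab}\subset W$, and any such arc has positive one-dimensional Lebesgue measure. The only delicate step is the quantitative choice of $\epsilon$: it must be small enough that both $p(\overline{\Lambda_\epsilon})\subset\Omega$ and $p(1+\epsilon/2)\in W_{ab}$, both of which follow from the uniform continuity of the polynomial $p$ and openness of $W_{ab}$.
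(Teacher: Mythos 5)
Your proof is correct and takes essentially the same route as the paper: connect $P$ to a point $Q\in W_{ab}$ inside the Fatou component, promote the arc to a holomorphic disk, and place (a point near) $Q$ on its boundary so that openness of $W$ gives a positive-measure boundary arc. The paper compresses the ``promotion'' step into the single phrase ``complexifying this simple real analytic curve and precomposing with a Möbius transformation,'' whereas you carry it out explicitly via polynomial approximation of a smooth path, thickening $[0,1]$ to an ellipse $\Lambda_\epsilon$ with real-analytic boundary, and a Riemann map extended by Schwarz reflection. Your version is noticeably more careful on the point the paper elides: the paper just asserts that $\phi(\D)\cap W\neq\emptyset$ suffices, while you arrange explicitly that a boundary point (the image of $1+\epsilon/2\in\partial\Lambda_\epsilon$) lands in $W_{ab}$, which is the fact actually needed to get a positive-measure boundary arc. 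One minor remark: you claim $\phi$ is a holomorphic \emph{embedding} after perturbing $\epsilon$ to avoid critical points of $p$ on $\partial\Lambda_\epsilon$; nonvanishing of $\phi'$ gives an immersion, not injectivity, so $\partial D$ could a priori self-intersect. This does not affect the lemma (which only asks that $\partial D$ be analytic, and which is only used through the mean value property on $\partial\D$), but the wording ``embedding'' is slightly stronger than what you have verified.
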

\begin{proof}
Since $W$ is open it is enough to take $\phi(\D)\cap W\neq\emptyset$ to get positive one- dimensional Lebesgue measure of $\partial\phi(\D)\cap W$. %We can do this because $W$ intersects every connected component of $\Omega\setminus \mathcal{X}$, hence regardless of where $P$ is, there is some $W$ contained in that component. \\
Let $P\in \Omega_{ab}$, with $a,b\in \Z_m$. Since $W_{ab}$ is not empty for all $a,b\in\Z_m$ there exists $Q\in W_{ab}$. Moreover $\Omega_{ab} $ is open and connected, so there exists a simple real analytic curve in $\Omega_{ab}$ passing through $P$ and $Q$. Complexifying this curve we get a holomorphic disk passing through $P$ that we can write as $\phi(\D)$ for some $\phi$ holomorphic defined in a neighborhood of $\D$.  Up to precomposing $\phi$ with a Moebius transformation we can assume that $P=\phi(0)$. 

%Up to considering $\phi$ restricted to a slighly smaller disk $r\D$ and rescaling, we can ensure that $\partial D$ is analytic. 
\end{proof}

We recall the mean value property for harmonic functions.
\begin{rem} [Mean value property] Let $\D \subset \C$ be an open unit disk and $\phi:\overline{\D}\ra \Omega$ a  holomorphic map. Let $u$ be harmonic on $D=\phi({\D})$ and continuous up to the boundary of $D$.  Let $P_0:=\phi(0)$, then 
$$
u(P_0)=\frac1{2\pi}\int_{\partial \D}u(\zeta)|\phi'(\zeta)|^{-1}d\zeta .
$$
\end{rem}

%Notice that precomposing a harmonic function with a holomorphic one we obtain a harmonic function, hence the Mean value property holds for $u_n\circ \phi$ on $D=\phi(\D)$.

\begin{proof}[Proof of Proposition~\ref{prop:Absorbing WI}]

Let $P\in \Omega \setminus \AA_I$ and  $D:=\phi(\D) $ as in Lemma~\ref{lem:Good holo disks}.  Let $\mu$ be the pushforward  under  $\phi$ of the one-dimensional Lebesgue measure on $\partial \D$. Let $K \subset W$ compact such that $\mu (\partial D \cap K)$ is strictly positive.  

Let $\mu_{\text{good}}=\mu(\partial D \cap K)>0$ and $\mu_{\text{bad}}=\mu(\partial D\cap (\Omega \setminus K))$. Since $D\subset \Omega$, than $\partial D=(\partial D\cap K )\cup (\partial D\cap (\Omega\setminus K))$, moreover $K$ is compact and $\Omega$ is  open, than all these sets are measurable.  

By Lemma~\ref{lem:harmonic functions} for any $\MM>0$ there exists $N$ such that $u_N\leq -\MM$ on $K$, $u_N(P)\geq -\epsilon$ for  some $\epsilon>0$  since $P\in \Omega\setminus \AA_I$, and $u_N\leq \log M$  on  $\ov{D}$ (with $M=M(\ov{D})$). Using the Mean value property we have 
\begin{align*}
-\epsilon\leq u_N(P)&=\frac{1}{2\pi}\int_{\partial D} u_N(\zeta)|\phi'(\zeta)|d\zeta=\frac{1}{2 \pi}\int_{\partial D\cap K} u_N(\zeta)|\phi'(\zeta)|d\zeta+\frac{1}{2\pi}\int_{ \partial D\cap (\Omega \setminus K) } u_N(\zeta)|\phi'(\zeta)|d\zeta\leq\\
&\leq \frac{1}{2\pi} \left(-\MM \mu_{\text{good}}+ \log M\mu_{\text{bad}}\right)\cdot \sup_{\partial\D}|\phi'|^{-1}. 
\end{align*}
Since $\MM$ is arbitrarily large, this gives  a contradiction. 
\end{proof}

As a corollary of Proposition~\ref{prop:Absorbing WI} we obtain what follows.
\begin{cor}\label{cor: w_I open}
$W_I$ is an open set.
\end{cor}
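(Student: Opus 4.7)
The plan is to deduce openness of $W_I$ from the absorbing property just established in Proposition~\ref{prop:Absorbing WI}, which reduces what looks like an intersection of countably many open conditions to an intersection of only finitely many. Specifically, $W_I = \{(z,w)\in\Omega : F^n(z,w)\in I\times I \text{ for all } n\in\N\}$, and while $I\times I$ is open and each $F^n$ is continuous, a naive intersection over all $n\in\N$ need not be open. The absorbing property will let us cut off the tail.

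Concretely, let $P\in W_I$. Since $\Omega$ is open and $P\in\Omega$, pick a relatively compact open neighborhood $V$ of $P$ with $\overline{V}\subset\Omega$. By Proposition~\ref{prop:Absorbing WI}, $W_I$ is absorbing for $\Omega$ under $F$, so there exists $N\in\N$ such that
\[
F^n(\overline{V})\subset W_I\subset I\times I\quad\text{for all }n\geq N.
\]
For the finitely many indices $0\leq n<N$, we have $F^n(P)\in I\times I$, which is open; by continuity of each $F^n$ there exist open neighborhoods $V_n$ of $P$ with $F^n(V_n)\subset I\times I$. Set
\[
U:=V\cap V_0\cap V_1\cap\cdots\cap V_{N-1}.
\]
Then $U$ is open, $P\in U\subset\Omega$, and $F^n(U)\subset I\times I$ for every $n\in\N$ (by construction for $n<N$, and by the absorbing property together with $U\subset\overline{V}$ for $n\geq N$). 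Hence $U\subset W_I$, showing that $W_I$ is open at $P$; since $P$ was arbitrary, $W_I$ is open.

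There is no real obstacle here beyond making the key observation: without Proposition~\ref{prop:Absorbing WI} the definition of $W_I$ would only present it as a countable intersection of preimages of the open set $I\times I$ (which in general is only a $G_\delta$, not necessarily open). The absorbing statement precisely provides the uniform $N$ that lets us truncate this intersection at a finite stage, after which standard continuity arguments close the case.
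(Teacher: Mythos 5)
Your proof is correct and follows essentially the same route as the paper's: use the absorbing property from Proposition~\ref{prop:Absorbing WI} to obtain a uniform $N$ truncating the infinite intersection, handle the finitely many initial indices by continuity of each $F^n$, and take the resulting finite intersection of open neighborhoods. The only cosmetic difference is that the paper chooses the initial neighborhood $U_P$ already compactly contained in $\Omega\cap(I\times I)$ (absorbing the $n=0$ condition into the choice of $U_P$), whereas you add a separate $V_0$; this changes nothing of substance.
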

\begin{proof}
Let $P \in W_I$, our goal is to find an open neighborhood $U$ of $P$ such that $U\subset W_I$.
Since $W_I\subset \Omega \cap (I\times I)$ which is open, there exist an open neighborhood $U_P$ of $P$ compactly contained in $\Omega \cap (I\times I)$. Recall that $W_I$ is absorbing for $\Omega$, then
\begin{equation}\label{eq:W_I open}
\exists\,\,N>0\,\,\,\,\,\, \text{such that}\,\,\,\,\,\,F^n(\overline{U_P})\subset W_I\,\,\,\,\,\,\forall\,n\ge N\,\,.
\end{equation}

As usual let $P_j:=F^j(P)$ and notice that by definition of $W_I$, we have that $P_j\in W_I \subset I\times I$, which is an open set. Hence there is an open neighborhood $U_j$ of $P_j$ such that  $U_j \subset I \times I$.

Define 
$$U:=\bigcap_{j=1}^N F^{-j}(U_j) \cap U_P\,\,,$$
it is clear that $P\in U$ and $U$ is an open set since it is a finite intersection of open sets. We only need to prove that $U\subset W_I$. \\
Notice that $U\subset U_P$, hence $U$ is in the Fatou set and moreover $U\subset I \times I$. So we only need to check that $F^j(U)\subset I \times I$ for all $j \ge 0$. If $j\ge N$, this is true by (\ref{eq:W_I open}); while if $j< N$, this is true by definition since $F^j(U)\subset U_j \subset I \times I$. 
\end{proof}

\subsection{Limit sets and geometric structure of $\Omega$}\label{sect: limt sets}

We first study the  image of $(W_I)_{ab}$  under $h_1,h_2$ and then use the fact that $W_I$ is   absorbing for  $\Omega$ to understand $h_1(\Omega_{ab})$ and $h_1(\Omega_{ab})$. Moreover we show that $\Omega$ consists of $m^2$ connected components $\Omega_{ab}$, each of which is biholomorphic to $\H \times \H$.

Define the following $m$ open slices of $\C$ defined in terms of angles, all of amplitude $\frac{2 \pi}{m}$:\\

\begin{equation}\label{eq:U_j}
\begin{split}
U_0&:=\left(-\frac{\pi}{m} , \frac{\pi}{m} \right)\\
U_1&:=\left(\frac{\pi}{m} , \frac{\pi}{m}+\frac{2\pi}{m} \right)\\
U_2&:=\left(\frac{\pi}{m}+\frac{2\pi}{m} , \frac{\pi}{m}+2\frac{2\pi}{m} \right)\\
.&\\
.&\\
U_j&:=\left(\frac{\pi}{m}+(j-1)\frac{2\pi}{m} , \frac{\pi}{m}+j\frac{2\pi}{m} \right)\\
.&\\
.&\\
U_{m-1}&:=\left(\frac{\pi}{m}+(m-2)\frac{2\pi}{m} , -\frac{\pi}{m} \right)
\end{split}
\end{equation}

Observe that 
$$\C=\bigcup_{j\in \Z_m}\overline{U}_j\,.$$
Notice that if $(z,w)\in (W_I)_{ab}$, the ratio $\frac{z}{w}\in U_{a-b}$ with $a-b$ $\mod m$.
Remember that for each $(W_I)_{ab}$, we can take $(W_I)_{0\, (b-a)}$ as the representative of the cycle. With this in mind we consider the following lemma.
 
 \begin{lem}[Limit set for $W_I$]\label{lem:limit set for WI}
 Let $U_J$ defined as in (\ref{eq:U_j}), with $j\in \Z_m$. Then
 $$ 
 h_1\left((W_I)_{0b}\right)\subseteq U_{m-b}\,\,\,\,\text{and}\,\,\,\, h_2\left((W_I)_{0b}\right)\subseteq U_{b+1} \, ,\,\,\,\,\text{ if $b\neq \frac{m-1}{2}$}
 $$
and
 $$
 h_1\left((W_I)_{0b}\right),h_2\left((W_I)_{0b}\right)\subseteq U_{\frac{m+1}{2}}\, ,\,\,\,\,\text{ if $b=\frac{m-1}{2}$}.
 $$
 \end{lem}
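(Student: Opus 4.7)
The proof runs in three steps.

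\emph{Step 1 (cyclic a priori inclusion).} Fix $(z_0,w_0)\in(W_I)_{0b}$. Iterating Proposition~\ref{prop: invariance W} yields $F^{2n}(z_0,w_0)\in(W_I)_{n,b+n}\subset \mathcal{S}_n\times\mathcal{S}_{b+n}$ (indices mod $m$), whence
$\arg z_{2n}\in\frac{2n\pi}{m}+(-\frac{\pi}{2m},\frac{\pi}{2m})$ and $\arg w_{2n}\in\frac{2(b+n)\pi}{m}+(-\frac{\pi}{2m},\frac{\pi}{2m})$. Subtracting,
$\arg(z_{2n}/w_{2n})\in -\frac{2b\pi}{m}+(-\frac{\pi}{m},\frac{\pi}{m})\pmod{2\pi}$, which is exactly the slice $U_{m-b}$, for every $n\ge 0$. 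By (\ref{eq:limit h1}) and Lemma~\ref{lem:forever in S1}, $z_{2n}/w_{2n}\to h_1(z_0,w_0)$, giving the a priori containment $h_1(z_0,w_0)\in\overline{U_{m-b}}$.

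\emph{Step 2 (strictness).} To upgrade from the closed to the open slice I use the closed form $h_1=(z_0+\Delta_1)/(w_0+\Delta_2)$ from (\ref{eq:limit h1}), combined with the sharpened bound available in $W_I$: every iterate satisfies $\re(z_n^m)>C^m$, so $|f(z_n)|=e^{-\re z_n^m}<e^{-C^m}$, hence $|\Delta_1|,|\Delta_2|\le e^{-C^m}\Delta$. Together with $|z_0|,|w_0|\ge C$ (both follow from $z_0\in I_0$, $w_0\in I_b$), this makes $h_1$ arbitrarily close in modulus and argument to $z_0/w_0$, whose argument lies strictly inside $U_{m-b}$ (the $n=0$ case of Step 1, which only uses $(z_0,w_0)\in\mathcal{S}_0\times\mathcal{S}_b$). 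Choosing $C$ large enough then forces $h_1(z_0,w_0)\in U_{m-b}$.

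\emph{Step 3 ($h_2$ and the short cycle).} From (\ref{eq:limit h2}), $h_2=a/h_1=\delta e^{2\pi i/m}/h_1$, so $\arg h_2=\frac{2\pi}{m}-\arg h_1\pmod{2\pi}$; plugging in Step 2 gives $\arg h_2\in \frac{2(b+1)\pi}{m}+(-\frac{\pi}{m},\frac{\pi}{m})\pmod{2\pi}=U_{b+1}$. When $b=\frac{m-1}{2}$ (possible only for $m$ odd), one has $m-b=b+1=\frac{m+1}{2}$, so $U_{m-b}=U_{b+1}=U_{(m+1)/2}$ and both $h_1$ and $h_2$ land in this common slice, which is the second half of the claim.

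The main obstacle is the strictness in Step 2, specifically the boundary regime $\arg z_0\to\pm\frac{\pi}{2m}$, where $z_0/w_0$ approaches $\partial U_{m-b}$: the $I$-constraint $\re(z_0^m)>C^m$ simultaneously forces $|z_0|\to\infty$, so the relative perturbation $|\Delta_i/z_0|$ shrinks, but the quantitative balance between the rate of shrinking and the loss of margin from the boundary is what one must verify in order to conclude that $h_1$ stays uniformly off $\partial U_{m-b}$ on all of $(W_I)_{0b}$.
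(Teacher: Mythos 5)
Your Step 1 reproduces the paper's argument exactly: the cycling of the $(W_I)_{ab}$ gives $z_{2n}/w_{2n}\in U_{m-b}$ for all $n$, hence $h_1\in\overline{U}_{m-b}$ in the limit, and symmetrically for $h_2$; the short-cycle case is just $m-b=b+1$. Where you diverge is Step 2, and the quantitative argument you sketch there does not close the gap you correctly flag at the end. The issue is precisely the balance you identify: write $\epsilon_1=\frac{\pi}{2m}-\arg z_0$ and $\epsilon_2=\arg w_0-(\frac{2b\pi}{m}-\frac{\pi}{2m})$ so that the distance of $\arg(z_0/w_0)$ to $\partial U_{m-b}$ is $\epsilon_1+\epsilon_2$. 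The constraint $\re(z_0^m)>C^m$ gives $|z_0|^m\sin(m\epsilon_1)>C^m$, so $|z_0|\gtrsim C\,\epsilon_1^{-1/m}$ (and similarly for $w_0$). Hence the angular perturbation from $z_0/w_0$ to $h_1=(z_0+\Delta_1)/(w_0+\Delta_2)$ is bounded only by roughly $e^{-C^m}\Delta\,(\epsilon_1^{1/m}+\epsilon_2^{1/m})/C$, which for small $\epsilon_i$ is of order $\epsilon_i^{1/m}\gg\epsilon_i$. No choice of $C$ makes this $\le\epsilon_1+\epsilon_2$ uniformly over $(W_I)_{0b}$: as you approach $\partial U_{m-b}$ the permitted perturbation shrinks faster than the bound, so the argument cannot rule out $h_1$ landing on the boundary rays (and indeed values of $h_1$ \emph{do} accumulate on $\partial U_{m-b}$, so no uniform margin exists).

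The paper avoids this entirely by a qualitative argument: $W_I$ is open (Corollary~\ref{cor: w_I open}), $h_1$ and $h_2$ are non-constant holomorphic maps into $\P^1$ (generic rank $1$, Proposition~\ref{prop:h1 h2}), so $h_i((W_I)_{0b})$ is open by the open mapping theorem. An open subset of $\overline{U}_j$ is necessarily contained in $U_j$, which upgrades your Step~1 containment from closed to open with no quantitative estimate at all. Your Step~3 (deducing $h_2$ from $h_2=a/h_1$ and handling the short cycle) is fine and agrees with the paper. Also a small reference slip: the cycling of $(W_I)_{ab}$ follows from Proposition~\ref{prop:cicli} together with the forward invariance of $W_I$, not from Proposition~\ref{prop: invariance W} (which concerns $W$, not $W_I$).
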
 

\begin{proof}
Remember that $h_1(z_0,w_0)=\lim_{n\ra\infty}\frac{z_{2n}}{w_{2n}}$ and $h_2(z_0,w_0)=\lim_{n\ra\infty}\frac{z_{2n+1}}{w_{2n+1}}$. 
Hence if $(z_0,w_0)\in (W_I)_{0b}$, with $b\ne \frac{m-1}{2}$, then $\frac{z_{2n}}{w_{2n}}\in U_{m-b}$ and $\frac{z_{2n+1}}{w_{2n+1}}\in U_{b+1}$. Taking the limit we get $h_1(z_0,w_0)\in \overline{U}_{m-b}$ and $h_2(z_0,w_0)\in \overline{U}_{b+1}$.

If $m-b=b+1$, that is if $b=\frac{m-1}{2}$, we have that $\frac{z_{k}}{w_{k}}\in U_{\frac{m+1}{2}}$ and taking the limit we get $h_1(z_0,w_0) , h_2(z_0,w_0)\in \overline{U}_{\frac{m+1}{2}}$.\\
Since $W_I$ is open by Corollary~\ref{cor: w_I open} its image under a holomorphic map of maximal rank is open, hence we can replace each $\overline{U_j}$ by $U_j$.

%{\red Later after showing that $W_I^{++}$ is absorbing for $\Omega_++$,  since holomorphic maps are open, we get that $h_1(W_I^{++})=h_1(\Omega_{++})$ is open hence  contained in the open right half plane. 
%We now want to show that it is equal to it. {\violet use the fact that at least some  orbits drift towards the boundary.}
%The other cases are analogous.}

%Either use Rouche' as usual or one can try a more refined version by using that the modulus of the  real parts  tends to infinity steadily, hence the image is exactly a half plane. (Every orbit has a point with arbitrarily large modulus of real part for both z and w, hence the error wrt to the function $h_0$ are arbitrarily small).
\end{proof}

%\begin{proof}
%We want to show that $F(W_I^{++})\subset W_I^{-+}$, the other cases are analogous.\\
%Let $(z_0,w_0)\in W_I^{++}$, so $\re (z_0)>0$ and $\re(w_0)>0$. By definition of $W_I$, we have that $F(z_0,w_0)\in W_I$, that is $F(z_0,w_0)\in W_I^{ab}$ for some $a\,,\,b\in \{+,-\}$. \\
%Remember that $\delta >2$ and that if $(z_0,w_0)\in W_I$, then $|\re z_0|,\,|\re w_0| \ge 1$. Hence 
%$$\re (z_1)=-\delta \re(w_0)+\re(e^{-z_0^2})< -\delta \re(w_0)+ |e^{-z_0^2}|\le - \delta + 1 < -1\,\,.$$ 

%And $\re (w_1)=\re (z_0)>0$, so $(z_1,w_1)\in W_I^{-+}$
%\end{proof}

%\begin{proof}
%Since $W_I$ is $F$-invariant, we only need to see in which component $W_I^{ab}$ each point will map. 
%Should be the same as for $W^{ab}$, simply that the $f$ term is neglectable hence the sign of $z$ becomes the sign of the new  $w$ and the sign of the new $z$ is opposite to the sign of the old $w$ . Do we want to state the estimates again? Or make a more general lemma in the very beginning of the section about invariance?
%\end{proof}
To better understand Lemma~\ref{lem:limit set for WI}, let us consider two examples: $m = 5 $ and $m = 6$. In the following examples, to simplify notation, instead of writing $(W_I)_{ab}$ in the first column, we simply write $ab$.

$$\begin{displaystyle}
m=5\,\,\,\,\,\,\,\,\,\,\,\,\,\,\,\,\,\,
\begin{cases}
  (z,w) & \frac{z}{w}\,\,\,\,\,\,\textit{iteration}\\
00 & U_0 \,\,\,\,\,\,\,\,\,\,\,\, 0\\
10 & U_1 \,\,\,\,\,\,\,\,\,\,\,\, 1\\
11 & U_0 \,\,\,\,\,\,\,\,\,\,\,\, 2\\
21 & U_1 \,\,\,\,\,\,\,\,\,\,\,\, 3\\
22 & U_0 \,\,\,\,\,\,\,\,\,\,\,\, 4\\
32 & U_1 \,\,\,\,\,\,\,\,\,\,\,\, 5\\
33 & U_0 \,\,\,\,\,\,\,\,\,\,\,\, 6\\
43 & U_1 \,\,\,\,\,\,\,\,\,\,\,\, 7\\
44 & U_0 \,\,\,\,\,\,\,\,\,\,\,\, 8\\
04 & U_1 \,\,\,\,\,\,\,\,\,\,\,\, 9\\
00 & U_0 \,\,\,\,\,\,\,\,\,\,\,\, 10
\end{cases}
\,\,\,\,\,\,\,\,\,\,\,\,\,\,\,\,\,\,
\begin{cases}
(z,w) & \frac{z}{w}\,\,\,\,\,\,\textit{iteration}\\
01 & U_4 \,\,\,\,\,\,\,\,\,\,\,\, 0\\
20 & U_2 \,\,\,\,\,\,\,\,\,\,\,\, 1\\
12 & U_4 \,\,\,\,\,\,\,\,\,\,\,\, 2\\
31 & U_2 \,\,\,\,\,\,\,\,\,\,\,\, 3\\
23 & U_4 \,\,\,\,\,\,\,\,\,\,\,\, 4\\
42 & U_2 \,\,\,\,\,\,\,\,\,\,\,\, 5\\
34 & U_4 \,\,\,\,\,\,\,\,\,\,\,\, 6\\
03 & U_2 \,\,\,\,\,\,\,\,\,\,\,\, 7\\
40 & U_4 \,\,\,\,\,\,\,\,\,\,\,\, 8\\
14 & U_2 \,\,\,\,\,\,\,\,\,\,\,\, 9\\
01 & U_4 \,\,\,\,\,\,\,\,\,\,\,\, 10
  \end{cases}
  \,\,\,\,\,\,\,\,\,\,\,\,\,\,\,\,\,\,
\begin{cases}
(z,w) & \frac{z}{w}\,\,\,\,\,\,\textit{iteration}\\
02 & U_3 \,\,\,\,\,\,\,\,\,\,\,\, 0\\
30 & U_3 \,\,\,\,\,\,\,\,\,\,\,\, 1\\
13 & U_3 \,\,\,\,\,\,\,\,\,\,\,\, 2\\
41 & U_3 \,\,\,\,\,\,\,\,\,\,\,\, 3\\
24 & U_3 \,\,\,\,\,\,\,\,\,\,\,\, 4\\
02 & U_3 \,\,\,\,\,\,\,\,\,\,\,\, 5\\    \end{cases}
\end{displaystyle}$$

$$\begin{displaystyle}
m=6\,\,\,\,\,\,\,\,\,\,\,\,\,\,\,\,\,\,
  \begin{cases}
(z,w) & \frac{z}{w}\,\,\,\,\,\,\textit{iteration}\\  
00 & U_0 \,\,\,\,\,\,\,\,\,\,\,\, 0\\
10 & U_1 \,\,\,\,\,\,\,\,\,\,\,\, 1\\
11 & U_0 \,\,\,\,\,\,\,\,\,\,\,\, 2\\
21 & U_1 \,\,\,\,\,\,\,\,\,\,\,\, 3\\
22 & U_0 \,\,\,\,\,\,\,\,\,\,\,\, 4\\
32 & U_1 \,\,\,\,\,\,\,\,\,\,\,\, 5\\
33 & U_0 \,\,\,\,\,\,\,\,\,\,\,\, 6\\
43 & U_1 \,\,\,\,\,\,\,\,\,\,\,\, 7\\
44 & U_0 \,\,\,\,\,\,\,\,\,\,\,\, 8\\
54 & U_1 \,\,\,\,\,\,\,\,\,\,\,\, 9\\
55 & U_0 \,\,\,\,\,\,\,\,\,\,\,\, 10\\
05 & U_1 \,\,\,\,\,\,\,\,\,\,\,\, 11\\
00 & U_0 \,\,\,\,\,\,\,\,\,\,\,\, 12
\end{cases}
\,\,\,\,\,\,\,\,\,\,\,\,\,\,\,\,\,\,
\begin{cases}
(z,w) & \frac{z}{w}\,\,\,\,\,\,\textit{iteration}\\
01 & U_5 \,\,\,\,\,\,\,\,\,\,\,\, 0\\
20 & U_2 \,\,\,\,\,\,\,\,\,\,\,\, 1\\
12 & U_5 \,\,\,\,\,\,\,\,\,\,\,\, 2\\
31 & U_2 \,\,\,\,\,\,\,\,\,\,\,\, 3\\
23 & U_5 \,\,\,\,\,\,\,\,\,\,\,\, 4\\
42 & U_2 \,\,\,\,\,\,\,\,\,\,\,\, 5\\
34 & U_5 \,\,\,\,\,\,\,\,\,\,\,\, 6\\
53 & U_2 \,\,\,\,\,\,\,\,\,\,\,\, 7\\
45 & U_5 \,\,\,\,\,\,\,\,\,\,\,\, 8\\
04 & U_2 \,\,\,\,\,\,\,\,\,\,\,\, 9\\
50 & U_5 \,\,\,\,\,\,\,\,\,\,\,\, 10\\
15 & U_2 \,\,\,\,\,\,\,\,\,\,\,\, 11\\
01 & U_5 \,\,\,\,\,\,\,\,\,\,\,\, 12
  \end{cases}
  \,\,\,\,\,\,\,\,\,\,\,\,\,\,\,\,\,\,
\begin{cases}
(z,w) & \frac{z}{w}\,\,\,\,\,\,\textit{iteration}\\
02 & U_4 \,\,\,\,\,\,\,\,\,\,\,\, 0\\
30 & U_3 \,\,\,\,\,\,\,\,\,\,\,\, 1\\
13 & U_4 \,\,\,\,\,\,\,\,\,\,\,\, 2\\
41 & U_3 \,\,\,\,\,\,\,\,\,\,\,\, 3\\
24 & U_4 \,\,\,\,\,\,\,\,\,\,\,\, 4\\
52 & U_3 \,\,\,\,\,\,\,\,\,\,\,\, 5\\
35 & U_4 \,\,\,\,\,\,\,\,\,\,\,\, 6\\
03 & U_3 \,\,\,\,\,\,\,\,\,\,\,\, 7\\
40 & U_4 \,\,\,\,\,\,\,\,\,\,\,\, 8\\
14 & U_3 \,\,\,\,\,\,\,\,\,\,\,\, 9\\
51 & U_4 \,\,\,\,\,\,\,\,\,\,\,\, 10\\
25 & U_3 \,\,\,\,\,\,\,\,\,\,\,\, 11\\
02 & U_4 \,\,\,\,\,\,\,\,\,\,\,\, 12
  \end{cases}
\end{displaystyle}$$

\begin{figure}[h!]
\centering
\includegraphics[width=0.25\textwidth]{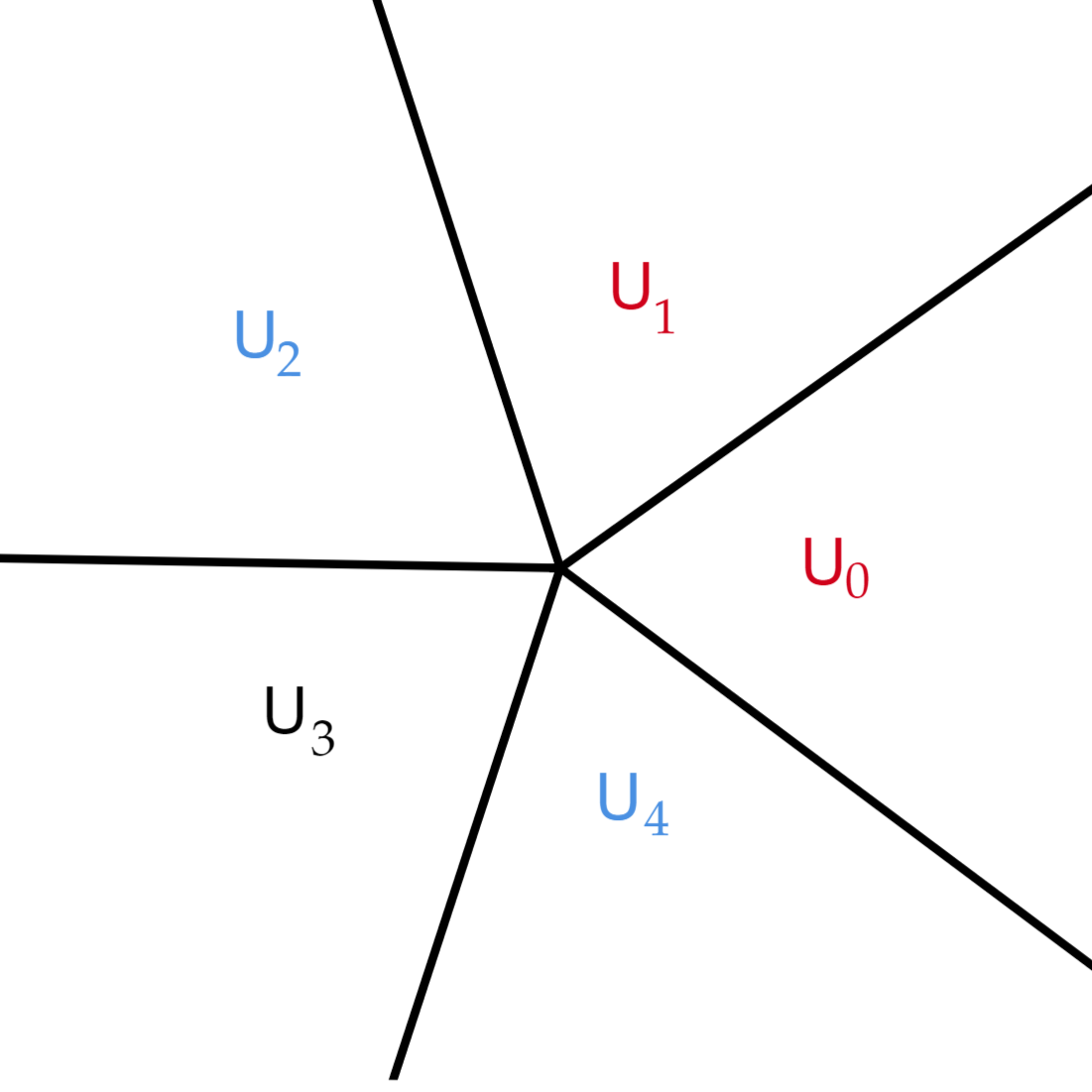} \quad \,\,\,\,\,\,\,\,\,\,\includegraphics[width=0.25\textwidth]{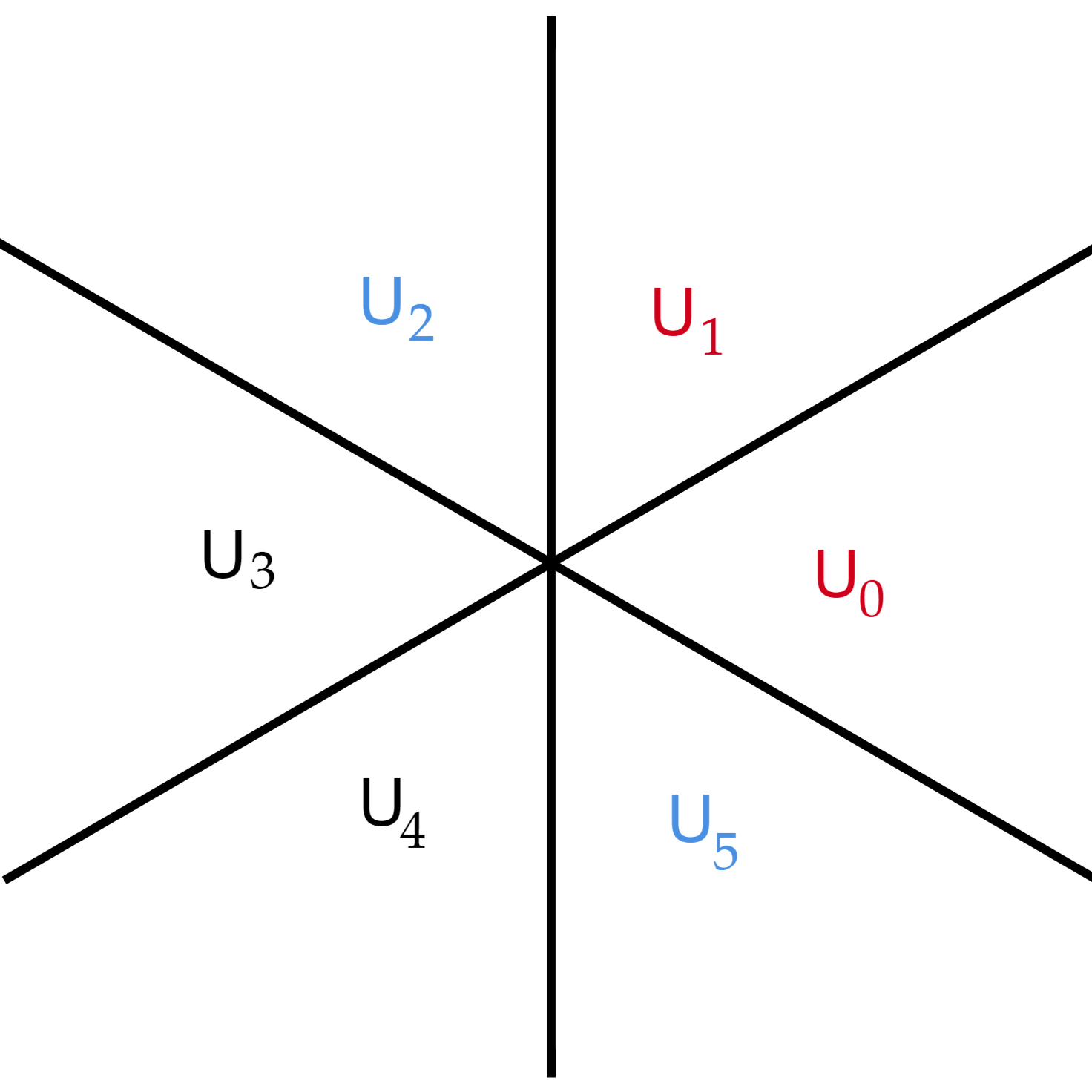}
\caption{case $m=5$ e $m=6$ on the line at infinity without the point at infinity: $\P^1\setminus \{\infty\}$ }
\label{fig:m=5,6}
\end{figure}

To better understand, see Figure 1 and observe that the components of $W_I$ belonging to the same cycle are mapped, under $h_1$ and $h_2$, into two distinct sectors $U_j$ and $U_k$, such that $j+k=1 \mod m$; with the exception of the components of $W_I$ that belong to the short cycle (this only occurs in the case of odd $m$), in which case they are mapped, under $h_1$ and $h_2$, into the same sector $U_{\frac{m+1}{2}}$.\\

Consider the following proposition in which we show the conjugacy $\phi$ between $F$
and its linear part $L$ on $\Omega$, and then in the remark we estimate the distance between
the conjugacy and the identity map.

 \begin{prop}[Conjugacy]\label{prop:conjugacy} $F$ is conjugate to the linear map $L(z,w)=(\delta e^{\frac{2 \pi}{m}i} w,z)$ on the set $\Omega$ throught a biholomorphism $\phi$. 
 \end{prop}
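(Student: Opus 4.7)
The plan is to define the conjugating map as the pointwise limit
\[
\phi(z_0,w_0) := \lim_{n\to\infty}\phi_n(z_0,w_0), \qquad \phi_n := L^{-n}\circ F^n,
\]
and to first analyse it on $W$. Since $L^{-2n}(z,w)=(a^{-n}z,a^{-n}w)$ and $L^{-(2n+1)}(z,w)=(a^{-n}w,a^{-(n+1)}z)$, applying these to the explicit iterate formulas \eqref{eq:iteratesp}--\eqref{eq:iteratesd} yields
\[
L^{-2n}\circ F^{2n}(z_0,w_0) = \Bigl(z_0+\sum_{j=1}^n a^{-j}f(z_{2j-1}),\; w_0+\sum_{j=1}^n a^{-j}f(z_{2j-2})\Bigr),
\]
together with an entirely analogous expression for odd iterates. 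By Remark~\ref{rem: bounds on Delta in S} the partial sums converge uniformly on compact subsets of $W$, and the even and odd subsequences share the common limit $\phi(z_0,w_0) = (z_0 + \Delta_1(z_0,w_0),\, w_0 + \Delta_2(z_0,w_0))$ on $W$.

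To extend convergence to all of $\Omega$ I would invoke Proposition~\ref{prop:Absorbing WI}: for any compact $K\subset\Omega$ there exists $N$ with $F^N(K)\subset W_I\subset W$, so for $n>N$ the factorisation $\phi_n|_K = L^{-N}\circ\phi_{n-N}\circ F^N|_K$ transfers the convergence already obtained on the compact set $F^N(K)\subset W$ to $K$. Hence $\phi$ is a well-defined holomorphic map on $\Omega$. The conjugacy identity $\phi\circ F = L\circ\phi$ then follows at once from $\phi_n\circ F = L^{-n}\circ F^{n+1} = L\circ\phi_{n+1}$ by passing to the limit in $n$.

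For bijectivity I would rely on a multi-dimensional Hurwitz-type principle: since each $\phi_n$ is injective (as $F$ is an automorphism and $L$ a linear isomorphism) and $\phi_n\to\phi$ locally uniformly, the limit $\phi$ is, on each connected component of $\Omega$, either injective or has identically vanishing Jacobian determinant. On $W$ we have $\phi = \mathrm{id} + (\Delta_1,\Delta_2)$ with $|\Delta_1|,|\Delta_2|<\Delta<1$, so Cauchy estimates on polydiscs of arbitrarily large radius contained in $W$ (centred, say, at points of $A_{ab}$ with very large real parts) force $d(\Delta_1,\Delta_2)$ to be arbitrarily small at such deep base points, whence $d\phi$ is non-degenerate there and $\phi|_{\Omega_{ab}}$ is injective. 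To rule out identifications between distinct components, assume $\phi(P)=\phi(P')$ with $P\in\Omega_{ab}$, $P'\in\Omega_{a'b'}$ and $(a,b)\ne(a',b')$: the conjugacy gives $\phi(F^n(P))=\phi(F^n(P'))$ for all $n$, and since the cyclic map $\gamma$ is a bijection of $\Z_m\times\Z_m$, for $n$ large $F^n(P)$ and $F^n(P')$ lie deep inside distinct sectors $(W_I)_{cd}$ and $(W_I)_{c'd'}$, whose images under the near-identity $\phi$ are disjoint. The inverse function theorem then upgrades $\phi$ to a biholomorphism $\phi\colon\Omega\to\phi(\Omega)$. The main technical hurdle is making the Cauchy-estimate arguments quantitative enough to secure both the non-degeneracy of $d\phi$ at a chosen base point and the disjointness of the images of distinct sectors $(W_I)_{cd}$ under $\phi$.
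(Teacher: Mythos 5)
Your overall plan coincides with the paper's: define $\phi_n = L^{-n}\circ F^n$, take the limit $\phi = \mathrm{id} + (\Delta_1,\Delta_2)$ on an absorbing set, verify the functional equation, establish injectivity via a multi-dimensional Hurwitz argument, and then propagate to $\Omega$ using Proposition~\ref{prop:Absorbing WI}. The genuine difference is in how you certify that the limit is nondegenerate. You propose Cauchy estimates on polydiscs of large radius centred near $A_{ab}$ deep in $W$, to squeeze $d(\Delta_1,\Delta_2)$ to zero at selected base points; the paper instead exploits the global bound $\|\phi-\mathrm{id}\|<\sqrt{2}$ on the unbounded set $W_I$ (see (\ref{eq:norma})): if $\phi$ had rank $\le 1$ its image would be at most one-dimensional and $\|\phi-\mathrm{id}\|$ could not stay bounded on an unbounded two-dimensional set. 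Both arguments are valid, but the paper's is slicker since it requires no Cauchy estimates and yields openness of $\phi$ at once; yours is a bit heavier because one must actually verify that arbitrarily large polydiscs fit inside $W$ (they do, since the sectors widen linearly at depth, but this should be spelled out).

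You also treat separately the possibility that $\phi$ identifies points of distinct $\Omega_{ab}$, invoking the conjugacy and the disjointness of the images of the $(W_I)_{cd}$ under the near-identity $\phi$. That is correct, and it fills a point the paper leaves implicit; the paper avoids it by first getting a biholomorphism $\phi\colon W_I\to\phi(W_I)$ (where Hurwitz applies componentwise and the components of $W_I$ are separated by more than $2\sqrt{2}$ because $\re(z^m)>C^m$ pushes them far apart, so the $\sqrt{2}$-bound already keeps the images disjoint) and then defines $\phi$ on $\Omega$ by $\phi(P)=L^{-k}\circ\phi\circ F^k(P)$, so global injectivity on $\Omega$ follows from injectivity on $W_I$ plus injectivity of $F$ and $L$, without any new case analysis. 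Your route works but is a bit more entangled; for a write-up it would be simpler to fix $\phi$ on $W_I$ first (as the paper does), and then transport everything by the dynamics in one step.
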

 \begin{proof}
Let $a=\delta e^{\frac{2 \pi}{m}i}$, it is easy to show that $L^{-n}(z,w)=(\frac{z}{a^{n/2}},\frac{w}{a^{n/2}})$ if $n$ is even and $L^{-n}(z,w)=(\frac{w}{a^{(n-1)/2}},\frac{z}{a^{(n+1)/2}})$ if $n$ is odd.
%hence by  a direct computation,  using the fact that $\delta>2$,  for any $n\in\N$ and for any $P\in \C^2$ we have that  $\|L^{-n}(P)\|\leq \delta^{-\frac{n-1}{2}}\|P\|$.  
 
 Let $\phi_n:\C^2\ra\C^2$ be the automorphisms defined as
  $$
\phi_n:=L^{-n}\circ F^n. 
 $$
We first show that $F$ is conjugate to $L$ on $W_I$.\\
Our goal is to prove that  $\phi_n$ converge to a map $\phi:\C^2\ra\C^2$   uniformly on  $W_I$ so we obtain that $\phi_n$ satisfy the functional equation $\phi_{n+1}=L^{-1}\circ\phi_n\circ F$, and so the map $\phi$ is a conjugacy between $F$ and $L$.

 Using the explicit expressions for the iterates of $F$, we compute 
 \begin{align}
 \phi_{2k}(z,w)&=\left(z+\sum_{j=1}^k a^{-j}f(z_{2j-1}),w+\sum_{j=1}^k a^{-j}f(z_{2j-2})\right),\\
\phi_{2k+1}(z,w)&=\left(z+\sum_{j=1}^k a^{-j}f(z_{2j-1}),w+\sum_{j=1}^{k+1} a^{-j}f(z_{2j-2})\right),
\end{align}
 and taking the limit we obtain, using the definitions (\ref{eq:iterates1}) and (\ref{eq:iterates2}),
 $$
 \phi (z,w)=\left(z+\Delta_1(z,w),w+\Delta_2(z,w)\right),
 $$
%which is a biholomorphism between $W_I$ and $\phi(W_I)$ since both series converge because $f$ is bounded in $\mathcal{S}$. It is injective by Hurwitz Theorem because the maps $\phi_n$ are injective  and their limit has rank 2 (see \cite{Krantz}, Exercise 3 on page 310). % the thereom states that f_n,f have the same number of zeroes. Suppose that f(z_1)=f(z_2), then h=f-f(z_2) has a zero in a neighborhood of  z_1 which avoids z_2, hence f_n also has a zero there and cannot be injective.  The requirement in the thm is that  f is non constant, which i think translates to f not the identity, but to stay safe i put it this way.

If $P=(z,w) \in W_I$,  then $F^n(P)=(z_n,w_n)\subset I\times I\subset S$ for all $j$, so $\Delta_1(z,w)$ and $\Delta_2(z,w)$ are convergent. Hence $\phi$
 is a holomorphic map from  $W_I$ to $\phi(W_I)$. Recall that $W_I$ is open by Corollary~\ref{cor: w_I open}.
 Moreover on $W_I$, using (\ref{eq:iterates1}) and (\ref{eq:iterates2}), we get
\begin{equation}\label{eq:norma}
 \Big \|(\phi - Id) (z,w)\Big\|=\Big \|\Big(\Delta_1(z,w),\Delta_2(z,w)\Big)\Big\|<\sqrt{2}\Delta(z,w)<\sqrt{2}.
\end{equation}

 It follows that $\phi$ is open  because $W_I$ is an unbounded set, hence if $\phi$ had rank 0 or 1, $\left \|(\phi - Id)\right \|$ could not be bounded on $W_I$.  Hence the map $\phi$ is injective by Hurwitz Theorem  (see \cite{Krantz}, Exercise 3 on page 310) because the maps $\phi_n$ are injective  and their limit has rank 2. It follows that $\phi$ is a biholomorphism between $W_I$ and  $\phi(W_I)$.
 % \footnote{secondo me andava giustificato che il limite avesse rango 2. Ovviamente e' vero e si fa c ome per le limit functions ma non era chiaro.}
  
To extend $\phi$ to all of $\Omega$ recall that $W_I$ is absorbing for $\Omega$. So if $P\in\Omega$, we have that  $F^k(P)\in W_I$ for some $k\in\N$, hence we can define   $\phi(P)= L^{-k} \circ \phi \circ F^k(P)$. Since $F$ is an automorphism, $\phi$ extends  as a biholomorphism from $\Omega$ to $\phi(\Omega)$. \\
\end{proof}  

\begin{rem}\label{rem:radice 2}
   More generally, from (\ref{eq:norma}) it follows that if $P=(z,w)\in S$ is such that $F^n(P)\in S$ for all $n\in \N$, than 
    $$
 \Big \|(\phi - Id) (z,w)\Big\|=\Big \|\Big(\Delta_1(z,w),\Delta_2(z,w)\Big)\Big\|<\sqrt{2}\Delta(z,w)<\sqrt{2}.
$$
\end{rem}

\begin{lemma}\label{lem: phi(W_I) in S}
$\phi(\Omega)\subset S$.
\end{lemma}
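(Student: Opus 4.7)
The plan is to combine three ingredients: $W_I$ is absorbing for $\Omega$ (Proposition~\ref{prop:Absorbing WI}), on $W_I$ the conjugacy $\phi$ differs from the identity by at most $\sqrt{2}$ (Remark~\ref{rem:radice 2}), and the linear map $L$ permutes the sectors $S_{k_1 k_2}$ in exactly the same way as the dynamics $\gamma$ from Proposition~\ref{prop:cicli}.

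First I would verify that $L$ preserves $S$. Writing $L(z,w)=(aw,z)$ with $a=\delta e^{2\pi i/m}$ and checking the defining inequalities of the sectors, multiplication by $a$ rotates $\mathcal{S}_{k_2}$ onto $\mathcal{S}_{k_2+1}$, while the second coordinate $z$ stays in $\mathcal{S}_{k_1}$. Hence $L(S_{k_1 k_2})=S_{k_2+1,\,k_1}=S_{\gamma(k_1,k_2)}$, so $L$ restricts to a bijection of $S$ onto itself, and therefore $L^{-k}(S)=S$ for every $k\in\N$.

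Next, fix $P\in\Omega$. By Proposition~\ref{prop:Absorbing WI} there is $k_0$ with $F^k(P)\in W_I$ for all $k\ge k_0$. Since $W_I\subset S$ is forward invariant, Proposition~\ref{prop:come cresce re} shows that the appropriately rotated real parts of $(z_k,w_k):=F^k(P)$ grow linearly in $k$ and tend to infinity. I would pick $k\ge k_0$ so large that these rotated real parts exceed a threshold $R^*$ of the order $\sqrt{2}\,(1+\tan(\pi/2m))/(\tan(\pi/2m)-\sigma)$, where $\sigma<\tan(\pi/2m)$ is the angle bound of the ambient $W_n$ containing $F^k(P)$. Because the forward orbit of $F^k(P)$ stays in $W_I\subset S$, Remark~\ref{rem:radice 2} applies and gives $\|\phi(F^k(P))-F^k(P)\|<\sqrt{2}$.

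A short triangle-inequality estimate on the rotated real and imaginary parts, using the choice of $R^*$, then shows that a $\sqrt{2}$-perturbation of $F^k(P)$ still lies in the same sector $S_{k_1 k_2}$, so $\phi(F^k(P))\in S$. Finally the conjugacy relation $\phi\circ F=L\circ\phi$ gives $\phi(P)=L^{-k}\bigl(\phi(F^k(P))\bigr)\in L^{-k}(S)=S$, which is what we want. The main obstacle is the sector-containment estimate, but it succeeds precisely because the $\sigma_n$ used to build $W$ are strictly below $\tan(\pi/2m)$ and the rotated real parts along forward orbits blow up, so the $\sqrt{2}$ error from $\phi-\mathrm{Id}$ is negligible compared with the margin to the sector boundary.
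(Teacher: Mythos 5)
Your observation that $L$ permutes the sectors and hence $L^{-k}(S)=S$ is correct, and the overall strategy (iterate forward, apply the $\sqrt 2$ bound, pull back by $L$) is natural. However, the sector-containment step, which you correctly flag as ``the main obstacle,'' has a genuine gap. Your threshold $R^*$ has $\tan(\pi/2m)-\sigma$ in the denominator, where $\sigma$ is ``the angle bound of the ambient $W_n$ containing $F^k(P)$.'' But Proposition~\ref{prop:Absorbing WI} only puts $F^k(P)$ in $W_I$, not in $W$. These are different sets: $W_I$ is defined by the condition $\re(z_n^m)>C^m$ for all $n$, and the corresponding slices $I_k$ of $\C$ contain points whose argument is arbitrarily close to $\pm\pi/2m$ (take $\theta=\pi/2m-\epsilon$ and $r>C/(m\epsilon)^{1/m}$, so that the distance $\approx r\sin\epsilon\sim \epsilon^{1-1/m}\to 0$ as $\epsilon\to 0$). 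So on $W_I$ the only available angle bound is $\sigma=\tan(\pi/2m)$ itself, which makes your $R^*$ infinite. Nothing in the absorbing property places $F^k(P)$ inside some $W_n$ with $\sigma_n<\tan(\pi/2m)$, so the claim that growth of the rotated real parts alone forces a $\sqrt 2$-ball around $F^k(P)$ into $S$ is not established.

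The paper avoids this entirely by arguing by contradiction on $\phi(W_I)$ rather than chasing a single point. It first notes that $\phi(W_I)$ lies in the $\sqrt2$-neighborhood $V$ of $W_I$ (Remark~\ref{rem:radice 2}) and is forward $L$-invariant (since $W_I$ is forward $F$-invariant and $\phi\circ F=L\circ\phi$). If some $Q=(z_0,w_0)\in\phi((W_I)_{00})$ had $z_0=re^{i\theta}\notin\mathcal S_0$, then $L^{2mn}(Q)=(\delta^{mn}z_0,\delta^{mn}w_0)$ stays in $\phi((W_I)_{00})$ but, since the argument $\theta$ is unchanged and the modulus is multiplied by $\delta^{mn}$, its distance to $S_{00}$ (hence to $(W_I)_{00}$) tends to infinity, contradicting $\phi(W_I)\subset V$. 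This argument needs no angle bound strictly inside $\tan(\pi/2m)$: it uses only that a ray at a fixed bad angle is pushed to infinite distance from the sector under scaling. The final step $\phi(\Omega)=\bigcup_n L^{-n}(\phi(W_I))\subset S$ then uses exactly your observation that $L^{-n}(S)=S$. If you want to save your direct approach, you would either need to show that orbits in $W_I$ eventually acquire an angle bound strictly below $\tan(\pi/2m)$ (which is not true: the angle is essentially constant along the orbit up to a small summable correction), or you would have to replace the $\sqrt2$-ball estimate by the scaling argument, at which point you have essentially reproduced the paper's proof.
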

\begin{proof}
We first prove that $\phi(W_I)\subset S$, than, using Proposition~\ref{prop:Absorbing WI}, we extend this result to $\Omega$.

Since $W_I$ is forward invariant and contained in $S$, using Remark~\ref{rem:radice 2}, it follows that $\phi(W_I)$ is contained in a $\sqrt{2}$-neighborhood $V$ of $W_I$.

Assume by contradiction that there exists $Q\in \phi(W_I)\setminus S$. We can assume, without loss of generality, that $Q=(z_0,w_0)\in \phi((W_I)_{00})$, with $z_0=re^{i \theta}\notin \mathcal{S}_0$.  Notice that since $W_I$ is forward invariant under $F$ and $\phi$ is a conjugacy, also $\phi(W_I)$ is forward invariant under $L$, so $L^{2mn}(Q)=( a^{mn} z, a^{mn} w )=(\delta^{mn} r e^{i \theta}, \delta^{mn} w)\in \phi((W_I)_{00})$.\\

Since $\delta^{nm} r $ tends to infinity and since $\theta$ is such that $re^{i \theta}\notin\mathcal{S}_0$, the distance of $L^{2mn}(Q)$ from  the boundary of $S_{00}$ tends to infinity, hence so does the distance of  $L^{2mn}(Q)$ from $(W_I)_{00}\subset S_{00}$, contradicting $\phi(W_I)\subset V$. Hence $\phi(W_I)\subset S$. 

Since $W_I $ is absorbing for $\Omega$ under $F$, $\phi\circ F=L\circ\phi$, and $\phi(W_I)$ is completely invariant under $L$,   we have that
\begin{equation}\label{eq:image of Omega under conjugacy}
%\phi(\Omega)= \phi(\bigcup_{n\geq0} F^{-n}(W_I))=\bigcup_{n\geq0} L^{-n}(\phi(W_I))\subset \bigcup_{n\geq0} L^{-n}(S)\subset S.
\phi(\Omega)= \phi(\bigcup_{n\geq0} F^{-n}(W_I))=\bigcup_{n\geq0} L^{-n}(\phi(W_I))\subset \phi(W_I) \subset S.
\end{equation}
\end{proof}

\begin{prop}\label{prop: four different FC}
$\Omega$ consists of $m^2$ distinct connected components.
\end{prop}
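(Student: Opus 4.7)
The plan is to exploit the conjugacy $\phi$ from Proposition \ref{prop:conjugacy}, together with Lemma \ref{lem: phi(W_I) in S}, to transport $\Omega$ into the disjoint union $S = \bigcup_{a,b \in \Z_m} S_{ab}$, and then show that each $\Omega_{ab}$ maps into the matching sector $S_{ab}$. Since the $S_{ab}$ are pairwise disjoint, this will force the $\Omega_{ab}$ to be pairwise distinct, and combined with the upper bound from Corollary \ref{cor:at most} we will conclude that $\Omega$ has exactly $m^2$ components.

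The matching step relies on the explicit set $A_{ab}$ from (\ref{eq:Aab}). Its points lie on the central real ray of $\mathcal{S}_a \times \mathcal{S}_b$ (hence have zero slope relative to the axis of the sector), so for $M$ sufficiently large they satisfy all of the slope and size conditions defining $W_{ab}$, giving $A_{ab} \subset W_{ab} \subset \Omega_{ab}$. Moreover, a point of $A_{ab}$ sits at Euclidean distance at least $M \sin(\pi/(2m))$ from $\partial S_{ab}$, since this is the distance from the central ray to the boundary rays of a sector of angular half-width $\pi/(2m)$. Choosing $M > \sqrt{2}/\sin(\pi/(2m))$ and applying the bound $\|\phi - \mathrm{Id}\| < \sqrt{2}$ from Remark \ref{rem:radice 2} then forces $\phi(A_{ab}) \subset S_{ab}$.

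To upgrade from $\phi(A_{ab})$ to $\phi(\Omega_{ab})$, note that $\phi(\Omega_{ab})$ is connected (as the image of a connected set under a continuous map), is contained in $S$ by Lemma \ref{lem: phi(W_I) in S}, and meets $S_{ab}$ via $\phi(A_{ab})$. Since the $S_{a'b'}$ are precisely the connected components of $S$, this forces $\phi(\Omega_{ab}) \subset S_{ab}$. Disjointness of the sectors then yields disjointness of the $\phi(\Omega_{ab})$, and hence of the $\Omega_{ab}$, completing the argument. The only (mild) obstacle is the geometric distance estimate that a point on the central ray of $\mathcal{S}_a$ at distance $M$ from the origin lies at distance $M\sin(\pi/(2m))$ from $\partial \mathcal{S}_a$, but this is immediate from the angular definition of the sectors and merely dictates how large $M$ must be chosen.
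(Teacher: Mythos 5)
Your proof is correct and follows essentially the same route as the paper: use the conjugacy $\phi$ together with Lemma~\ref{lem: phi(W_I) in S} to push $\Omega$ into $S$, then use the sets $A_{ab}$ from~(\ref{eq:Aab}) and the $\sqrt{2}$-bound of Remark~\ref{rem:radice 2} to ensure each sector $S_{ab}$ is hit, and finally invoke Corollary~\ref{cor:at most} for the matching upper bound. The only cosmetic difference is that you make the ``$\sqrt{2}$-neighborhood of $A_{ab}$ lies in $S_{ab}$ for $M$ large'' step explicit via the $M\sin(\pi/(2m))$ estimate and phrase the conclusion as $\phi(\Omega_{ab})\subset S_{ab}$ rather than as nonemptiness of each $\phi(\Omega)_{ab}$; these are equivalent.
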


\begin{proof}
We will prove that $\phi(\Omega)$ consists of exactly $m^2$ connected components, so, using the fact that $\phi$ is a biholomorphism, the same is true for $\Omega.$

By Corollary~\ref{cor:at most}, we have that $\Omega$ consists of at most $m^2$ connected components and again since $\phi$ is a biholomorphism, the same is true for $\phi(\Omega)$. Using Lemma~\ref{lem: phi(W_I) in S}, we have that $\phi(\Omega)\subset S$ and as usual let us define $\phi(\Omega)_{ab}$ the component of $\phi(\Omega)$ contained in $S_{ab}$, with $a,b \in \Z_m$. We conclude if we prove that $\phi(\Omega)_{ab}\ne \emptyset$ for all $a,b \in \Z_m$.

Since the sets $A_{ab}$ defined in (\ref{eq:Aab}) are contained in $\Omega$ for $M$ sufficiently large, and since (using Remark~\ref{rem:radice 2}) a $\sqrt{2}$-neighborhood of $A_{ab}$ is contained in $S_{ab}$ for $M$ sufficiently large, we have that $\phi(A_{ab}) \subset \phi(\Omega)_{ab}$ for $M$ sufficiently large, hence $\phi(\Omega)_{ab}\ne \emptyset$.
\end{proof}

We now recall a simple topological fact (for a proof see Lemma 2.19 of [BBS]) that we will use in Proposition~\ref{prop:Geometric structure}. 
\begin{rem}\label{rem:topological fact}
    Let $A,B \subset \C^n$ open and $A$ is connected. If $A \cap B \ne \emptyset$ and $\partial B \cap A = \emptyset$, then $A \subseteq B$.
\end{rem}

\begin{prop}[Geometric structure of $\Omega$]\label{prop:Geometric structure}
$\Omega$ is biholomorphic to $S$.
\end{prop}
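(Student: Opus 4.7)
The strategy is to show that the biholomorphism $\phi$ of Proposition~\ref{prop:conjugacy} sends $\Omega$ onto $S$. Since $\phi(\Omega) \subset S$ by Lemma~\ref{lem: phi(W_I) in S}, and both $\Omega$ and $S$ split into $m^2$ components paired by $\phi(\Omega_{ab}) \subset S_{ab}$, it suffices to prove $\phi(\Omega_{ab}) = S_{ab}$ for each $a,b \in \Z_m$. I would apply Remark~\ref{rem:topological fact} with $A = S_{ab}$ and $B = \phi(\Omega_{ab})$: $A$ is connected and open, $B$ is open as the image of an open set under a biholomorphism, and $A \cap B = \phi(\Omega_{ab})$ is nonempty (since $\phi(A_{ab}) \subset \phi(\Omega_{ab}) \cap S_{ab}$ for $M$ large enough, as in the proof of Proposition~\ref{prop: four different FC}). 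The only missing ingredient is $\partial \phi(\Omega_{ab}) \cap S_{ab} = \emptyset$.

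Suppose for contradiction that $Q \in \partial \phi(\Omega_{ab}) \cap S_{ab}$. Since $\Omega_{ab}$ is a Fatou component and $F$ is an automorphism, $\Omega_{ab}$ is completely invariant under $F^{2m}$; via the conjugacy $\phi \circ F = L \circ \phi$, the set $\phi(\Omega_{ab})$ is completely invariant under $L^{2m}$. A direct calculation using $a^m = \delta^m e^{2\pi i} = \delta^m$ gives $L^{2m}(z,w) = \delta^m (z,w)$, so both $\phi(\Omega_{ab})$ and its boundary are invariant under the radial scaling $(z,w) \mapsto \delta^m(z,w)$. Hence $\delta^{mN} Q \in \partial \phi(\Omega_{ab}) \cap S_{ab}$ for every $N \ge 0$, and these points move arbitrarily deep inside $S_{ab}$ as $N \to \infty$.

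The contradiction comes from the following key claim: if $P \in S_{ab}$ is deep enough, meaning $\re(z e^{\frac{2(m-a)}{m}\pi i})$ and $\re(w e^{\frac{2(m-b)}{m}\pi i})$ both exceed a threshold $R^*$ depending only on $\delta$ and $m$, then $P \in \phi(W_I)$. I would prove this via a higher-dimensional Rouché argument: for $R^*$ large enough the closed ball $\overline{B(P, 2\sqrt 2)}$ sits inside $W_{ab}$ with forward orbit remaining in $I \times I$ (by Proposition~\ref{prop: invariance W} and Proposition~\ref{prop:come cresce re}), hence is contained in $W_I$; in particular $\phi$ is defined on it, and by Remark~\ref{rem:radice 2} one has $\|\phi(P') - P'\| < \sqrt{2}$ for every $P'$ in the ball. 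On the boundary sphere
$$
\|(\phi(P') - P) - (P' - P)\| = \|\phi(P') - P'\| < \sqrt 2 < 2\sqrt 2 = \|P' - P\|,
$$
so the straight-line homotopy between $P' \mapsto P' - P$ and $P' \mapsto \phi(P') - P$ has no zero on the boundary; since $P' \mapsto P' - P$ has topological degree $1$ at $P$, so does $P' \mapsto \phi(P') - P$, producing $P^* \in B(P, 2\sqrt 2) \subset W_I$ with $\phi(P^*) = P$. Thus $P \in \phi(W_I) \subset \phi(\Omega_{ab})$.

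Applying the claim to $\delta^{mN} Q$ with $N$ large yields $\delta^{mN} Q \in \phi(\Omega_{ab})$, which contradicts $\delta^{mN} Q \in \partial \phi(\Omega_{ab})$ because $\phi(\Omega_{ab})$ is open. This proves $\partial \phi(\Omega_{ab}) \cap S_{ab} = \emptyset$; Remark~\ref{rem:topological fact} then gives $S_{ab} \subset \phi(\Omega_{ab})$, and combining with the reverse inclusion we conclude $\phi(\Omega_{ab}) = S_{ab}$. The main obstacle is the deep-point claim: one must verify that for $R^*$ large enough the ball $\overline{B(P, 2\sqrt 2)}$ lies inside $W_{ab}$ with orbit in $I \times I$, which is a bookkeeping exercise in the sectorial conditions defining the sets $W_n$, but requires choosing $R^*$ well above all the relevant thresholds $R_n$, $\sigma_n$ and $C$.
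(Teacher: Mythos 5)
Your approach is genuinely different from the paper's. The paper applies the topological Remark~\ref{rem:topological fact} with $A = U^*$ (a $\sqrt{2}$-shrunken sector cut off at large real parts) and $B = \phi(W)$, concluding $U^* \subset \phi(W)$, and then covers all of $S$ via $\bigcup_n L^{-n}(U^*)$; you instead apply the remark with $A = S_{ab}$ and $B = \phi(\Omega_{ab})$ directly and establish the missing boundary condition $\partial\phi(\Omega_{ab}) \cap S_{ab} = \emptyset$ by combining the $L^{2m}$-scaling invariance of $\phi(\Omega_{ab})$ with a degree/homotopy argument. Both proofs hinge on $L$-invariance and on the estimate $\|\phi - \mathrm{Id}\| < \sqrt{2}$, but your route replaces the paper's covering step by a more hands-on surjectivity argument; the homotopy computation on the sphere of radius $2\sqrt{2}$ is correct and is a reasonable alternative.

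There is, however, a genuine gap in your deep-point claim as stated. You assert that if $P = (z,w) \in S_{ab}$ satisfies $\re\bigl(z e^{\frac{2(m-a)}{m}\pi i}\bigr), \re\bigl(w e^{\frac{2(m-b)}{m}\pi i}\bigr) > R^*$ with $R^*$ depending only on $\delta$ and $m$, then $\overline{B(P,2\sqrt{2})} \subset W_{ab}$. This is false: $S_{ab}$ is a product of cones with vertex at the origin, so for any $R^*$ there are points with real part larger than $R^*$ lying arbitrarily close in Euclidean distance to $\partial S_{ab}$ (take the argument close to $\pm\pi/2m$); for such $P$ the ball of radius $2\sqrt{2}$ already exits $S_{ab}$, let alone the strictly smaller $W_{ab}$. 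Moreover there is no single threshold dominating the family of conditions defining $W = \bigcup_n W_n$, since $\sigma_n \to \tan(\pi/2m)$ only as $R_n \to \infty$. What you actually need is weaker and is true: for $Q$ \emph{fixed} in $S_{ab}$, the points $\delta^{mN}Q$ eventually satisfy $\overline{B(\delta^{mN}Q, 2\sqrt{2})} \subset W_{ab}$ with orbit in $I \times I$, because the arguments of the coordinates of $\delta^{mN}Q$ are fixed and bounded away from $\pm\pi/2m$ while the moduli diverge, so both the distance to $\partial S_{ab}$ and the slack in the $\sigma_n$-condition diverge. Reformulating the claim with a threshold depending on the distance to $\partial S_{ab}$ (equivalently, on the fixed angle of $Q$) rather than on the real part alone repairs the argument, and then the remainder of your proof goes through.
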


\begin{proof}
Let $W$ defined in (\ref{eq:W}). Since $W\subset \Omega$ is invariant, we have that 
$$\bigcup_{n\in \N}F^{-n}(W) \subset \Omega\,,$$
moreover, since $\phi$ is defined on $\Omega$, it is also defined on $W$, so
\begin{equation}\label{phi(Omega)}
\bigcup_{n\in \N}L^{-n}(\phi(W)) \subset \phi (\Omega)\,\,.
\end{equation}
Let $U\subset S$ such that $S$ is a $\sqrt{2}$-neighborhood of $U$.\\
Let $Q=(z,w)\in U$ and notice that $\lambda Q=(\lambda z , \lambda w) \in U$ for all $\lambda > 1$; furthermore
$$\bigcup_{\substack{n\in \N 
\\ \lambda>1}}L^{-n}(\lambda Q)= \mu Q = (\mu z , \mu w)\,\,\,\,\,\, \text{with}\,\,\,\,\mu>0\,.$$

By varying $Q\in U$, we can cover the entire set $S$:
$$S =\bigcup_{\substack{n\in \N 
\\ \lambda>1\\ Q\in U}} L^{-n}(\lambda Q) ,$$
that is
$$S=\bigcup_{n\in \N}L^{-n}(U) .$$
In view of Remark~\ref{rem:topological fact}, let $B=\phi(W)$ and $A=U^*$, where $U^*$ is $U$ with $\re z, \re w$ sufficiently large such that $\partial( \phi(W)) \cap U^*=\emptyset$.
Notice that $U^*\cap \phi(W)\ne \emptyset$, so by Remark~\ref{rem:topological fact}, we have
\begin{equation}\label{using rem}
    \phi(W)\subseteq U^* .
\end{equation}
Moreover
\begin{equation}\label{U^*}
S=\bigcup_{n\in \N}L^{-n}(U^*) .
\end{equation}

Using Remark~\ref{rem:radice 2}, equation~(\ref{U^*}), equation~(\ref{using rem}) and Lemma~\ref{lem: phi(W_I) in S}, we obtain
$$
S=\bigcup_{n\in \N}L^{-n}(U^*) \subseteq \bigcup_nL^{-n}(\phi(W))\subset \phi (\Omega) \subset S \,\,,$$
so $\phi(\Omega)=S$.
Again since $\phi$ is a biholomorphism, the claim follows.
\end{proof}

As a corollary we have what follows.
\begin{cor}
    Each Fatou component of $\Omega$ is biholomorphic to $\H\times \H$
\end{cor}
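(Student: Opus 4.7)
The plan is to deduce this corollary directly from the geometric identification $\Omega \cong S$ furnished by Proposition~\ref{prop:Geometric structure}, combined with a routine description of the product sectors $S_{ab}$.

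First I would argue that the biholomorphism $\phi : \Omega \to S$ restricts to a biholomorphism between each Fatou component $\Omega_{ab}$ and the corresponding product $S_{ab} = \mathcal{S}_a \times \mathcal{S}_b$. This is essentially already contained in the proof of Proposition~\ref{prop: four different FC}: there we saw that $\phi(A_{ab}) \subset S_{ab}$, and since $\phi$ is a biholomorphism between $\Omega$ (which has $m^2$ connected components $\Omega_{ab}$) and $S$ (which has $m^2$ connected components $S_{ab}$), it must send components to components. As $\phi(\Omega_{ab})$ is connected and meets $S_{ab}$, we conclude $\phi(\Omega_{ab}) = S_{ab}$. Thus it suffices to show each $S_{ab}$ is biholomorphic to $\H \times \H$.

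Since $S_{ab} = \mathcal{S}_a \times \mathcal{S}_b$, it is enough to exhibit, for each $k \in \Z_m$, a biholomorphism $\mathcal{S}_k \to \H$. All of the $\mathcal{S}_k$ are rotations of $\mathcal{S}_0 = \{z \in \C : |\arg z| < \pi/(2m)\}$, a planar sector of opening angle $\pi/m$ not containing the origin. The map $z \mapsto z^m$ is a biholomorphism from $\mathcal{S}_0$ onto the right half-plane $\{w \in \C : \re w > 0\}$: injectivity holds precisely because the opening angle is $\pi/m$, and surjectivity is a direct angle computation. The right half-plane is biholomorphic to $\H$ via multiplication by $i$. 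Composing these gives $\mathcal{S}_k \cong \H$, hence $S_{ab} \cong \H \times \H$, and finally $\Omega_{ab} \cong \H \times \H$. The substantive content lies entirely in Proposition~\ref{prop:Geometric structure}; the remaining step is a routine application of elementary conformal maps and presents no genuine obstacle.
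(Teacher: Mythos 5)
Your proof is correct and takes essentially the same approach as the paper: both deduce the corollary from Proposition~\ref{prop:Geometric structure} together with the fact that each component $S_{ab} = \mathcal{S}_a \times \mathcal{S}_b$ is a product of sectors, each biholomorphic to $\H$. You merely spell out the explicit conformal map $z \mapsto iz^m$ and the component-to-component matching, both of which the paper leaves implicit.
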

\begin{proof}
    By Proposition~\ref{prop:Geometric structure}, 
 $\Omega$ is biholomorphic to $S$ and since $S$ has $m^2$ connected components $S_{ab}$, each of which is biholomorphic to $\H \times \H$, the same is true for $\Omega$.
\end{proof}

We now study the limit set of $\Omega$.
\begin{prop}[Hyperbolic limit sets ]\label{prop:hyperbolic FC}
$h_1(\Omega_{ab})$ and $h_2(\Omega_{ab})$ are hyperbolic.
\end{prop}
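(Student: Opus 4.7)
The plan is to transport the problem through the conjugacy $\phi:\Omega\to S$ established in Propositions~\ref{prop:conjugacy} and~\ref{prop:Geometric structure}, which intertwines $F$ with the linear map $L(z,w)=(aw,z)$, where $a=\delta e^{2\pi i/m}$. For $L$ the iterates are trivial: $L^{2n}(z,w)=(a^{n}z,a^{n}w)$ and $L^{2n+1}(z,w)=(a^{n+1}w,a^{n}z)$, so in the affine chart on $\ell_\infty\cong\P^1$ the corresponding limit functions are the explicit rational maps $\tilde h_1(z,w)=z/w$ and $\tilde h_2(z,w)=aw/z$.

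First I would verify the factorization $h_i=\tilde h_i\circ\phi$ on all of $\Omega$. On the absorbing set $W_I$ this is immediate: the formula $\phi(z,w)=(z+\Delta_1(z,w),\,w+\Delta_2(z,w))$ from the proof of Proposition~\ref{prop:conjugacy} matches (\ref{eq:limit h1}) and (\ref{eq:limit h2}) term by term. To pass from $W_I$ to $\Omega$, I would use that for every $P\in\Omega$ there exists $k\in\N$ with $F^k(P)\in W_I$, and combine the equivariance $\phi\circ F^k=L^k\circ\phi$ with the fact that $h_1\circ F^k$ coincides with $h_1$ or $h_2$ according to the parity of $k$, while $\tilde h_1\circ L^{-k}$ equals $\tilde h_1$ or $\tilde h_2$ according to the same parity. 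Since $\phi(\Omega)=S$ by Proposition~\ref{prop:Geometric structure}, this factorization reduces the statement to showing that $\tilde h_1(S_{ab})$ and $\tilde h_2(S_{ab})$ are hyperbolic open subsets of $\P^1$.

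The image is then a routine geometric computation. Each $\mathcal S_k$ is an open circular sector of angular amplitude $\pi/m$ centered on the ray of argument $2\pi k/m$, so
\[\tilde h_1(S_{ab})=\{\rho\, e^{i\theta}:\rho>0,\ \theta\in (2\pi(a-b)/m - \pi/m,\ 2\pi(a-b)/m + \pi/m)\},\]
an open sector of $\C^{\ast}$ of angular amplitude $2\pi/m$, and $\tilde h_2(S_{ab})$ is the analogous sector rotated by $\arg a=2\pi/m$. Since $m\ge 2$, the amplitude $2\pi/m$ is at most $\pi<2\pi$, so each sector is biholomorphic to the upper half-plane $\H$ via a rotation composed with $\zeta\mapsto\zeta^{m/2}$; its complement in $\P^1$ contains a whole nontrivial closed sector together with $0$ and $\infty$. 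In particular each such sector is a hyperbolic Riemann surface, which completes the argument.

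The main obstacle is the bookkeeping in the factorization step: the identity $h_i=\tilde h_i\circ\phi$ is evident on $W_I$ from the formulas, but propagating it to $\Omega$ through the extension $\phi(P)=L^{-k}\circ\phi\circ F^k(P)$ forces one to track how $h_1, h_2$ and $\tilde h_1,\tilde h_2$ swap roles under odd iterates. Once this algebraic identity is in place, the hyperbolicity reduces to the elementary observation that the image of a product of two open sectors of amplitude $\pi/m$ under $(z,w)\mapsto z/w$ is a proper open sector of $\C^{\ast}$.
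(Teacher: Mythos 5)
Your proof is correct, but it takes a genuinely different route from the paper's. The paper establishes hyperbolicity directly from the absorbing property of $W_I$ (Proposition~\ref{prop:Absorbing WI}): since $h_1\circ F^2=h_1$ and $\bigcup_k (W_I)_{(a+k)(b+k)}$ absorbs $\bigcup_k\Omega_{(a+k)(b+k)}$ under $F^2$, one gets the containment $h_i(\Omega_{ab})\subseteq U_j$ from Lemma~\ref{lem:limit set for WI}, and containment in a sector is already enough for hyperbolicity. The exact equality $h_i(\Omega_{ab})=U_j$ is established separately in the paper (Proposition~\ref{prop:limit set for Omega}), where the reverse inclusion requires a quantitative estimate on $\bigl|\tfrac{z_{2n}}{w_{2n}}-\tfrac{z_0}{w_0}\bigr|$ and an appeal to Rouch\'e's theorem in $\C^2$.

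Your argument instead transports the problem through the conjugacy $\phi$ and the identity $\phi(\Omega)=S$ (Proposition~\ref{prop:Geometric structure}), obtaining $h_i=\tilde h_i\circ\phi$ on all of $\Omega$ and hence $h_i(\Omega_{ab})=\tilde h_i(S_{a'b'})$, an explicit open sector of amplitude $2\pi/m$. This is clean and, because $\phi$ is onto $S$, it actually produces the \emph{equality} $h_i(\Omega_{ab})=U_j$ with no extra work, thereby subsuming the later Proposition~\ref{prop:limit set for Omega} and avoiding Rouch\'e's theorem entirely. The bookkeeping step you flag (the parity swap $h_1\circ F=h_2$, $h_2\circ F=h_1$ matching $\tilde h_1\circ L=\tilde h_2$, $\tilde h_2\circ L=\tilde h_1$ when extending the factorization off $W_I$) does check out. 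Two small points worth making explicit if you wrote this out fully: (i) to name the precise sector $U_{m-b}$ rather than just ``some sector'' you need $\phi(\Omega_{ab})=S_{ab}$ with matching indices, which follows from the proof of Proposition~\ref{prop: four different FC} ($\phi(A_{ab})\subset S_{ab}$ and $A_{ab}\subset\Omega_{ab}$), though for bare hyperbolicity the indexing is irrelevant; and (ii) the cost of your shortcut is that it leans on the full strength of Proposition~\ref{prop:Geometric structure}, whereas the paper's argument for this proposition needs only the absorbing property, keeping the logical dependency lighter at this point in the development.
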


\begin{proof}

By Proposition~\ref{prop:Absorbing WI}, $W_I$ is absorbing for $\Omega$ under $F$, hence by Proposition~\ref{prop: four different FC} and because of the sets $(W_I)_{ab}$ are mapping to each other, each $(W_I)_{ab}$ is absorbing for $\Omega_{ab}$ (Fatou components of $F$) under $F^{2m}$, in particular if $(W_I)_{ab}$ belongs to the short cycle, it is absorbing for $\Omega_{ab}$ under $F^m$.\\
Consequently, $\bigcup_{k\in \Z_m} (W_I)_{(a+k)\,(b+k)}$ is absorbing for $\bigcup_{k\in \Z_m} \Omega_{(a+k)\,(b+k)}$ under $F^2$.\\

Using Lemma~\ref{lem:limit set for WI}, the fact that $W_I$ is open, and considering that for each $\Omega_{ab}$ we can take $\Omega_{0\,(b-a)}$ as the representative of the cycle, we have \\ 
$$h_1(\Omega_{0b})\subset h_1(\bigcup_{k\in \Z_m} \Omega_{k\,(b+k)})=h_1( \bigcup_{k\in \Z_m} (W_I)_{k\,(b+k)}) \subseteq \begin{cases}
  U_{m-b} \,\,\,\text{if}\,\,\, b\ne \frac{m-1}{2} \\
  U_{\frac{m+1}{2}} \,\,\,\text{if}\,\,\, b= \frac{m-1}{2}
\end{cases} $$
and
$$h_2(\Omega_{0b})\subset h_2(\bigcup_{k\in \Z_m} \Omega_{k\,(b+k)})=h_2( \bigcup_{k\in \Z_m} (W_I)_{k\,(b+k)}) \subseteq \begin{cases}
  U_{b+1} \,\,\,\text{if}\,\,\, b\ne \frac{m-1}{2} \\
  U_{\frac{m+1}{2}} \,\,\,\text{if}\,\,\, b= \frac{m-1}{2}
\end{cases} $$
where $U_j$ are defined in (\ref{eq:U_j}).

So $h_1(\Omega_{ab})$ with $a,b\in \Z_m$ are hyperbolic sets.
\end{proof}

We devote the rest of this section to proving the following proposition. Again we only consider $h_i(\Omega_{0b})$ to simplify notation.
 \begin{prop}[Limit set for $\Omega$]\label{prop:limit set for Omega}
 Let $U_j$ defined in (\ref{eq:U_j}), than
$$h_1(\Omega_{0b})= \begin{cases}
  U_{m-b} \,\,\,\text{if}\,\,\, b\ne \frac{m-1}{2} \\
  U_{\frac{m+1}{2}} \,\,\,\text{if}\,\,\, b= \frac{m-1}{2}
\end{cases} $$
and
$$h_2(\Omega_{0b})= \begin{cases}
  U_{b+1} \,\,\,\text{if}\,\,\, b\ne \frac{m-1}{2} \\
  U_{\frac{m+1}{2}} \,\,\,\text{if}\,\,\, b= \frac{m-1}{2}
\end{cases} $$
 \end{prop}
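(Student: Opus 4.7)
The inclusion $\subseteq$ is already given by Proposition~\ref{prop:hyperbolic FC}, so what remains is to show that every value in the sector on the right-hand side is actually attained by $h_1$ or $h_2$. My approach is to transport the problem to the linear model via the conjugacy $\phi : \Omega \to S$ of Propositions~\ref{prop:conjugacy} and~\ref{prop:Geometric structure}: on the linear side the analogous ``limit functions'' become simply $(Z,W)\mapsto Z/W$ and $(Z,W)\mapsto aW/Z$, and their images on a product of sectors $\mathcal{S}_0 \times \mathcal{S}_b$ can be read off directly.

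First I would identify $\phi(\Omega_{0b}) = S_{0b} = \mathcal{S}_0 \times \mathcal{S}_b$. Since $\phi$ is a biholomorphism between the $m^2$ components of $\Omega$ and the $m^2$ components of $S$, and since $\phi(A_{0b}) \subset S_{0b}$ by the estimate of Remark~\ref{rem:radice 2} used in the proof of Proposition~\ref{prop: four different FC}, connectedness forces the claimed equality.

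The key step is then to show that for every $P \in \Omega$, writing $(Z,W) := \phi(P)$,
\begin{equation*}
h_1(P) = \frac{Z}{W}, \qquad h_2(P) = \frac{aW}{Z}.
\end{equation*}
Since $h_1 \circ F^2 = h_1$ and $W_I$ is absorbing for $\Omega$ (Proposition~\ref{prop:Absorbing WI}), I may replace $P$ by $F^{2k}(P) \in W_I$ for $k$ large, which does not alter the ratio $Z/W$ (it gets replaced by $a^k Z / a^k W$). For $P \in W_I$, the functional equation $\phi \circ F = L \circ \phi$ gives $F^{2n}(P) = \phi^{-1}(a^n Z, a^n W)$; by Remark~\ref{rem:radice 2} both $\phi$ and $\phi^{-1}$ lie within $\sqrt{2}$ of the identity on the forward-invariant sets $W_I$ and $\phi(W_I)$, so $(z_{2n}, w_{2n}) = (a^n Z, a^n W) + O(1)$ and the ratio tends to $Z/W$. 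The same argument using $L^{2n+1}(Z,W) = (a^{n+1}W, a^n Z)$ yields $h_2(P) = aW/Z$.

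The remainder is a direct angular calculation: writing $Z = r_1 e^{i\theta_1}$ with $|\theta_1| < \pi/(2m)$ and $W = r_2 e^{i\theta_2}$ with $|\theta_2 - 2\pi b/m| < \pi/(2m)$ and $r_1, r_2 > 0$ free, the quantity $Z/W$ attains every positive modulus and every argument in the open interval of width $2\pi/m$ centered at $-2\pi b/m \equiv 2\pi(m-b)/m \pmod{2\pi}$, which by~(\ref{eq:U_j}) is exactly $U_{m-b}$; similarly $\arg(aW/Z)$ sweeps out $U_{b+1}$. In the short-cycle case $b = (m-1)/2$ one has $m-b = b+1 = (m+1)/2$, so both functions surject onto $U_{(m+1)/2}$. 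The main technical point is controlling $\|\phi^{-1} - \mathrm{Id}\|$ on the relevant region, which is precisely what Remark~\ref{rem:radice 2} together with the forward invariance of $W_I$ delivers, so once one commits to the conjugacy viewpoint the argument proceeds cleanly.
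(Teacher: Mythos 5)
Your proof is correct and takes a genuinely different route from the paper's. The paper establishes the inclusion $h_1(W_{0b}) \supseteq U_{m-b}$ (Lemma~\ref{lem:limit set for W}) by applying a two-dimensional Rouch\'e argument (Theorem~\ref{thm:Rouche 2D}) to the comparison map $G(z,w)=z/w$ on small polydisks in $W$ with $|w_0|$ large, and then combines this lower bound with the upper bound from Proposition~\ref{prop:hyperbolic FC}. You instead transport the whole problem to the linear model: once $\phi(\Omega_{0b})=S_{0b}$ is established via Proposition~\ref{prop:Geometric structure} (together with the component identification already used in Proposition~\ref{prop: four different FC}), and once the identities $h_1=\phi_1/\phi_2$ and $h_2=a\phi_2/\phi_1$ are in hand, the result reduces to an elementary angular computation on $\mathcal{S}_0\times\mathcal{S}_b$. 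Your approach is cleaner and avoids the Rouch\'e machinery entirely; in fact it yields the \emph{equality} directly, so strictly speaking you do not even need to invoke Proposition~\ref{prop:hyperbolic FC} for the $\subseteq$ direction. One small remark: the identity $h_1 = \phi_1/\phi_2$ does not require your dynamical argument via the $\sqrt{2}$-estimate --- it is immediate by comparing (\ref{eq:limit h1}) and (\ref{eq:limit h2}) with the explicit formula $\phi(z,w)=\big(z+\Delta_1(z,w),\,w+\Delta_2(z,w)\big)$ from the proof of Proposition~\ref{prop:conjugacy}, which is valid on all of $\Omega$ because the sums $\Delta_1,\Delta_2$ converge there (the orbit is eventually in $W_I\subset S$, so the tails are geometrically small, and the finitely many initial terms are finite).
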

To prove Proposition~\ref{prop:limit set for Omega} we shall use the following lemma.

  \begin{lem}%[Limit set for $W$]
 \label{lem:limit set for W}
$$ 
 h_1\left((W)_{0b}\right)\supseteq U_{m-b}\,\,\,\,\text{and}\,\,\,\, h_2\left((W)_{0b}\right)\supseteq U_{b+1} \, ,\,\,\,\,\text{ if $b\neq \frac{m-1}{2}$}
 $$
and
 $$
 h_1\left((W)_{0b}\right), h_2\left((W)_{0b}\right)\supseteq U_{\frac{m+1}{2}}\, ,\,\,\,\,\text{ if $b=\frac{m-1}{2}$}.
 $$
 \end{lem}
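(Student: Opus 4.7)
The plan is to establish the inclusion $h_1((W)_{0b})\supseteq U_{m-b}$ (respectively $\supseteq U_{(m+1)/2}$ when $b=\tfrac{m-1}{2}$) by a one-variable Rouch\'e argument in the first coordinate, producing for each target $\xi$ in the open sector an explicit preimage $(z_0^\ast,v)\in (W)_{0b}$. The companion inclusion for $h_2$ will then follow formally: equation (\ref{eq:limit h2}) gives $h_1\cdot h_2 = a$, and a direct computation of arguments shows that the map $\xi\mapsto a/\xi$ sends $U_{m-b}$ bijectively onto $U_{b+1}$ (and sends $U_{(m+1)/2}$ to itself).

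To pick $v$: writing $\theta=\arg\xi$, the hypothesis $\xi\in U_{m-b}$ becomes $|\theta+\tfrac{2\pi b}{m}|<\pi/m$. The requirements $\xi v\in(\mathcal{W}_{\sigma_n,R_n})_0$ and $v\in(\mathcal{W}_{\sigma_n,R_{n-1}})_b$ impose two open constraints on $\arg v$ whose centres lie at distance exactly $|\theta+\tfrac{2\pi b}{m}|$. Since $2\arctan\sigma_n\uparrow\pi/m$ as $n\to\infty$, choosing $n$ large with $\mu(n)=(0,b)$ (so $n$ a multiple of $2m$, or of $m$ in the short cycle) makes these intervals overlap. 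Taking $\arg v$ in their overlap and $|v|$ large enough to satisfy the real-part conditions $\Re(\xi v)>R_n$ and $\Re(v\,e^{\frac{2(m-b)}{m}\pi i})>R_{n-1}$ places the base point $(\xi v, v)$ in $W_n\subset(W)_{0b}$.

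For the Rouch\'e step, fix such $v$ and set $R:=2(1+|\xi|)\Delta$, which is finite since $\Delta<1$. The equation $h_1(z_0,v)=\xi$ is equivalent to $G(z_0)=0$, where
\[
G(z_0):=(z_0-\xi v)+\bigl(\Delta_1(z_0,v)-\xi\,\Delta_2(z_0,v)\bigr).
\]
For $|v|$ taken sufficiently large, the closed disk $\overline{D}_R=\{|z_0-\xi v|\le R\}$ sits inside $(\mathcal{W}_{\sigma_n,R_n})_0$ (the argument perturbation bounded by $\arcsin(R/|\xi v|)$, the real-part loss by $R$), so $(z_0,v)\in W_n$ for every $z_0\in\overline{D}_R$; thus $G$ is holomorphic there and Remark~\ref{rem: bounds on Delta in S} gives $|\Delta_i(z_0,v)|<\Delta$ throughout. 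On $\partial D_R$ the linear part has modulus $R$, strictly dominating the perturbation $|\Delta_1-\xi\Delta_2|\le(1+|\xi|)\Delta<R$, so by Rouch\'e there is a unique zero $z_0^\ast\in D_R$; this is the required preimage in $(W)_{0b}$.

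The main obstacle is the coupling between the angular width $\arctan\sigma_n$ and the radial threshold $R_n=(\delta/2)^{n/2}R_0$ built into $W$: as $\xi$ approaches $\partial U_{m-b}$, the admissible overlap for $\arg v$ shrinks and forces a larger $n$, whose larger $R_n$ in turn forces $|v|$ large enough to fit the Rouch\'e disk inside the narrower sector. This quantitative compatibility is exactly what the choices $\sigma_n\uparrow\tan(\pi/(2m))$ and $R_n=(\delta/2)^{n/2}R_0$ in Proposition~\ref{prop: invariance W} were designed to ensure.
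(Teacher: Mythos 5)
Your proposal is correct but takes a genuinely different technical route from the paper's. The paper proves $U_0\subseteq h_1(W_{00})$ (other cases analogous) by comparing $h_1$ with the rank-1 map $G(z,w)=z/w$ on a small \emph{polydisk} centred at $Q\in W_{00}\cap G^{-1}(c)$ with $|w_0|$ large, and then invokes the two-dimensional Rouch\'e theorem (Theorem~\ref{thm:Rouche 2D}, developed in \cite{BBS}). The observation that $G(W_{00})=U_0$ and that one can choose $Q$ with $|w_0|$ arbitrarily large is asserted ``by the shape of $W$''. You instead freeze the second coordinate $w_0=v$ and run a \emph{one-variable} Rouch\'e argument in the $z_0$-slice for the function $G(z_0)=(z_0-\xi v)+(\Delta_1-\xi\Delta_2)(z_0,v)$ on the disk $\overline{D}_R$, $R=2(1+|\xi|)\Delta$. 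Your dominance bound $|\Delta_1-\xi\Delta_2|\le(1+|\xi|)\Delta<R$ on $\partial D_R$ via Remark~\ref{rem: bounds on Delta in S} is right, and the forward invariance of $W$ (Proposition~\ref{prop: invariance W}) guarantees the $\Delta_i$ are holomorphic and uniformly bounded on the slice, so the 1D Rouch\'e applies cleanly. The price you pay is that the existence of a suitable $v$ is no longer a one-line geometric remark: you must check that the two angular intervals for $\arg v$ (centred at $-\theta$ and $\tfrac{2\pi b}{m}$, each of half-width $\arctan\sigma_n$) overlap for $n$ large with $\mu(n)=(0,b)$ — your computation that the centres are at distance $|\theta+\tfrac{2\pi b}{m}|<\pi/m$ while $2\arctan\sigma_n\uparrow\pi/m$ is correct and is exactly the content the paper hides behind ``the shape of $W$''. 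Your derivation of the $h_2$-inclusion from $h_1h_2=a$ and the fact that $\xi\mapsto a/\xi$ sends $U_{m-b}$ onto $U_{b+1}$ matches the symmetry the paper uses in the proof of Proposition~\ref{prop:limit set for Omega}. Net effect: you trade the external 2D Rouch\'e machinery for a more elementary 1D argument plus an explicit overlap estimate; both are sound.
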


  Before proving Lemma~\ref{lem:limit set for W} let us see how Lemma~\ref{lem:limit set for W} and Proposition~\ref{prop:hyperbolic FC} imply Proposition~\ref{prop:limit set for Omega}.
  
  \begin{proof}[Proof of Proposition~\ref{prop:limit set for Omega}]
  We prove the claim for $h_1$; for $h_2=\frac{a}{h_1}$, it follows by symmetry. 
  Since $\Omega_{ab}\supset W_{ab}$ for any $a, b \in \Z_m$, it follows that $h_1(\Omega_{ab}) \supset h_1(W_{ab})$. So in view of Lemma~\ref{lem:limit set for W}, $h_1(\Omega_{ab}) \supseteq U_j$ for some $j\in \Z_m$.
  By Proposition~\ref{prop:hyperbolic FC}, we have that $h_1(\Omega_{ab})\subseteq U_j$, and so $h_1(\Omega_{ab})=U_j$.

\end{proof}

We now give a version of Rouch\'e's Theorem in $\C^2$ (for a proof see Section 2 in \cite{BBS}). Here $\partial $ denotes the topological boundary, and  $\dist_{\operatorname{spher}}$ denotes the spherical  distance.

\begin{thm}[Rouch\'e' s Theorem in $\C^2$]\label{thm:Rouche 2D}
Let $B\subset \C^2$ be a polydisk, $F,G$ be holomorphic maps defined in a neighborhood of $\ov{B}$ which take values in $\hat{\C}$.  Let $c\in G(B)$, let $\epsilon= \dist_{\operatorname{spher}}(c, G(\partial B))>0$ and assume 
 $$
 \dist_{\operatorname{spher}}( F,G)<\epsilon \text{ on $\partial B$}.   
 $$
 Then $c\in F(B)$. 
\end{thm}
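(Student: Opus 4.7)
The plan is to reduce the two-dimensional statement to the classical one-dimensional spherical Rouch\'e theorem by slicing $B$ along a complex line through a preimage of $c$ under $G$. The key point is that the conclusion is purely existential: producing one preimage of $c$ under $F$ on any one-complex-dimensional slice of $B$ is enough, and on such a slice the hypotheses on $\partial B$ transfer to the slice's circular boundary.

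Concretely, write the polydisk as $B = B_1 \times B_2$ with $B_1, B_2 \subset \C$ open disks, and use $c \in G(B)$ to pick $(z_0, w_0) \in B$ with $G(z_0, w_0) = c$. Consider the slice $L := B_1 \times \{w_0\}$ and restrict: set $f(z) := F(z, w_0)$ and $g(z) := G(z, w_0)$, defined in a neighborhood of $\ov{B_1}$ with values in $\hat{\C}$. The topological boundary $\partial L = \partial B_1 \times \{w_0\}$ lies inside $\partial B_1 \times \ov{B_2} \subset \partial B$, so both hypotheses transfer directly: $g(\partial B_1) \subset G(\partial B)$ gives $\dist_{\operatorname{spher}}(c, g(z)) \geq \epsilon$ for $z \in \partial B_1$, and $\dist_{\operatorname{spher}}(f(z), g(z)) < \epsilon$ on $\partial B_1$ is immediate. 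Since also $g(z_0) = c$, we have $c \in g(B_1)$.

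Applying the one-dimensional spherical Rouch\'e theorem to $f, g$ on $B_1$ --- which, in this form, says that if $g: \ov{D} \to \hat{\C}$ is meromorphic with $c \in g(D)$, $\epsilon' := \dist_{\operatorname{spher}}(c, g(\partial D)) > 0$, and $\dist_{\operatorname{spher}}(f, g) < \epsilon'$ on $\partial D$, then $c \in f(D)$ --- yields $c \in f(B_1) \subset F(L) \subset F(B)$, as required.

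The main technical step is therefore the one-dimensional spherical Rouch\'e statement itself. I would establish it by post-composing $f$ and $g$ with a suitable M\"obius transformation of $\hat{\C}$ (a spherical isometry) to reduce to the case $c = 0$: the hypothesis then keeps $|g|$ bounded below on $\partial D$, spherical closeness forces $|f|$ to be bounded below and Euclidean-close to $g$ away from any poles, and the standard argument principle applied to $f$ and $g$ --- with the inversion $z \mapsto 1/z$ used to handle local neighborhoods of poles on $\partial D$ --- shows that they have the same winding number around $0$. Since $g(z_0) = 0$ for some $z_0 \in D$ forces that winding number to be positive, $f$ must also have a zero in $D$. The delicate point in the meromorphic setting is handling poles of $f$ or $g$ on $\partial D$ or inside $D$, which is precisely where the spherical formulation (insensitive to whether a value is $0$ or $\infty$) is crucial.
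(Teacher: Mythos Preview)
The paper does not give its own proof of this statement; it simply refers to Section~2 of \cite{BBS}. Your reduction by slicing is correct and is the natural route: with $(z_0,w_0)\in B$ chosen so that $G(z_0,w_0)=c$, the circle $\partial B_1\times\{w_0\}$ lies inside $\partial B$, so both boundary hypotheses restrict verbatim to the slice, and the restriction $g:=G(\cdot,w_0)$ cannot be constant on $B_1$ (otherwise $g\equiv c$ would put $c$ into $G(\partial B)$, contradicting $\epsilon>0$). So the two-dimensional statement does reduce to a one-dimensional spherical Rouch\'e theorem for meromorphic $f,g$ on $\ov{B_1}$.

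The gap is in your justification of that one-dimensional step. You claim that ``$g(z_0)=0$ for some $z_0\in D$ forces that winding number to be positive,'' but for meromorphic $g$ the argument-principle winding number along $\partial D$ equals $Z_g-P_g$ (zeros minus poles inside $D$), and a single interior zero does not make this positive if $g$ also has interior poles; even if you establish $Z_f-P_f=Z_g-P_g$, you cannot conclude $Z_f\ge 1$. Moreover the geodesic homotopy from $g|_{\partial D}$ to $f|_{\partial D}$ lives only in $\hat\C\setminus\{0\}$, not in $\C^*$, so it may cross $\infty$ and need not preserve the $\C^*$-winding number at all. The correct invariant is the Brouwer degree $\deg(g,D,c)$ of $g:\ov D\to\hat\C$, which for a non-constant holomorphic map into $\hat\C$ counts exactly the $c$-points with multiplicity (poles are preimages of $\infty$ and do not contribute, and every local degree is positive); homotopy invariance of this degree --- applied to a homotopy $H:\ov D\times[0,1]\to\hat\C$ from $g$ to $f$ with $c\notin H(\partial D\times[0,1])$ --- then gives $Z_f=Z_g\ge 1$. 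Building such an $H$ and carrying this out carefully is precisely what the cited reference does, and it is what your sketch leaves unaddressed.
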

Note that $F,G$ have generic rank 1: they cannot have rank 2 because the target is $\hat{\C}$, and $G$ cannot be constant otherwise there could not be $c\in G(B)$ with positive distance from $G(\partial B)$. One can check that also $F$ cannot be constant either.

\begin{proof}[Proof of  Lemma~\ref{lem:limit set for W}]
We show that $U_0\subset h_1(W_{00})$, the other cases are analogous. Recall that orbits of points in $W$ are contained in $S$, hence Remark~\ref{rem: bounds on Delta in S} holds. Since
$$
\frac{z_{2n}}{w_{2n}}=\frac{z_0+\Delta_1^n(z_0,w_0)}{w_0+\Delta_2^n(z_0,w_0)} ,
$$
dividing the numerator and the denominator by $w_0$ and using  the fact that $\frac{1}{1+x}=\sum_{j=0}^\infty( -x)^j$ for $|x|<1$, considering $x=\frac{\Delta_2^n(z_0,w_0)}{w_0}$, we obtain
\begin{align*}
    \frac{z_{2n}}{w_{2n}}&=\left(\frac{z_0}{w_0}+\frac{\Delta_1^n(z_0,w_0)}{w_0}\right)\frac{1}{1+\frac{\Delta_2^n(z_0,w_0)}{w_0}}=\left(\frac{z_0}{w_0}+\frac{\Delta_1^n(z_0,w_0)}{w_0}\right)\sum_{j=0}^\infty \left( -\frac{\Delta_2^n(z_0,w_0)}{w_0}\right)^j=\\ &=\left(\frac{z_0}{w_0}+\frac{\Delta_1^n(z_0,w_0)}{w_0}\right)\left(1+\sum_{j=1}^\infty \left( -\frac{\Delta_2^n(z_0,w_0)}{w_0}\right)^j\right)=\\
    &=\frac{z_0}{w_0}+\frac{\Delta_1^n(z_0,w_0)}{w_0}+\left(\frac{z_0}{w_0}+\frac{\Delta_1^n(z_0,w_0)}{w_0} \right)\sum_{j=1}^\infty \left( -\frac{\Delta_2^n(z_0,w_0)}{w_0}\right)^j  .
\end{align*}

That is
\begin{equation}\label{eq: distance h0 h1}
\frac{z_{2n}}{w_{2n}}-\frac{z_{0}}{w_{0}}= \frac{\Delta_1^n(z_0,w_0)}{w_0} +\left( \frac{z_0}{w_0}+ \frac{\Delta_1^n(z_0,w_0)}{w_0}\right)\sum_{j=1}^\infty\left( \frac{-\Delta_2^n(z_0,w_0)}{w_0} \right)^{j} \text{ $\forall n\geq0$.}
\end{equation}
This expression makes sense for $|x|=\left|\frac{-\Delta_2^n(z_0,w_0)}{w_0}     \right|<1$, hence, in view of Remark~\ref{rem: bounds on Delta in S}, for  $|w_0|>1$. Recall also that $|\sum_{j=1}^\infty x^{j}|=\frac{|x|}{1-x}\leq 2|x|$ if $|x|<\frac{1}{2}$.
Let $K\subset\hat\C$  be a compact set and suppose that $\frac{z_0}{w_0}$ takes values in $K$.  By  (\ref{eq: distance h0 h1}) and using Remark~\ref{rem: bounds on Delta in S},  for   any  $\epsilon>0$    there exists $M=M(K,\epsilon)$ such that 
\begin{equation}
\bigg|\frac{z_{2n}}{w_{2n}}-\frac{z_{0}}{w_{0}}\bigg|<\epsilon \text{ for $|w_0|>M$ and $\frac{z_0}{w_0}\in K$.}
\end{equation}
Consider the function  $G(z,w):=\frac{z}{w}$
Observe that
 $$
 G^{-1}(re^{i\theta})=\{(r_1e^{i\theta_1},r_2 e^{i\theta_2})\in\C^2: \frac{r_1}{r_2}=r, \theta=\theta_1-\theta_2 \}.
 $$ 
 Let $c\in U_0$.
 By the shape of $W$ we have  that $G(W_{00})=U_0$, that  $\epsilon:=\frac{1}{2}\dist_{\operatorname{spher}}(c, G(\partial W ))>0$, and that we can choose $Q=(z_0,w_0)\in W_{00} \in G^{-1}(c)$ such that $|w_0|$ is arbitrarily large.  
By taking a limit in $n$ in equation (\ref{eq: distance h0 h1}) and  on a sufficiently small polydisk centered at $Q$ we can ensure  that  $\dist_{\operatorname{spher}}(h_1,G)<\epsilon$, hence the claim follows by Rouch\'e's Theorem.
\end{proof}

The Main Theorem \ref{Main Theorem} stated in the introduction is a direct consequence of Propositions~\ref{prop:existence of FC},
Proposition~\ref{prop:h1 h2}, Proposition~\ref{prop:conjugacy}, Proposition~\ref{prop: four different FC}, Proposition~\ref{prop:Geometric structure} and Proposition~\ref{prop:limit set for Omega}.

%\nocite{*}
\bibliographystyle{amsalpha}
\bibliography{mybibliography}

\end{document}